\documentclass{amsart}%

\usepackage{hyperref}
\usepackage{cite}
\usepackage{amssymb}
\usepackage{amsmath}
\usepackage{amsthm}
\usepackage{amsfonts}
\usepackage{graphicx}
\usepackage{bbm}
\usepackage{color}
\usepackage[toc,page]{appendix}
\usepackage{subfigure}
%
%

\newcommand{\beq}{\begin{equation}}
\newcommand{\eeq}{\end{equation}}
\newcommand{\bea}{\begin{eqnarray}}
\newcommand{\eea}{\end{eqnarray}}
\newcommand{\beas}{\begin{eqnarray*}}
\newcommand{\eeas}{\end{eqnarray*}}

%
%
\newtheorem{theorem}{Theorem}[section]

\newtheorem{assumption}[theorem]{Assumption}
\newtheorem{definition}[theorem]{Definition}
\newtheorem{notation}[theorem]{Notation}
\newtheorem{proposition}[theorem]{Proposition}
\newtheorem{corollary}[theorem]{Corollary}
\newtheorem{lemma}[theorem]{Lemma}
\newtheorem{remark}[theorem]{Remark}
\newtheorem{example}[theorem]{Example}
\newtheorem{examples}[theorem]{Examples}
\newtheorem{foo}[theorem]{Remarks}

\newtheorem*{acknowledgement}{Acknowledgement}
%
%











\begin{document}

\title[Gradient bounds]{Gradient bounds for Kolmogorov type diffusions}

\author[Baudoin]{Fabrice Baudoin{$^{\star}$}}
\thanks{\footnotemark {$\star$} Research was supported in part by NSF Grant DMS-1660031.}
\address{Department of Mathematics\\
University of Connecticut\\
Storrs, CT 06269, U.S.A.} \email{fabrice.baudoin@uconn.edu}

\author[Gordina]{Maria Gordina{$^{\dag \ddag}$}}
\thanks{\footnotemark {$\ddag$} Research was supported in part by the Simons Fellowship.}
\thanks{\footnotemark {$\dag$} Research was supported in part by NSF Grants DMS-1405169, DMS-1712427.}
\address{ Department of Mathematics\\
University of Connecticut\\
Storrs, CT 06269,  U.S.A.}
\email{maria.gordina@uconn.edu}

\author[Mariano]{Phanuel Mariano{$^{\dag}$}}
\address{Department of Mathematics\\
		Purdue University\\
		West Lafayette, IN 47907,  U.S.A.} \email{pmariano@purdue.edu}

\keywords{coupling, hypoelliptic diffusion, Kolmogorov diffusion, curvature-dimension inequality, gradient estimates}
\subjclass{Primary 60J60; Secondary  60J45, 58J65, 35H10}


\begin{abstract}
We study gradient bounds and other functional inequalities for the diffusion semigroup generated by Kolmogorov type operators. The focus is on two different methods: coupling techniques  and generalized $\Gamma$-calculus techniques. The advantages and drawbacks of each of these methods are discussed.
\end{abstract}

\maketitle

\tableofcontents

\renewcommand{\contentsname}{Table of Contents}

\section{Introduction}

In the last few years, there has been considerable interest in studying gradient bounds for semigroups generated by hypoelliptic diffusion operators. The motivation for such bounds comes from their potential applications to sub-Riemannian geometry (e.g. \cite{BaudoinGarofalo2017, BaudoinBonnefontGarofalo2014}), quasi-invariance of heat kernel measures in infinite dimensions (e.g.  \cite{BaudoinGordinaMelcher2013, Gordina2017}),  functional inequalities such as Poincar\'{e} and log-Sobolev type inequalities (e.g.  \cite{DriverMelcher2005, BaudoinBonnefont2012, Kuwada2010a, WangFY2016a}), and  the study of convergence to equilibrium for hypocoercive diffusions (e.g.  \cite{Baudoin2017a, BaudoinTardif2018}). In particular, the gradient bounds we present in this paper might be used to prove the existence of a spectral gap similarly to \cite{BaudoinWang2014a} once one has spectral localization tools. In the present paper we are interested in gradient bounds for Kolmogorov type diffusion operators for which we present and compare two different techniques: $\Gamma$-calculus methods and coupling techniques.

The Kolmogorov operator on $\mathbb{R}^2$ defined as $L=\frac{1}{2}\frac{\partial^2}{\partial x^2}+x\frac{\partial}{\partial y}$  was initially introduced by A. N.~Kolmogorov in \cite{Kolmogorov1934}, where he obtained an explicit expression for the transition density of the diffusion process whose generator is the operator $L$. Later L.~H\"ormander in \cite{Hormander1967a}  used this operator as the simplest example of a hypoelliptic second order differential operator. The semigroup generated by $L$ is Gaussian and thus the corresponding heat kernel may be computed explicitly, as was observed already by A. N.~Kolmogorov. However, despite an explicit Gaussian heat kernel, it is somehow challenging to derive relevant functional inequalities for this semigroup. We refer for instance to R. Hamilton's notes \cite{Hamilton2011}, where Riccati type equations are used to prove Li-Yau and parabolic Harnack inequalities. This (classical) Kolmogorov operator is the starting point for our consideration of several hypoelliptic operators.

As we mention above we present two techniques to prove gradient estimates in this setting.  The first technique is based on a version of the Bakry-\'Emery $\Gamma$-calculus originally introduced in \cite{BakryEmery1985}. Recall that the generator of the Kolmogorov diffusion is hypoelliptic but not elliptic, and therefore  we can not rely on the curvature-dimension inequalities to prove the gradient bounds as presented in \cite[Section 3.2.3]{BakryGentilLedouxBook}. The main novelty in our work is to introduce  a family of operators $\Gamma^{\alpha,\beta}$ and $\Gamma_2^{\alpha,\beta}$ in Notation \ref{notation 1} for which one is able to reproduce the Bakry-\'Emery arguments with some modifications in this degenerate setting. We note that over the last years versions of the $\Gamma$-calculus  have already been used for hypoelliptic operators (e.g. \cite{ BaudoinBonnefont2012, BaudoinBonnefontGarofalo2014, BaudoinGarofalo2017}). However, those references consider hypoelliptic operators satisfying the strong H\"ormander's condition and the methods developed there do not apply to the Kolmogorov operator. While \cite{Baudoin2017a} considers hypoelliptic operators satisfying the weak H\"ormander's  condition, it is mostly  concerned with hypocoercive estimates. The key idea in our work is that to get sharp gradient estimates, we make the parameters $\alpha$ and $\beta$ time-dependent as observed in Remark \ref{time-dependent BE} and the proof of Theorem \ref{Kolmogorov1}.

The second technique is coupling. The coupling techniques have seen recent progress for such degenerate operators. In \cite{BenArousCranstonKendall1995}, the authors were the first to consider couplings of hypoelliptic diffusions, as they prove existence of successful coupling for the Kolmogorov diffusion and Brownian motion on the Heisenberg group. Then in \cite{BanerjeeKendall2016a}, S.~Banerjee and W.~Kendall used a non-Markovian strategy to couple the iterated Kolmogorov diffusion. The most relevant to our results is \cite{BanerjeeGordinaMariano2018}, where coupling techniques have been used to prove gradient estimates on the Heisenberg group considered as a sub-Riemannian manifold.

The paper is organized as follows. We start by considering Kolmogorov diffusions in Section \ref{s.KolmogorovEuclid}, where we use both  generalized $\Gamma$-calculus and coupling techniques to prove gradient estimates such as in Proposition \ref{Kolmogorov1} and Proposition \ref{EuclideanCouplingBakryEmery}. This setting provides the first illustration to contrast these two methods: while the coupling method is somewhat simpler, and yields a family of gradient estimates, other functional inequalities such as the reverse Poincar\'e and the reverse log-Sobolev inequalities for the corresponding semigroup do not seem to be trackable by coupling techniques. But we can prove these inequalities by using the generalized $\Gamma$-calculus. Moreover, we are able to use only this approach (not the coupling techniques) to obtain sharper gradient bounds for the relativistic diffusion considered in Section \ref{s.RelativDiff}. The relativistic diffusion has been introduced by R. Dudley and studied extensively in \cite{Bailleul2008, Angst2015, Dudley1966, Dudley1967, Dudley1973a, DunkelHanggi2009, FranchiLeJan2007, FranchiLeJanBook2012, IkedaMatsumoto2015, JansonsMetcalfe2007, McKean1963a}. We refer the reader to\cite{DunkelPhD2008} for the history of related objects both in mathematics and physics.

In Section \ref{5.2} we use the coupling by parallel translation on Riemannian manifolds. The coupling can be described by a central limit theorem argument for the geodesic random walks as in \cite{vonRenesse2004a}.  It would be interesting to see if such a coupling can be carried out on sub-Riemannian manifolds using the approximation of Brownian motion by random walks as studied in \cite{BoscainNeelRizzi2017, AgrachevBoscainNeelRizzi2018, GordinaLaetsch2017}. If such a coupling can be constructed, then our results and techniques would be valid for an even larger class of hypoelliptic diffusions.

\section{Kolmogorov diffusion in $\mathbb{R}^d \times \mathbb{R}^d$}\label{s.KolmogorovEuclid}

Our main object in this section is a Kolmogorov diffusion in $\mathbb{R}^d \times \mathbb{R}^d$ defined  by
\[
\mathbf{X}_{t}=\left(B_t,\int_0^tB_s ds\right),
\]
where $B_{t}$ is a Brownian motion in $\mathbb{R}^d$ with the variance $\sigma^2$.

\begin{definition}
Let $f \left( p, \xi\right), p \in \mathbb{R}^d, \xi \in \mathbb{R}^{d}$ be a function on  $\mathbb{R}^d \times \mathbb{R}^{d}$. For $\sigma >0$, the Kolmogorov operator for $f \in C^{2}\left( \mathbb{R}^d \times \mathbb{R}^{d} \right)$ is defined by
\begin{align*}
& \left( Lf \right) \left( p, \xi \right):= \langle p,  \nabla_\xi f \left( p, \xi\right) \rangle+\frac{\sigma^2}{2} \Delta_pf \left( p, \xi \right)= \sum_{j=1}^{d} p_{j}\frac{\partial f}{\partial \xi_{j}}\left( p, \xi \right)+\frac{\sigma^2}{2} \Delta_p f \left( p, \xi \right),
\end{align*}
where $\Delta_p$ is the Laplace operator $\Delta$ on $\mathbb{R}^d$ acting on the variable $p$ and $\nabla_\xi $ is the gradient on $\mathbb{R}^d$ acting on the variable $\xi$.
\end{definition}
Note that for $d=1$ and $\sigma=1$ this is the original Kolmogorov operator. By H\"ormander's theorem in \cite{Hormander1967a}, the operator $L$ is hypoelliptic and generates a Markov process $X_{t}$. It follows then that the process $X_{t}$ admits a smooth transition probability density with respect to the Lebesgue measure.

\subsection{$\Gamma$-calculus}\label{EuclidGamma}

First we use  geometric methods such as generalized $\Gamma$-calculus to prove gradient bounds for the semigroup generated by the Kolmogorov operator $L$. Moreover, we show that  the estimate is sharp. We point out that a generalization of  $\Gamma$-calculus  for the Kolmogorov operator has been carried by F.Y.~Wang in \cite[pp. 300-303]{WangFYBook2014}.  However, our methods are  different and yield optimal results as we explain in Remark \ref{r.SharpGradientEst}.

Recall that the \emph{carr\'e du champ operator} for $L$ is defined by

\[
\Gamma\left( f \right):=\frac{1}{2}Lf^2 -fLf,
\]
where $f$ is from an appropriate space of functions which will be specified later. A straightforward computation shows that
\begin{equation}\label{Gamma}
\Gamma(f)= \frac{1}{2} \sigma^2\| \nabla_p f \|^2,
\end{equation}
where $\nabla_p$ is the standard gradient operator on $\mathbb{R}^d$ acting on the variable $p$, and $\Vert \cdot \Vert$ is the $\mathbb{R}^d$-norm.

\begin{notation}\label{notation 1}
For $\alpha \in \mathbb{R}$, $\beta \geqslant  0$ we define a symmetric first-order differential bilinear form $\Gamma^{\alpha,\beta}: C^{\infty}\left( \mathbb{R}^d \times \mathbb{R}^{d} \right) \times C^{\infty}\left( \mathbb{R}^d \times \mathbb{R}^{d} \right) \rightarrow \mathbb{R}$ by

\begin{align}
& \Gamma^{\alpha, \beta} (f, g) :=\sum_{i=1}^d \left( \frac{\partial f}{\partial p_i } -\alpha \frac{\partial f}{\partial \xi_i } \right)\left( \frac{\partial g}{\partial p_i } -\alpha \frac{\partial g}{\partial \xi_i } \right) +\beta\sum_{i=1}^d  \frac{\partial f}{\partial \xi_i } \frac{\partial g}{\partial \xi_i } \notag
\\
& = \langle \nabla_{p}f, \nabla_{p}g \rangle-\alpha \langle \nabla_{p}f, \nabla_{\xi}g \rangle -\alpha \langle \nabla_{\xi}f, \nabla_{p}g \rangle+\left( \alpha^{2} + \beta \right)\langle \nabla_{\xi}f, \nabla_{\xi}g \rangle, \label{e.GammaAB}
\end{align}
with the usual convention that  $\Gamma^{\alpha,\beta}(f):=\Gamma^{\alpha,\beta} \left( f, f \right)$. We will also consider

\begin{equation*}
\Gamma_2^{\alpha,\beta} (f)=\frac{1}{2} L \Gamma^{\alpha,\beta} (f) -\Gamma^{\alpha,\beta} (f,Lf).
\end{equation*}
\end{notation}
We start with the following key lemma.

\begin{lemma}\label{KeyLem}
For $f \in C^{\infty}\left( \mathbb{R}^d \times \mathbb{R}^{d} \right)$
\begin{align*}
\Gamma_2^{\alpha,\beta}(f)  \geqslant  \alpha \sum_{i=1}^d \left( \frac{\partial f}{\partial \xi_i }\right)^2 -\sum_{i=1}^d \frac{\partial f}{\partial \xi_i } \frac{\partial f}{\partial p_i }=\alpha \Vert \nabla_{\xi}f \Vert^{2}-\langle \nabla_{\xi}f, \nabla_{p}f\rangle.
\end{align*}
\end{lemma}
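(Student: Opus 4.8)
The plan is to compute $\Gamma_2^{\alpha,\beta}(f)$ directly by carrying out the differentiations, and then to identify the resulting expression as a sum of a manifestly nonnegative quadratic form (a sum of squares) plus the claimed lower-bound term. First I would record the explicit action of $L$: since $L = \sum_j p_j \partial_{\xi_j} + \tfrac{\sigma^2}{2}\Delta_p$, the first-order part is a vector field and the second-order part is the Laplacian in $p$. The key structural facts I will use repeatedly are the commutation relations between $\partial_{p_i}$, $\partial_{\xi_i}$ and $L$: one has $[\partial_{\xi_i}, L] = 0$ because $L$ has constant coefficients in $\xi$, while $[\partial_{p_i}, L] = \partial_{\xi_i}$ because differentiating the drift term $\sum_j p_j \partial_{\xi_j}$ in $p_i$ produces $\partial_{\xi_i}$. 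These two identities are what make the computation tractable.

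Next I would expand $\Gamma_2^{\alpha,\beta}(f) = \tfrac12 L\Gamma^{\alpha,\beta}(f) - \Gamma^{\alpha,\beta}(f,Lf)$ using the bilinear expression \eqref{e.GammaAB}. Writing $\Gamma^{\alpha,\beta}(f) = \sum_i (Z_i f)^2 + \beta \sum_i (\partial_{\xi_i} f)^2$ with $Z_i := \partial_{p_i} - \alpha \partial_{\xi_i}$, the standard Bakry–Émery bookkeeping gives
\[
\Gamma_2^{\alpha,\beta}(f) = \sum_i \Gamma\big(Z_i f\big) \cdot \tfrac{2}{\sigma^2}\cdot\tfrac{\sigma^2}{2} + \beta \sum_i \Gamma\big(\partial_{\xi_i} f\big)\cdot(\cdots) + \sum_i (Z_i f)\,[Z_i, L]f + \beta\sum_i (\partial_{\xi_i} f)\,[\partial_{\xi_i}, L]f,
\]
so the precise statement I actually want is: the ``$\Gamma_2$ of a square'' contributes $\sum_i \Gamma(Z_i f) + \beta \sum_i \Gamma(\partial_{\xi_i} f)$, which by \eqref{Gamma} equals $\tfrac{\sigma^2}{2}\big(\sum_{i,j}(\partial_{p_j} Z_i f)^2 + \beta \sum_{i,j}(\partial_{p_j}\partial_{\xi_i} f)^2\big) \ge 0$, and is therefore discardable for the inequality; the genuinely relevant terms are the commutator terms $\sum_i (Z_i f)[L, Z_i]f + \beta \sum_i (\partial_{\xi_i} f)[L,\partial_{\xi_i}]f$. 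Using $[L,\partial_{\xi_i}] = 0$ kills the $\beta$-term entirely, and $[L, Z_i] = [L,\partial_{p_i}] = -\partial_{\xi_i}$ gives $\sum_i (Z_i f)(-\partial_{\xi_i} f) = -\sum_i (\partial_{p_i} f - \alpha \partial_{\xi_i} f)\partial_{\xi_i} f = \alpha \sum_i (\partial_{\xi_i} f)^2 - \sum_i (\partial_{\xi_i} f)(\partial_{p_i} f)$. This is exactly the claimed right-hand side, and since the sum-of-squares piece is nonnegative, the inequality follows.

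The main obstacle — really the only place where care is needed — is getting the Bakry–Émery identity right in this non-symmetric, degenerate setting: $\Gamma^{\alpha,\beta}$ is not the carré du champ of $L$ (that role is played by $\Gamma$ in \eqref{Gamma}), so the usual identity $\Gamma_2(g) = \sum \Gamma(\partial_j g) + \text{curvature}$ does not apply verbatim. I would instead verify by a direct (if slightly tedious) computation that for any first-order operators $Y, Z$ with constant coefficients,
\[
\tfrac12 L\big((Yf)(Zf)\big) - \tfrac12\big(Y f \cdot Z(Lf) + Z f \cdot Y(Lf)\big) = \tfrac{\sigma^2}{2}\sum_j (\partial_{p_j} Y f)(\partial_{p_j} Z f) + \tfrac12\big((Yf)[L,Z]f + (Zf)[L,Y]f\big),
\]
which is what produces the decomposition above once I sum over the pieces of $\Gamma^{\alpha,\beta}$; the second-derivative cross terms all assemble into the nonnegative $\sigma^2$-weighted sum precisely because $L$'s top-order part is $\tfrac{\sigma^2}{2}\Delta_p$. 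Everything else is bookkeeping. I expect the write-up to consist of stating these two commutator identities, invoking the above product formula, and then reading off the three resulting groups of terms.
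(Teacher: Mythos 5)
Your proof is correct and takes essentially the same route as the paper: the paper's argument is precisely the identity
$\Gamma_2^{\alpha,\beta}(f)=\alpha\Vert\nabla_\xi f\Vert^{2}-\langle\nabla_\xi f,\nabla_p f\rangle+\frac{\sigma^{2}}{2}\sum_{i,j}\bigl(\partial^{2}_{p_i p_j}f-\alpha\,\partial^{2}_{p_i\xi_j}f\bigr)^{2}+\frac{\sigma^{2}}{2}\beta\sum_{i,j}\bigl(\partial^{2}_{p_i\xi_j}f\bigr)^{2}$,
stated as the result of a direct computation, after which the two nonnegative sums of squares are discarded. Your commutator bookkeeping ($[\partial_{\xi_i},L]=0$, $[\partial_{p_i},L]=\partial_{\xi_i}$, together with the product formula for $L$ applied to the fields $Z_i=\partial_{p_i}-\alpha\partial_{\xi_i}$ and $\partial_{\xi_i}$) is simply a more structured way of organizing that same computation and reproduces exactly this identity.
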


\begin{proof}
Let $\alpha\in\mathbb{R}, \beta\geqslant0$. A computation shows that
\begin{align*}
\Gamma_{2}^{\alpha,\beta}(f) & =\alpha\sum_{i=1}^{d}\left(\frac{\partial f}{\partial\xi_{i}}\right)^{2}-\sum_{i=1}^{d}\frac{\partial f}{\partial\xi_{i}}\frac{\partial f}{\partial p_{i}}\\
 & +\frac{\sigma^{2}}{2}\sum_{i=1}^{d}\sum_{j=1}^{d}\left(\frac{\partial^{2}f}{\partial p_{i}\partial p_{j}}-\alpha\frac{\partial^{2}f}{\partial p_{i}\partial\xi_{j}}\right)^{2}\\
 & +\frac{\sigma^{2}}{2}\beta\sum_{i=1}^{d}\sum_{j=1}^{d}\left(\frac{\partial^{2}f}{\partial p_{i}\partial\xi_{j}}\right)^{2}\\
 & \geqslant\alpha\sum_{i=1}^{d}\left(\frac{\partial f}{\partial\xi_{i}}\right)^{2}-\sum_{i=1}^{d}\frac{\partial f}{\partial\xi_{i}}\frac{\partial f}{\partial p_{i}}.
\end{align*}
\end{proof}

We are now in position to prove regularization properties for the semigroup $P_t=e^{tL}$. But first we have the following remark that will make the proofs in this section easier to read.

\begin{remark}\label{time-dependent BE} We will repeatedly use the following simple computation.
Suppose $\alpha\left( s\right),\beta\left( s\right) \in C^{1}\left( [0, \infty )\right)$. Then for $f \in C^{\infty}\left( \mathbb{R}^d \times \mathbb{R}^{d} \right)$

\begin{align}
& \phi^{\prime}\left( s \right)=2P_{s}\left(\Gamma_2^{\alpha(s),\beta(s)} \left( P_{t-s} f \right) \right)
\label{e.3.1}
\\
& -2\alpha^{\prime}\left( s \right)P_{s}\langle \nabla_{p}P_{t-s} f , \nabla_{\xi}P_{t-s} f \rangle +\left( 2 \alpha^{\prime}\left( s \right)\alpha\left( s \right)+\beta^{\prime}\left( s \right)\right)P_{s}\Vert \nabla_{\xi}P_{t-s} f\Vert^{2},
\notag
\end{align}
where $\phi$ is the functional
\[
\phi\left( s \right):=P_s \left( \Gamma^{\alpha (s), \beta\left( s\right) } \left( P_{t-s} f \right) \right), \quad 0 \leqslant  s \leqslant  t,
\]
\end{remark}

\begin{proposition}[Bakry-\'Emery type estimate] \label{Kolmogorov1}
Let $f \in C^1(\mathbb{R}^d \times \mathbb{R}^{d})$ be a globally Lipschitz function, then one has
\[
\| \nabla_p P_t f \|^2 \leqslant  \sum_{i=1}^d  P_t \left( \frac{\partial f}{\partial p_i } +t \frac{\partial f}{\partial \xi_i } \right)^2,
\]
and
\[
\| \nabla_\xi P_t f \|^2 \leqslant  P_t \|  \nabla_\xi f \|^2.
\]
\end{proposition}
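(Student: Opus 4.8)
The plan is to apply the interpolation method of Bakry and Émery along the semigroup, but with time-dependent parameters $\alpha(s),\beta(s)$ chosen so that the troublesome lower-order terms in $\Gamma_2^{\alpha,\beta}$ are exactly absorbed. Fix $t>0$, fix $f$ smooth and Lipschitz (the general case follows by a routine approximation argument), and consider the functional $\phi(s)=P_s\bigl(\Gamma^{\alpha(s),\beta(s)}(P_{t-s}f)\bigr)$ on $[0,t]$ from Remark~\ref{time-dependent BE}. Differentiating gives \eqref{e.3.1}, and by Lemma~\ref{KeyLem} we have the pointwise bound
\[
\Gamma_2^{\alpha(s),\beta(s)}(P_{t-s}f)\geqslant \alpha(s)\,\Vert\nabla_\xi P_{t-s}f\Vert^2-\langle\nabla_\xi P_{t-s}f,\nabla_p P_{t-s}f\rangle .
\]
Substituting this into $\phi'(s)$ and collecting the $P_s\langle\nabla_p P_{t-s}f,\nabla_\xi P_{t-s}f\rangle$ and $P_s\Vert\nabla_\xi P_{t-s}f\Vert^2$ terms, one finds that $\phi'(s)\geqslant 0$ provided the ODE system
\[
\alpha'(s)=-1,\qquad 2\alpha'(s)\alpha(s)+\beta'(s)+2\alpha(s)=0
\]
holds. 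The first equation forces $\alpha(s)=c-s$ for a constant $c$, and the second then reads $\beta'(s)=2(c-s)-2\alpha'(s)\alpha(s)=\ldots$, which integrates to an explicit $\beta(s)$; one checks $\beta(s)\geqslant 0$ on $[0,t]$ can be arranged by the choice of $c$ together with a free additive constant in $\beta$. With these choices $\phi$ is nondecreasing, so $\phi(0)\leqslant\phi(t)$.

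For the first inequality I would take $c=t$, so $\alpha(s)=t-s$, giving $\alpha(t)=0$ and $\alpha(0)=t$. The terminal value is $\phi(t)=\Gamma^{0,\beta(t)}(f)=\Vert\nabla_p f\Vert^2+\beta(t)\Vert\nabla_\xi f\Vert^2$ — wait, I actually want $\phi(t)$ to produce the right-hand side $\sum_i P_t(\partial_{p_i}f+t\,\partial_{\xi_i}f)^2$, so the correct bookkeeping is to read off $\phi(0)=P_t\bigl(\Gamma^{t,\beta(0)}(f)\bigr)$ as the quantity controlling $\Vert\nabla_p P_t f\Vert^2$ and choose the constants so that $\beta(0)=0$ and $\beta(t)$ cancels appropriately; concretely, $\Gamma^{t,0}(f)=\sum_i(\partial_{p_i}f-t\,\partial_{\xi_i}f)^2$, and since the sign of $\alpha$ is immaterial after squaring one obtains $\Vert\nabla_p P_t f\Vert^2\leqslant\Gamma^{t,0}(P_t f)\leqslant\phi(t)$ evaluated suitably, ending at $\sum_i P_t(\partial_{p_i}f+t\,\partial_{\xi_i}f)^2$ once one tracks that $\Gamma^{\alpha,\beta}(g)\geqslant\Vert\nabla_p g-\alpha\nabla_\xi g\Vert^2$ and at $s=0$ the heat operator has not yet acted. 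I would present this by first establishing the monotonicity of $\phi$, then simply evaluating at the two endpoints with the explicit $\alpha,\beta$.

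The second inequality is the easy one: take $\alpha\equiv 0$ and $\beta\equiv\beta_0$ constant, so that $\Gamma^{0,\beta_0}(g)=\Vert\nabla_p g\Vert^2+\beta_0\Vert\nabla_\xi g\Vert^2$. Then the extra terms in \eqref{e.3.1} vanish and $\phi'(s)=2P_s\bigl(\Gamma_2^{0,\beta_0}(P_{t-s}f)\bigr)$, which by the explicit formula in the proof of Lemma~\ref{KeyLem} with $\alpha=0$ equals $\sigma^2 P_s\bigl(\sum_{i,j}(\partial_{p_i p_j}^2 P_{t-s}f)^2+\beta_0\sum_{i,j}(\partial^2_{p_i\xi_j}P_{t-s}f)^2\bigr)\geqslant 0$; hence $\phi$ is nondecreasing, $\phi(0)\leqslant\phi(t)$, i.e. $P_t(\Vert\nabla_p f\Vert^2+\beta_0\Vert\nabla_\xi f\Vert^2)\geqslant\Vert\nabla_p P_t f\Vert^2+\beta_0\Vert\nabla_\xi P_t f\Vert^2$, and letting $\beta_0\to\infty$ isolates $\Vert\nabla_\xi P_t f\Vert^2\leqslant P_t\Vert\nabla_\xi f\Vert^2$. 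The main obstacle is entirely in the first inequality, and it is bookkeeping rather than conceptual: one must choose the two free constants in the ODE solution so that simultaneously (i) $\beta(s)\geqslant 0$ throughout $[0,t]$ so that $\Gamma^{\alpha(s),\beta(s)}$ remains a bona fide nonnegative form and Lemma~\ref{KeyLem} applies, and (ii) the boundary terms at $s=0$ and $s=t$ collapse to exactly $\Vert\nabla_p P_t f\Vert^2$ and $\sum_i P_t(\partial_{p_i}f+t\,\partial_{\xi_i}f)^2$ respectively. A secondary technical point is justifying the differentiation under $P_s$ and the finiteness of all the gradient quantities for merely Lipschitz $f$, which is handled by first proving everything for $f\in C_0^\infty$ and then approximating, using that $P_t$ is given by a smooth Gaussian kernel.
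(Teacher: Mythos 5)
Your overall strategy is the paper's: interpolate with $\phi(s)=P_s\bigl(\Gamma^{\alpha(s),\beta(s)}(P_{t-s}f)\bigr)$, use \eqref{e.3.1} and Lemma \ref{KeyLem}, and force the lower-order terms to cancel, which correctly gives $\alpha'(s)=-1$ and $\beta$ constant. But the endpoint bookkeeping for the first inequality is wrong as written. With your choice $\alpha(s)=t-s$ you get $\phi(0)=\Gamma^{t,\beta}(P_tf)$ and $\phi(t)=P_t\bigl(\Gamma^{0,\beta}(f)\bigr)$ (note $\phi(0)$ carries no outer $P_t$, contrary to what you "read off"), so monotonicity yields $\sum_i\bigl(\partial_{p_i}P_tf-t\,\partial_{\xi_i}P_tf\bigr)^2\leqslant P_t\Vert\nabla_p f\Vert^2$ — a different (reverse-type) inequality, not the claimed one. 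The patch "the sign of $\alpha$ is immaterial after squaring" is false: $\Gamma^{t,0}$ and $\Gamma^{-t,0}$ differ in the sign of the cross term, and only $\Gamma^{-t,0}(f)=\sum_i(\partial_{p_i}f+t\,\partial_{\xi_i}f)^2$ matches the right-hand side. The correct constant is $c=0$, i.e.\ $\alpha(s)=-s$, $\beta\equiv 0$ (the paper's choice), giving $\phi(0)=\Vert\nabla_pP_tf\Vert^2$ and $\phi(t)=P_t\sum_i(\partial_{p_i}f+t\,\partial_{\xi_i}f)^2$. Note Lemma \ref{KeyLem} allows negative $\alpha$, so nothing is lost.

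The second inequality is where the real failure lies. With $\alpha\equiv 0$ and $\beta\equiv\beta_0$ constant, $\Gamma_2^{0,\beta_0}(g)$ is \emph{not} the sum of squares you wrote: the displayed formula in the proof of Lemma \ref{KeyLem} shows the first-order contribution $\alpha\Vert\nabla_\xi g\Vert^2-\langle\nabla_\xi g,\nabla_p g\rangle$, and at $\alpha=0$ the indefinite term $-\langle\nabla_\xi g,\nabla_p g\rangle$ survives (it comes from the commutator $[\partial_{p_i},L]=\partial_{\xi_i}$). So $\phi$ need not be monotone, and indeed your intermediate inequality $\Vert\nabla_pP_tf\Vert^2+\beta_0\Vert\nabla_\xi P_tf\Vert^2\leqslant P_t\bigl(\Vert\nabla_pf\Vert^2+\beta_0\Vert\nabla_\xi f\Vert^2\bigr)$ is false: for $f(p,\xi)=l(\xi)$ linear one has $P_tf=l(\xi)+t\,l(p)$ (see Remark \ref{r.SharpGradientEst}), so the left side is $(t^2+\beta_0)\Vert l\Vert^2$ while the right side is $\beta_0\Vert l\Vert^2$. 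This is precisely why the time-dependent $\alpha$ is needed even for the $\xi$-gradient. The repair within your framework: keep $\alpha(s)=-s$ with an arbitrary constant $\beta\geqslant 0$, obtain $\Gamma^{0,\beta}(P_tf)\leqslant P_t\bigl(\Gamma^{-t,\beta}(f)\bigr)$, divide by $\beta$ and let $\beta\to\infty$; alternatively, observe that $\partial_{\xi_i}$ commutes with $L$, hence $\nabla_\xi P_tf=P_t\nabla_\xi f$, and conclude by Jensen's inequality. Your approximation step for merely Lipschitz $C^1$ data is in line with the paper's.
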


\begin{proof}

Let $t>0$. We first assume that $f$ is smooth and rapidly decreasing. In that case, the following computations are easily justified since $P_t$ has a Gaussian kernel (see \cite[pp. 80-85]{CalinChangFurutaniIwasakiBook}). We consider then (at a given fixed point $(\xi, p)$) the functional
\[
\phi (s)=P_s ( \Gamma^{\alpha (s), \beta } (P_{t-s} f)), \quad 0 \leqslant  s \leqslant  t,
\]
where $\alpha(s)=-s$ and $\beta$ is a non-negative constant. Then by \eqref{e.3.1} and Lemma \ref{KeyLem}

\begin{align*}
\phi^{\prime} (s) & =2 P_s \left( \Gamma_2^{\alpha (s), \beta } \left( P_{t-s} f \right)+ \langle \nabla_{p}\left( P_{t-s} f \right), \nabla_{\xi}\left( P_{t-s} f \right) f \rangle + s\Vert \nabla_{\xi}\left( P_{t-s} f \right) \Vert^{2}\right)
\\
& \geqslant 2 P_s\left(\alpha (s) \Vert \nabla_{\xi}(P_{t-s} f) \Vert^{2}-\langle \nabla_{\xi}(P_{t-s} f), \nabla_{p}(P_{t-s} f)\rangle\right)
\\
& + \langle \nabla_{\xi}(P_{t-s} f), \nabla_{p}(P_{t-s} f)\rangle + s  \|  \nabla_\xi P_{t-s} f \|^2=0.
\end{align*}
Thus $\phi$ is increasing, and therefore $\phi(0) \leqslant  \phi(t)$, that is,
\[
\Gamma^{\alpha (0), \beta } (P_{t} f) \leqslant  P_t ( \Gamma^{\alpha (t), \beta } (f)).
\]
The result follows immediately by taking $\beta=0$.  Now, if $f \in C^1(\mathbb{R}^d \times \mathbb{R}^{d})$ is a Lipschitz function, then for any $s>0$, the function $P_s f$ is smooth and rapidly decreasing (again, since $P_s$ has a Gaussian kernel). Therefore, applying the inequality we have proved to $P_sf$ yields
\[
\Vert \nabla_p P_{t+s} f \Vert^2 \leqslant  \sum_{i=1}^d  P_t \left( \frac{\partial P_sf}{\partial p_i } +t \frac{\partial P_s f}{\partial \xi_i } \right)^2.
\]
Letting $s \to 0$ concludes the argument. To justify this limit one can first observe that since $f$ is Lipschitz then $P_s f\to f$ as $s\to 0$. Then one can also show that $P_sf$ is dominated by $g_s(p,\xi)=c_1\left( \left|p\right| +\left|\xi\right|+\sqrt{s} \right) +c_2$ for $0<s<1$ since $f$ is Lipschitz. A dominated convergence argument finishes the proof.
\end{proof}

\begin{remark}[Bakry-\'Emery type estimate is sharp]\label{r.SharpGradientEst}
Suppose $l$ is any linear form on $\mathbb{R}^d$, we define the function $f(p,\xi):=l(\xi)$. Note that $f$ is Lipschitz since $f$ is linear . Then for every $(p,\xi) \in \mathbb{R}^d \times \mathbb{R}^d$ and $t \geqslant  0$  we have
\[
P_t f (p, \xi) =\mathbb{E} \left( f\left(B_t+p, \xi +t p +\int_0^tB_s ds \right) \right)=l (\xi)+ t l(p).
\]
For this choice of $f$, one has $\| \nabla_p P_t f \|^2 =t^2 \| l \|^2$ and

\[
\sum_{i=1}^d  P_t \left( \frac{\partial f}{\partial p_i } +t \frac{\partial f}{\partial \xi_i } \right)^2=t^2 \| l \|^2.
\]
Similarly, for this choice of $f$,  $\| \nabla_\xi P_t f \|^2 = P_t \|  \nabla_\xi f \|^2$. So the bounds in Proposition \ref{Kolmogorov1} are sharp.
\end{remark}

\begin{proposition}[Reverse Poincar\'e inequality]
Let $f \in C^1(\mathbb{R}^d \times \mathbb{R}^{d})$ be a bounded function, then for $t>0$
\[
\sum_{i=1}^d \left( \frac{\partial P_t f}{\partial p_i } -\frac{1}{2} t \frac{\partial P_t f}{\partial \xi_i } \right)^2 +\frac{t^2}{12} \left( \frac{\partial P_t f}{\partial \xi_i }\right)^2 \leqslant  \frac{1}{\sigma^2 t} (P_tf^2 -(P_tf)^2).
\]
\end{proposition}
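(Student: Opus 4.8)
The plan is to run the same time-dependent $\Gamma$-calculus interpolation as in Proposition \ref{Kolmogorov1}, but now integrate the differential inequality for $\phi$ over $[0,t]$ rather than merely using monotonicity. Fix a point $(p,\xi)$ and $t>0$, and set
\[
\phi(s) = P_s\left(\Gamma^{\alpha(s),\beta(s)}(P_{t-s}f)\right), \qquad 0 \leqslant s \leqslant t.
\]
The key is to choose $\alpha(s)$ and $\beta(s)$ so that two things happen simultaneously: first, the right-hand side collects into $P_s\|\nabla_\xi P_{t-s}f\|^2$ and $P_s\langle\nabla_p P_{t-s}f,\nabla_\xi P_{t-s}f\rangle$ terms that combine (via the $\alpha'$, $\beta'$ terms in \eqref{e.3.1} and the Lemma \ref{KeyLem} lower bound on $\Gamma_2$) into a clean lower bound for $\phi'(s)$; second, $\phi(t)$ equals the target quadratic form $\sum_i\left(\frac{\partial P_tf}{\partial p_i} - \frac{t}{2}\frac{\partial P_tf}{\partial \xi_i}\right)^2 + \frac{t^2}{12}\left(\frac{\partial P_tf}{\partial \xi_i}\right)^2$, which forces $\alpha(t) = t/2$ and $\beta(t) = t^2/12 - t^2/4 = -t^2/6$... so in fact one should run $s\mapsto\phi(t-s)$ or reverse the roles, choosing $\alpha(0) = t/2$, $\beta(0) = t^2/12$ at the evaluation end. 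I would set $\alpha(s) = (t-s)/2$ and pick $\beta(s)$ to be the quadratic in $(t-s)$ making the semigroup-side terms telescope; then $\phi(0) = \Gamma^{t/2,\,t^2/12}(P_tf)$ is exactly the left-hand side.

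The main steps, in order. First, apply Lemma \ref{KeyLem} and Remark \ref{time-dependent BE} to get
\[
\phi'(s) \geqslant 2\alpha(s)P_s\|\nabla_\xi P_{t-s}f\|^2 - 2P_s\langle\nabla_p,\nabla_\xi\rangle - 2\alpha'(s)P_s\langle\nabla_p,\nabla_\xi\rangle + (2\alpha'\alpha + \beta')P_s\|\nabla_\xi\|^2,
\]
writing $\nabla$ for $\nabla(P_{t-s}f)$. Second, choose $\alpha(s) = \frac{t-s}{2}$ so that $\alpha'(s) = -\frac12$ and the two cross terms cancel: $-2P_s\langle\nabla_p,\nabla_\xi\rangle - 2(-\frac12)P_s\langle\nabla_p,\nabla_\xi\rangle = -P_s\langle\nabla_p,\nabla_\xi\rangle$, which is not zero, so one must instead keep track of it and bound $|\langle\nabla_p,\nabla_\xi\rangle|$ by the $\Gamma_2$ square terms one threw away (the $\frac{\sigma^2}{2}(\cdots)^2$ terms in Lemma \ref{KeyLem}) — this is the real content. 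So third, I would not discard the full-square terms in the proof of Lemma \ref{KeyLem} but retain $\frac{\sigma^2}{2}\sum_{i,j}\left(\frac{\partial^2 f}{\partial p_i\partial p_j} - \alpha\frac{\partial^2 f}{\partial p_i\partial\xi_j}\right)^2$ and use it, together with a Cauchy–Schwarz/completing-the-square estimate, to dominate the leftover first-order cross term, producing a genuine inequality $\phi'(s) \geqslant 0$ or $\phi'(s) \geqslant (\text{something integrable})$. Fourth, choose $\beta(s)$ (a quadratic polynomial in $t-s$ with $\beta(s)\geqslant 0$ on $[0,t]$, e.g. $\beta(s) = \frac{(t-s)^2}{12}$) so that $2\alpha'\alpha + \beta' = -\frac{t-s}{2} - \frac{t-s}{6}$ combines with $2\alpha(s) = t-s$ to leave a nonnegative coefficient on $P_s\|\nabla_\xi\|^2$. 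Fifth, integrate $\phi' \geqslant c\,P_s(\Gamma(P_{t-s}f)) = \frac{c\sigma^2}{2}P_s\|\nabla_p P_{t-s}f\|^2$-type lower bound — actually the cleanest route: show $\phi'(s) \geqslant \frac{2}{\sigma^2 t}\cdot(\text{const})\cdot\Gamma$-free lower bound is not available, so instead show directly $\phi'(s) \geqslant$ a quantity whose integral gives $\frac{1}{\sigma^2 t}$. The standard trick for reverse Poincaré: show $\phi$ is convex or that $\phi'(s) \geqslant \kappa \cdot \frac{d}{ds}\big(\text{something}\big)$; more precisely one shows $\frac{d}{ds}\phi(s) \geqslant \frac{1}{t}\cdot\frac{2}{\sigma^2}\cdot\big(P_s(P_{t-s}f)^2 - \text{lower order}\big)$ — no. The correct classical mechanism: the function $s\mapsto P_s\big((P_{t-s}f)^2\big)$ has derivative $2P_s(\Gamma(P_{t-s}f)) = \sigma^2 P_s\|\nabla_p P_{t-s}f\|^2 \geqslant 0$ and, because $\Gamma^{\alpha,\beta}(g) \geqslant \Gamma(g) = \frac{\sigma^2}{2}\|\nabla_p g\|^2$ whenever $\beta\geqslant 0$... this is false in general, so one needs $\Gamma^{\alpha,\beta} \geqslant$ (a multiple of) $\Gamma$ only in the $p$-direction.

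Here is the mechanism I would actually use. Introduce $\psi(s) = P_s\big((P_{t-s}f)^2\big)$, so $\psi'(s) = \sigma^2 P_s\|\nabla_p P_{t-s}f\|^2$ and $\psi(t) - \psi(0) = P_tf^2 - (P_tf)^2$. The claim will follow if I can show $\phi(s) \leqslant \frac{\text{const}}{\sigma^2}\cdot\psi'(s)$ fails pointwise, so instead I integrate: from $\phi'(s)\geqslant 0$ (monotonicity, once the cross terms are absorbed) one gets $\phi(0)\leqslant\phi(t)$, but $\phi(t) = \Gamma^{0,0}(P_tf) = \|\nabla_p P_tf\|^2 = \frac{1}{\sigma^2}\psi'(t)$ if I take $\alpha(t)=\beta(t)=0$ — but then $\alpha(0), \beta(0)$ are whatever the chosen functions force, and that is not the reverse-Poincaré shape. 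The resolution, which is the genuinely right one: run the interpolation with the \emph{stronger} lower bound $\phi'(s) \geqslant \sigma^2 c(s) P_s\|\nabla_p P_{t-s}f\|^2 = c(s)\psi'(s)$ for an explicit $c(s)\geqslant 0$ — obtained by keeping the $\frac{\sigma^2}{2}\sum(\partial^2_{pp} - \alpha\partial^2_{p\xi})^2$ term and the $\frac{\sigma^2\beta}{2}\sum(\partial^2_{p\xi})^2$ term from Lemma \ref{KeyLem} and estimating them below by $\sigma^2 c(s)\|\nabla_p\|^2$ via an appropriate differential manipulation (integration by parts against the heat semigroup is \emph{not} what is used; rather one uses that along the interpolation $\frac{d}{ds}P_s\|\nabla_p P_{t-s}f\|^2$ is controlled by those Hessian squares). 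Then integrating $\phi'(s)\geqslant c(s)\psi'(s)$ and using $\psi' \geqslant 0$ together with $\psi'$ monotone-ish gives $\phi(0) \leqslant \phi(t) + \big(\sup_s\text{adjustments}\big)$, and a judicious choice of $c(s)$ linear in $s$ produces exactly the factor $\frac{1}{\sigma^2 t}$ and the coefficients $-t/2$, $t^2/12$ upon setting $\alpha(0)=t/2$, $\beta(0)=t^2/12$, $\alpha(t)=\beta(t)=0$. The main obstacle, and where all the real work lies, is precisely the bookkeeping in this last step: choosing the triple $(\alpha(s),\beta(s),c(s))$ — I expect $\alpha(s) = \frac{t-s}{2}$, $\beta(s) = \frac{(t-s)^2}{12}$, and $c(s)$ affine — so that (i) $\beta(s)\geqslant 0$ on $[0,t]$, (ii) the retained Hessian-square terms in $\Gamma_2^{\alpha,\beta}$ dominate the leftover first-order cross term $-2\alpha'\langle\nabla_p,\nabla_\xi\rangle$ plus enough to leave $c(s)\sigma^2\|\nabla_p\|^2$, and (iii) the endpoint values match the asserted inequality. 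I would verify (ii) by completing the square in the Hessian entries, treating $\frac{\partial^2 f}{\partial p_i\partial\xi_j}$ as the variable and the first-order derivatives as fixed, exactly as one does in the elliptic Bochner argument, and checking the resulting discriminant condition is met by the above choices. Everything else — the approximation by smooth rapidly decreasing $f$ and the passage to bounded $f$ — is handled verbatim as in Proposition \ref{Kolmogorov1}, using the Gaussian kernel of $P_t$.
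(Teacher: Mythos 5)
You have the right interpolation family and even the right parameter functions $\alpha(s)=\frac{t-s}{2}$, $\beta(s)=\frac{(t-s)^2}{12}$, but the argument as proposed has a genuine gap: the device that closes the proof in the paper is a prefactor $(t-s)$, i.e.\ one works with
\[
\phi(s)=(t-s)\,P_s\left(\Gamma^{\alpha(s),\beta(s)}(P_{t-s}f)\right),
\]
not with $P_s\left(\Gamma^{\alpha(s),\beta(s)}(P_{t-s}f)\right)$ alone. Differentiating the prefactor contributes the extra term $-P_s\left(\Gamma^{\alpha(s),\beta(s)}(P_{t-s}f)\right)$, which by \eqref{e.GammaAB} contains exactly $+(t-s)P_s\langle \nabla_p P_{t-s}f,\nabla_\xi P_{t-s}f\rangle-\frac{(t-s)^2}{3}P_s\Vert\nabla_\xi P_{t-s}f\Vert^2$; these cancel the leftover cross term and the $\Vert\nabla_\xi\Vert^2$ terms you were struggling with, so that Lemma \ref{KeyLem} and \eqref{e.3.1} give the clean bound $\phi'(s)\geqslant -P_s\Vert\nabla_p P_{t-s}f\Vert^2=-\frac{2}{\sigma^2}P_s\left(\Gamma(P_{t-s}f)\right)$. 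Moreover $\phi(t)=0$, so integrating over $[0,t]$ and using $P_tf^2-(P_tf)^2=2\int_0^t P_s\left(\Gamma(P_{t-s}f)\right)ds$ yields $t\,\Gamma^{t/2,\,t^2/12}(P_tf)\leqslant\frac{1}{\sigma^2}\left(P_tf^2-(P_tf)^2\right)$, which is the statement.

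Your substitutes for this device do not work. First, the leftover first-order cross term cannot be dominated by the Hessian-square terms discarded in Lemma \ref{KeyLem}: for a linear function $g$ both $\frac{\sigma^2}{2}\sum_{i,j}\left(\frac{\partial^2 g}{\partial p_i\partial p_j}-\alpha\frac{\partial^2 g}{\partial p_i\partial\xi_j}\right)^2$ and $\frac{\sigma^2\beta}{2}\sum_{i,j}\left(\frac{\partial^2 g}{\partial p_i\partial\xi_j}\right)^2$ vanish while $\langle\nabla_p g,\nabla_\xi g\rangle$ does not, so no completing-the-square in the Hessian entries can yield the required pointwise bound; and your alternative hint (controlling $\frac{d}{ds}P_s\Vert\nabla_p P_{t-s}f\Vert^2$) is, once written out, an integration by parts in $s$ --- that is, precisely the $(t-s)$ prefactor --- which you neither perform nor whose boundary terms you track. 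Second, the final integration scheme is oriented the wrong way: from $\phi'(s)\geqslant c(s)\psi'(s)$ with $c\geqslant 0$, $\psi'\geqslant 0$ one gets $\phi(0)\leqslant\phi(t)-\int_0^t c(s)\psi'(s)\,ds$, which does not produce the variance on the right-hand side with a positive constant; what is needed is the \emph{negative} lower bound $\phi'\geqslant-\frac{2}{\sigma^2}P_s\left(\Gamma(P_{t-s}f)\right)$. Third, without a vanishing prefactor the endpoint $\phi(t)=P_t\left(\Gamma^{\alpha(t),\beta(t)}(f)\right)$ always contains $P_t\Vert\nabla_p f\Vert^2$ (the $\nabla_p$ part of \eqref{e.GammaAB} has coefficient one regardless of the choice of $\alpha(t),\beta(t)$), so it can never be absorbed into $P_tf^2-(P_tf)^2$; what such a scheme produces is a Bakry-\'Emery type gradient bound, not a reverse Poincar\'e inequality.
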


\begin{proof}
Let $t>0$. By using the same argument as in the previous proof, we can assume that $f$ is smooth and rapidly decreasing. We consider the functional
\[
\phi (s)=(t-s)P_s ( \Gamma^{\alpha (s), \beta (s) } (P_{t-s} f)), \quad 0 \leqslant  s \leqslant  t,
\]
where $\alpha(s)=\frac{1}{2} (t-s)$ and $\beta (s)=\frac{1}{12} (t-s)^2$. By \eqref{Gamma}, \eqref{e.GammaAB}, \eqref{e.3.1} and Lemma \ref{KeyLem} we have

\begin{align*}
\phi^{\prime}(s)  =&- P_s ( \Gamma^{\alpha (s), \beta (s) } (P_{t-s} f))
 \\
& +\left( t-s \right)P_{s}\langle \nabla_{p}P_{t-s} f, \nabla_{\xi}P_{t-s} f \rangle +\left( t-s \right)\left( 2 \alpha^{\prime}\left( s \right)\alpha\left( s \right)+\beta^{\prime}\left( s \right)\right)P_{s}\Vert \nabla_{\xi}P_{t-s} f\Vert^{2}
\\
&\geqslant  -P_s( \| \nabla_p P_{t-s} f \|^2) =  -\frac{2}{\sigma^2} P_s( \Gamma(P_{t-s} f)).
\end{align*}
Therefore, we have
\[
\phi (0) \leqslant  \frac{2}{\sigma^2} \int_0^t P_s( \Gamma(P_{t-s} f)) ds,
\]
where we used the fact that $\phi$ is positive. We now observe that
\[
\frac{2}{\sigma^2} \int_0^t P_s( \Gamma(P_{t-s} f)) ds=\frac{1}{\sigma^2} (P_tf^2 -(P_tf)^2).
\]
Therefore, we conclude
\[
t \Gamma^{\alpha (0), \beta (0) } (P_{t} f) \leqslant  \frac{1}{\sigma^2} (P_tf^2 -(P_tf)^2).
\]
\end{proof}

\begin{proposition}[Reverse log-Sobolev inequality]\label{reverse log-sob}
Let $f \in C^1(\mathbb{R}^d \times \mathbb{R}^{d})$ be a non-negative bounded function. One has for $t>0$
\[
 \sum_{i=1}^d \left( \frac{\partial \ln P_t f}{\partial p_i } -\frac{1}{2} t \frac{\partial \ln P_t f}{\partial \xi_i } \right)^2 +\frac{1}{12} t^2  \left( \frac{\partial \ln P_t f}{\partial \xi_i }\right)^2 \leqslant  \frac{2}{ \sigma^2 t P_t f } (P_t (f \ln f)  -P_tf \ln P_t f ).
\]
\end{proposition}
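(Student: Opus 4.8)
The plan is to run the argument of the reverse Poincar\'e inequality, replacing the ``linear'' functional $(t-s)P_s\big(\Gamma^{\alpha(s),\beta(s)}(P_{t-s}f)\big)$ by its logarithmic analogue. As in the proof of Proposition~\ref{Kolmogorov1}, it suffices to treat the case where $f$ is smooth, bounded together with all its derivatives, and bounded below by a constant $\varepsilon_0>0$; the general case follows by applying the resulting inequality to $P_\delta f+\varepsilon$ and letting first $\varepsilon\to0$ and then $\delta\to0$ by dominated convergence (the strict positivity of the Gaussian kernel of $P_t$ guarantees $P_tf>0$, so $\ln P_tf$ is well defined). Fix $t>0$ and a point $(p,\xi)$, write $g:=P_{t-s}f$ (so $g\geqslant\varepsilon_0>0$, smooth with bounded derivatives), and set
\[
\phi(s):=(t-s)\,P_s\!\left(\frac{\Gamma^{\alpha(s),\beta(s)}(g)}{g}\right),\qquad \alpha(s)=\tfrac12(t-s),\quad\beta(s)=\tfrac1{12}(t-s)^2,\quad 0\leqslant s\leqslant t .
\]
Then $\phi\geqslant0$, and $\phi(t)=0$ since $(t-s)$ vanishes at $s=t$ while the bracket stays bounded.

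First I would establish the logarithmic counterpart of the identity in Remark~\ref{time-dependent BE}. Because $L$ is a diffusion operator with carr\'e du champ $\Gamma$, the change-of-function rules give, for any smooth $h>0$,
\[
L\!\left(\frac{\Gamma^{\alpha,\beta}(h)}{h}\right)=\frac{L\Gamma^{\alpha,\beta}(h)}{h}-\frac{\Gamma^{\alpha,\beta}(h)\,Lh}{h^{2}}-\frac{2\,\Gamma\big(\Gamma^{\alpha,\beta}(h),h\big)}{h^{2}}+\frac{2\,\Gamma^{\alpha,\beta}(h)\,\Gamma(h)}{h^{3}} .
\]
Combining this with $\partial_s g=-Lg$ and differentiating the explicit dependence on $\alpha(s),\beta(s)$ exactly as in Remark~\ref{time-dependent BE} (the terms $\pm\,\Gamma^{\alpha,\beta}(g)Lg/g^{2}$ cancel), one obtains
\begin{align*}
\phi'(s) = {} & -P_s\!\left(\frac{\Gamma^{\alpha(s),\beta(s)}(g)}{g}\right)+2(t-s)\,P_s\!\left(\frac{\Gamma_2^{\alpha(s),\beta(s)}(g)}{g}-\frac{\Gamma\big(\Gamma^{\alpha(s),\beta(s)}(g),g\big)}{g^{2}}+\frac{\Gamma^{\alpha(s),\beta(s)}(g)\,\Gamma(g)}{g^{3}}\right)\\
& -2\alpha'(s)(t-s)\,P_s\!\left(\frac{\langle\nabla_pg,\nabla_\xi g\rangle}{g}\right)+(t-s)\big(2\alpha'(s)\alpha(s)+\beta'(s)\big)\,P_s\!\left(\frac{\|\nabla_\xi g\|^{2}}{g}\right).
\end{align*}

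The crucial ingredient is the logarithmic version of Lemma~\ref{KeyLem}: for constants $\alpha\in\mathbb R$, $\beta\geqslant0$ and $g>0$,
\[
\frac{\Gamma_2^{\alpha,\beta}(g)}{g}-\frac{\Gamma\big(\Gamma^{\alpha,\beta}(g),g\big)}{g^{2}}+\frac{\Gamma^{\alpha,\beta}(g)\,\Gamma(g)}{g^{3}}\;\geqslant\;\frac{\alpha\,\|\nabla_\xi g\|^{2}-\langle\nabla_\xi g,\nabla_p g\rangle}{g} .
\]
To prove it I would start from the explicit formula for $\Gamma_2^{\alpha,\beta}$ derived in the proof of Lemma~\ref{KeyLem}: its first-order part is precisely $\alpha\|\nabla_\xi g\|^{2}-\langle\nabla_\xi g,\nabla_p g\rangle$, and, after writing $\partial_{p_i}\partial_{p_j}g-\alpha\partial_{p_i}\partial_{\xi_j}g=\partial_{p_i}(D_jg)$ with $D_j:=\partial_{p_j}-\alpha\partial_{\xi_j}$, its second-order part equals $\sum_a\Gamma(Y_ag)$, where $\{Y_a\}$ is the family of commuting constant-coefficient fields $\{D_j\}_{j=1}^d\cup\{\sqrt\beta\,\partial_{\xi_j}\}_{j=1}^d$, so that also $\Gamma^{\alpha,\beta}(g)=\sum_a(Y_ag)^2$. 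Writing $\Gamma(u,v)=\sum_k(X_ku)(X_kv)$ with $X_k:=(\sigma/\sqrt2)\partial_{p_k}$ and using $[X_k,Y_a]=0$ together with the Leibniz rule $\Gamma\big((Y_ag)^2,g\big)=2(Y_ag)\sum_k(X_kY_ag)(X_kg)$, the left-hand side becomes
\[
\frac{\alpha\|\nabla_\xi g\|^{2}-\langle\nabla_\xi g,\nabla_pg\rangle}{g}\;+\;\frac1g\sum_{a,k}\left(X_kY_ag-\frac{(Y_ag)(X_kg)}{g}\right)^{2}=\frac{\alpha\|\nabla_\xi g\|^{2}-\langle\nabla_\xi g,\nabla_pg\rangle}{g}+g\sum_{a,k}\big(X_kY_a\ln g\big)^{2},
\]
which is exactly the first-order part plus a manifestly nonnegative remainder (the analogue of the classical Bakry--\'Emery identity $\Gamma_2(g)-\Gamma(g,\Gamma(g))/g+\Gamma(g)^2/g^2=g^2\|\nabla^2\ln g\|^2$). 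This is where the (modified) curvature information enters, and I expect it to be the step requiring the most care.

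Granting this inequality, the rest is the bookkeeping of the reverse Poincar\'e proof. Substituting it into the formula for $\phi'(s)$, using $\alpha'(s)=-\tfrac12$ (so $-2\alpha'(s)=1$ and $2\alpha(s)=t-s$), $2\alpha'(s)\alpha(s)+\beta'(s)=-\tfrac23(t-s)$, and $\Gamma^{\alpha(s),\beta(s)}(g)=\|\nabla_pg\|^{2}-(t-s)\langle\nabla_pg,\nabla_\xi g\rangle+\tfrac13(t-s)^{2}\|\nabla_\xi g\|^{2}$, all the $\langle\nabla_pg,\nabla_\xi g\rangle$ and $\|\nabla_\xi g\|^{2}$ contributions cancel and one is left with $\phi'(s)\geqslant -P_s(\|\nabla_pg\|^{2}/g)=-\tfrac{2}{\sigma^{2}}P_s\big(\Gamma(P_{t-s}f)/P_{t-s}f\big)$. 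Integrating on $[0,t]$ and using $\phi(t)=0$ gives $\phi(0)\leqslant\tfrac{2}{\sigma^{2}}\int_0^tP_s\big(\Gamma(P_{t-s}f)/P_{t-s}f\big)\,ds$. Since $L(h\ln h)=(1+\ln h)Lh+\Gamma(h)/h$, we have $\frac{d}{ds}P_s(P_{t-s}f\ln P_{t-s}f)=P_s\big(\Gamma(P_{t-s}f)/P_{t-s}f\big)$, hence the integral equals $P_t(f\ln f)-P_tf\ln P_tf$. Finally $\phi(0)=\tfrac{t}{P_tf}\,\Gamma^{t/2,\,t^2/12}(P_tf)=tP_tf\cdot\Gamma^{t/2,\,t^2/12}(\ln P_tf)$, and expanding $\Gamma^{t/2,\,t^2/12}(\ln P_tf)$ by \eqref{e.GammaAB} turns $\phi(0)\leqslant\tfrac{2}{\sigma^2}\big(P_t(f\ln f)-P_tf\ln P_tf\big)$ into the asserted inequality. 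Removal of the auxiliary assumptions on $f$ is routine as indicated above; the only genuinely new point compared with the previous propositions is the logarithmic analogue of Lemma~\ref{KeyLem}.
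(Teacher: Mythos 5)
Your proof is correct and follows essentially the same route as the paper: since $\Gamma^{\alpha,\beta}(g)/g=(P_{t-s}f)\,\Gamma^{\alpha,\beta}(\ln P_{t-s}f)$, your functional $\phi$ is exactly the one used in the paper, with the same choice $\alpha(s)=\tfrac12(t-s)$, $\beta(s)=\tfrac1{12}(t-s)^2$ and the same final bookkeeping via the de Bruijn identity. The only cosmetic difference is that your ``logarithmic key inequality'' is precisely Lemma \ref{KeyLem} applied to $\ln P_{t-s}f$ (multiplied by $g$, the identification being valid because the constant-coefficient fields $X_k$, $Y_a$ commute), which you reprove directly by the sum-of-squares completion instead of simply invoking the lemma.
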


\begin{proof}
As before, we can assume that $f$ is smooth, non-negative and rapidly decreasing.
Let $t>0$. We consider the functional
\[
\phi (s)=(t-s)P_s ( (P_{t-s} f) \Gamma^{\alpha (s), \beta (s) } (\ln P_{t-s} f)), \quad 0 \leqslant  s \leqslant  t,
\]
where $\alpha(s)=\frac{1}{2} (t-s)$ and $\beta (s)=\frac{1}{12} (t-s)^2$.  Similarly to the previous proofs we have
\begin{align*}
\phi^{\prime}(s)  =&- P_s ( (P_{t-s} f)  \Gamma^{\alpha (s), \beta (s) } (\ln P_{t-s} f))+2(t-s)P_s ((P_{t-s} f)  \Gamma_2^{\alpha (s), \beta (s) } (\ln P_{t-s} f)) \\
    & -2(t-s)\alpha^{\prime}(s)\sum_{i=1}^d P_s \left( (P_{t-s} f) \frac{\partial \ln P_{t-s} f}{\partial \xi_i } \frac{\partial \ln P_{t-s}f}{\partial p_i } \right) \\
    & +2(t-s)\alpha(s)\alpha^{\prime}(s)P_s [(P_{t-s} f)  \|  \nabla_\xi \ln P_{t-s} f \|^2] \\
    & +(t-s) \beta^{\prime}(s) P_s [(P_{t-s} f)  \|  \nabla_\xi \ln P_{t-s} f \|^2 ]\\
   \geqslant  &-P_s( (P_{t-s} f)  \| \nabla_p \ln P_{t-s} f \|^2) =  -\frac{2}{\sigma^2} P_s( (P_{t-s} f)  \Gamma(\ln P_{t-s} f)).
\end{align*}
Therefore, we have
\[
\phi (0) \leqslant  \frac{2}{\sigma^2} \int_0^t P_s( (P_{t-s} f) \Gamma(\ln P_{t-s} f)) ds.
\]
We now observe that
\[
2 \int_0^t P_s( (P_{t-s} f) \Gamma(\ln P_{t-s} f)) ds=2 (P_t(f \ln f)  -P_tf \ln P_t f),
\]
and therefore
\[
t (P_t f )  \Gamma^{\alpha (0), \beta (0) } (\ln P_{t} f) \leqslant  \frac{2}{\sigma^2} (P_t(f \ln f)  -P_tf \ln P_t f).
\]
\end{proof}

The fact that the reverse log-Sobolev inequality implies a Wang-Harnack inequality for general Markov operators is by now well-known (see for instance \cite[Proposition 3.4]{BaudoinBonnefont2012}). We deduce therefore the following functional inequality.
\begin{theorem}[Wang-Harnack inequality]
Let $f$ be a non-negative Borel bounded function on $\mathbb{R}^d \times \mathbb{R}^{d}$. Then for every $t>0$, $(p,\xi),(p',\xi') \in \mathbb{R}^d \times \mathbb{R}^d$ and $\alpha >1$ we have
\begin{align*}
 & (P_t f)^\alpha \left( p,\xi \right)) \leqslant  C_{\alpha}\left( t, \left( p,\xi \right), \left( p^{\prime},\xi^{\prime} \right)\right)(P_t f^\alpha )(p^{\prime},\xi^{\prime}),
\end{align*}
where

\begin{align*}
 &
C_{\alpha}\left( t, \left( p,\xi \right), \left( p^{\prime},\xi^{\prime} \right)\right)
\\
& :=\exp \left( \frac{\alpha}{\alpha-1} \left(  \frac{6}{\sigma^2t^3} \sum_{i=1}^d \left( \frac{t}{2}(p_i'-p_i) +(\xi_i'-\xi_i)\right)^2+\frac{1}{2\sigma^2 t} \sum_{i=1}^d (p_i'-p_i)^2\right)  \right).
\end{align*}
\end{theorem}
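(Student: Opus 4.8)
The plan is to run the standard Wang interpolation argument along a path, with the reverse log-Sobolev inequality of Proposition~\ref{reverse log-sob} in the role usually played by a curvature bound. By the usual reductions (treat first $f$ smooth, rapidly decreasing and bounded above and below by positive constants, then pass to the general case by dominated/monotone convergence, using that $P_{t-s}f$ is smooth and strictly positive for $s<t$) we may assume $f$ is as regular as needed. Fix $t>0$, $x=(p,\xi)$, $y=(p',\xi')$ and $\alpha>1$. Choose an absolutely continuous path $\gamma\colon[0,t]\to\mathbb{R}^d\times\mathbb{R}^d$ with $\gamma(0)=x$, $\gamma(t)=y$ and a $C^1$ non-decreasing $\beta\colon[0,t]\to[1,\alpha]$ with $\beta(0)=1$, $\beta(t)=\alpha$, and put
\[
\Lambda(s):=\frac{1}{\beta(s)}\log P_s\!\left((P_{t-s}f)^{\beta(s)}\right)(\gamma(s)),\qquad 0\leqslant s\leqslant t.
\]
Since $\Lambda(0)=\log P_tf(x)$ and $\Lambda(t)=\tfrac1\alpha\log P_t(f^\alpha)(y)$, the asserted inequality is equivalent to $-\int_0^t\Lambda'(s)\,ds\leqslant\tfrac1\alpha\log C_\alpha(t,(p,\xi),(p',\xi'))$.

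First I would differentiate $\Lambda$. Writing $v_s=P_{t-s}f$, $w_s=v_s^{\beta(s)}$ and $\mathcal E_s:=P_s(w_s\log w_s)-(P_sw_s)\log(P_sw_s)$, the commutation $P_sL=LP_s$ makes the second order terms cancel, leaving
\[
\Lambda'(s)=\frac{\beta'(s)}{\beta(s)^2}\,\frac{\mathcal E_s}{P_sw_s}+(\beta(s)-1)\,\frac{P_s\!\big(w_s\,\Gamma(\log v_s)\big)}{P_sw_s}+\frac{1}{\beta(s)}\big\langle\nabla\log P_sw_s,\dot\gamma(s)\big\rangle,
\]
evaluated at $\gamma(s)$. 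The middle term is $\geqslant 0$ (here $\beta\geqslant1$, $\Gamma\geqslant0$), so it is dropped when bounding $-\Lambda'$. For the last term, Cauchy--Schwarz with respect to the bilinear form $\Gamma^{s/2,\,s^2/12}$ followed by Young's inequality produces, for every $\varepsilon>0$, a term $\tfrac{\varepsilon}{\beta}\,\Gamma^{s/2,s^2/12}(\log P_sw_s)$ together with $\tfrac{1}{4\varepsilon\beta}\,(\Gamma^{s/2,s^2/12})^{*}(\dot\gamma)$, where $(\Gamma^{s/2,s^2/12})^{*}$ is the quadratic form on tangent vectors dual to $\Gamma^{s/2,s^2/12}$. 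Applying Proposition~\ref{reverse log-sob} to the nonnegative $w_s$ with time parameter $s$ bounds $\Gamma^{s/2,s^2/12}(\log P_sw_s)$ by $\tfrac{2}{\sigma^2 s}\,\tfrac{\mathcal E_s}{P_sw_s}$, and choosing $\varepsilon=\tfrac{\sigma^2 s\beta'(s)}{2\beta(s)}$ so that the entropy terms cancel gives
\[
-\Lambda'(s)\ \leqslant\ \frac{(\Gamma^{s/2,s^2/12})^{*}(\dot\gamma(s))}{2\sigma^2 s\,\beta'(s)},\qquad
(\Gamma^{s/2,s^2/12})^{*}(V)=\sum_{i=1}^d\Big(4V_{p,i}^2+\tfrac{12}{s}V_{p,i}V_{\xi,i}+\tfrac{12}{s^2}V_{\xi,i}^2\Big)
\]
for a tangent vector $V$ with components $V_{p,i},V_{\xi,i}$; the displayed dual form is, coordinatewise, the inverse of the matrix $\bigl(\begin{smallmatrix}1&-s/2\\-s/2&s^2/3\end{smallmatrix}\bigr)$ attached to $\Gamma^{s/2,s^2/12}$ by Notation~\ref{notation 1} (on the subinterval where $\dot\gamma\equiv0$ one gets $-\Lambda'\leqslant0$ directly, without Young's inequality).

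It then remains to integrate and optimize over the still-free data. For fixed $\gamma$, Cauchy--Schwarz applied to $\int_0^t\beta'=\alpha-1$ gives
\[
\int_0^t\frac{(\Gamma^{s/2,s^2/12})^{*}(\dot\gamma(s))}{2\sigma^2 s\,\beta'(s)}\,ds\ \geqslant\ \frac{1}{2\sigma^2(\alpha-1)}\left(\int_0^t\sqrt{\frac{(\Gamma^{s/2,s^2/12})^{*}(\dot\gamma(s))}{s}}\,ds\right)^{2},
\]
with equality for $\beta'$ proportional to the integrand, so the optimal exponent is $\tfrac{\alpha}{2\sigma^2(\alpha-1)}$ times the square of the infimum of the inner integral over paths $\gamma$ from $x$ to $y$. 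The crux is then to show
\[
\inf_{\gamma}\int_0^t\sqrt{\frac{(\Gamma^{s/2,s^2/12})^{*}(\dot\gamma(s))}{s}}\,ds\ =\ \sqrt{\frac{(\Gamma^{t/2,t^2/12})^{*}(y-x)}{t}},
\]
after which $\tfrac{\alpha}{2\sigma^2(\alpha-1)}\cdot\tfrac{(\Gamma^{t/2,t^2/12})^{*}(y-x)}{t}$ is seen, by expanding the dual form, to equal exactly the exponent $\tfrac{\alpha}{\alpha-1}\bigl(\tfrac{6}{\sigma^2t^3}\sum_i(\tfrac t2(p_i'-p_i)+(\xi_i'-\xi_i))^2+\tfrac{1}{2\sigma^2t}\sum_i(p_i'-p_i)^2\bigr)$ defining $C_\alpha$. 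For ``$\geqslant$'' one uses that for any constant covector $\pi$ with $\Gamma^{s/2,s^2/12}(\pi)\leqslant 1/s$ one has $\sqrt{(\Gamma^{s/2,s^2/12})^{*}(\dot\gamma(s))/s}\geqslant\langle\pi,\dot\gamma(s)\rangle$ pointwise; integrating yields $\langle\pi,y-x\rangle$, and one optimizes over $\pi$, the decisive point being that the family of constraints $\{\Gamma^{s/2,s^2/12}(\pi)\leqslant1/s:0<s\leqslant t\}$ collapses to the single one at $s=t$ because
\[
\frac{d}{ds}\Big(s\,\Gamma^{s/2,s^2/12}(\pi)\Big)=\sum_{i=1}^d(\pi_{p,i}-s\,\pi_{\xi,i})^2\ \geqslant\ 0 ,
\]
where $\Gamma^{\alpha,\beta}$ is identified with the quadratic form $\pi\mapsto|\pi_p-\alpha\pi_\xi|^2+\beta|\pi_\xi|^2$ on covectors $\pi=(\pi_p,\pi_\xi)$. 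The bound ``$\leqslant$'' follows by testing against paths that stay at $x$ on $[0,t-\delta]$ and move from $x$ to $y$ on $[t-\delta,t]$ and letting $\delta\downarrow0$.

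I expect this last optimization to be the main obstacle. Because of the monotonicity identity above, the time-inhomogeneous length functional $\int_0^t\sqrt{(\Gamma^{s/2,s^2/12})^{*}(\dot\gamma)/s}\,ds$ has no interior minimizer---its infimum is only approached as the motion of $\gamma$ concentrates near $s=t$---so one cannot simply solve an Euler--Lagrange equation, and the value of the infimum must be pinned by the matching hand-made lower and upper bounds just described. A related bookkeeping nuisance is that $\gamma$ and $\beta$ cannot both attain the relevant infimum, so a full write-up works with the near-optimal paths $\gamma_\delta$ and the associated (suitably chosen) $\beta_\delta$, derives the inequality with constant $C_\alpha\,e^{o(1)}$ and lets $\delta\downarrow0$; the possible degeneracy of Young's inequality at $s=0$ is harmless since $\dot\gamma_\delta$ vanishes off a small interval near $s=t$. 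Everything else---the cancellation of second order terms in $\Lambda'$, the final algebra matching the exponents, and the approximation reducing to smooth, strictly positive, rapidly decreasing $f$---is routine, and the overall scheme is the one alluded to above via \cite[Proposition 3.4]{BaudoinBonnefont2012}, here adapted to the time-dependent forms $\Gamma^{\alpha(s),\beta(s)}$.
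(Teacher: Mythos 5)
Your proposal is correct, but it does not follow the paper's route. The paper's proof is a two-step reduction: it rewrites Proposition \ref{reverse log-sob} as a reverse log-Sobolev inequality for the single fixed form $\Gamma^{t/2,\,t^2/12}$ and then invokes \cite[Proposition 3.4]{BaudoinBonnefont2012}, whose interpolation runs only in the exponent and the space variable at the fixed time $t$ (applied to $f^{\beta(s)}$ under $P_t$), so all that remains is to compute the control distance $d_t$ of the constant-coefficient form $\Gamma^{t/2,\,t^2/12}$, i.e.\ the dual quadratic form evaluated at $(p'-p,\xi'-\xi)$ --- exactly the algebra you perform at the end. You instead run Wang's full semigroup interpolation $s\mapsto \frac{1}{\beta(s)}\log P_s\bigl((P_{t-s}f)^{\beta(s)}\bigr)(\gamma(s))$, which forces you to apply the reverse log-Sobolev inequality at every intermediate time $s$ with the time-varying forms $\Gamma^{s/2,\,s^2/12}$, and then to evaluate the infimum of the resulting time-inhomogeneous length functional. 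That extra optimization is the genuinely new ingredient of your route, and you resolve it correctly: the identity $\frac{d}{ds}\bigl(s\,\Gamma^{s/2,s^2/12}(\pi)\bigr)=\sum_{i=1}^d(\pi_{p,i}-s\,\pi_{\xi,i})^2\geqslant 0$ collapses the family of dual constraints to the one at $s=t$, the late-moving paths give the matching upper bound, and the resulting exponent $\frac{\alpha}{2\sigma^2(\alpha-1)}\,\frac{(\Gamma^{t/2,t^2/12})^{*}(y-x)}{t}$ coincides with the paper's $\frac{\alpha}{\alpha-1}\frac{d_t^2}{2\sigma^2 t}$, since your inverse-matrix computation reproduces the paper's expansion of $d_t^2$. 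The trade-off is that the paper's argument is essentially two lines plus a distance computation because the interpolation is outsourced to the cited proposition, whereas yours is self-contained but must cope with the degeneracies you flag (no minimizing path, $\beta$ and $\gamma$ optimized only along the approximating family $\gamma_\delta$), all of which you handle appropriately. One small correction of phrasing: the cancellation in $\Lambda'$ comes from the diffusion chain rule $L(v^{\beta})-\beta v^{\beta-1}Lv=\beta(\beta-1)\,v^{\beta}\,\Gamma(\log v)$ inside $P_s$, not from the commutation $P_sL=LP_s$; the formula you then state for $\Lambda'$ is nevertheless the right one.
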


\begin{proof}
As before we assume that $f$ is non-negative and rapidly decreasing. Let $t>0$ be fixed and $(p,\xi),(p',\xi') \in \mathbb{R}^d \times \mathbb{R}^d$. We observe first that the reverse log-Sobolev inequality in Proposition \ref{reverse log-sob} can be rewritten
\[
\Gamma^{\frac{1}{2}t, \frac{1}{12}t^2} (\ln P_t f)\leqslant   \frac{2}{ t\sigma^2 P_t f } (P_t (f \ln f)  -P_tf \ln P_t f ).
\]
We can now integrate the previous inequality as in \cite[Proposition 3.4]{BaudoinBonnefont2012} and deduce
\[
(P_t f)^\alpha (p,\xi) \leqslant  (P_t f^\alpha )(p',\xi') \exp \left( \frac{\alpha}{\alpha-1} \frac{d_t^2((p,\xi),(p',\xi'))}{2\sigma^2t}  \right).
\]
where $d_t$ is the control distance associated to the gradient $\Gamma^{\frac{t}{2}, \frac{t^2}{12}}$ defined by \eqref{e.GammaAB}. Therefore
\begin{align*}
 & d_t^2((p,\xi),(p',\xi')) \\
=& \frac{12}{t^2} \sum_{i=1}^d \left( \frac{1}{2} t(p_i'-p_i) +(\xi_i'-\xi_i)\right)^2+\sum_{i=1}^d (p_i'-p_i)^2 \\
 =&4\sum_{i=1}^d (p_i'-p_i)^2+\frac{12}{t}\sum_{i=1}^d (p_i'-p_i)(\xi'_i-\xi_i)+\frac{12}{t^2}\sum_{i=1}^d (\xi_i'-\xi_i)^2
\end{align*}
and the proof is complete.
\end{proof}

\subsection{Coupling}\label{EuclidCoupling}

In this section, we use coupling techniques to prove Proposition \ref{Kolmogorov1} under slightly different assumptions. We start by recalling the notion of a coupling. Suppose  $\left(\Omega, \mathcal{F}, \mathbb{P}\right)$  is a probability space, and  $X_{t}$ and $\widetilde{X}_{t}$ are two diffusions in $\mathbb{R}^d$ defined on this space with the same generator $L$, starting at $x, \widetilde{x} \in \mathbb{R}^d$ respectively. By their coupling we understand a diffusion $\left( X_t,\widetilde{X}_t \right)$ in $\mathbb{R}^d \times \mathbb{R}^d $ such that its law is a coupling of the laws of $X_{t}$ and $\widetilde{X}_{t}$. That is, the first and the second $d$-dimensional (marginal) distributions of $\left( X_t,\widetilde{X}_t \right)$ are given by distributions of $X_{t}$ and $\widetilde{X}_{t}$.

Let $\mathbb{P}^{\left(x,\widetilde{x}\right)}$ be the distribution of $\left( X_t,\widetilde{X}_t \right)$, so that  $\mathbb{P}^{(x,\widetilde{x})}\left( X_{0}=x,\widetilde{X}_{0}=\widetilde{x}\right)=1$. We denote by
$\mathbb{E}^{\left(x,\widetilde{x}\right)}$ the expectation with respect to the probability measure $\mathbb{P}^{\left(x,\widetilde{x}\right)}$.

To prove Proposition \ref{EuclideanCouplingBakryEmery}, we use the synchronous coupling of Brownian motions in $\mathbb{R}^d$. That is, for $\left(p,\widetilde{p}\right)\in \mathbb{R}^d\times\mathbb{R}^d$ we let $B_{t}^{p}=p+B_{t}$ and $\widetilde{B}_{t}^{\widetilde{p}}=\widetilde{p}+B_{t}$, where
$B_{t}$ is a standard Brownian motion in $\mathbb{R}^d$.

\begin{proposition}\label{EuclideanCouplingBakryEmery}
Let $f\in C^{2}\left(\mathbb{R}^{d}\times\mathbb{R}^{d}\right)$
with bounded second derivatives. If $1\leqslant q<\infty$ then for $t \geqslant 0$,
\[
\left\Vert \nabla_{p}P_{t}f\right\Vert ^{q}\leqslant \sum_{i=1}^{d}P_{t}\left(\left|\frac{\partial f}{\partial p_{i}}+t\frac{\partial f}{\partial\xi_{i}}\right|^{q}\right).
\]

\end{proposition}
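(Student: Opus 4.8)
The plan is to realize the Kolmogorov diffusion explicitly and differentiate under the expectation, exploiting the synchronous coupling. Writing $X_t^{(p,\xi)} = (B_t^p, \xi + \int_0^t B_s^p\, ds)$ with $B_t^p = p + B_t$, one has $P_t f(p,\xi) = \mathbb{E}\big(f(p + B_t,\ \xi + tp + \int_0^t B_s\, ds)\big)$. The key observation is that the map $p \mapsto X_t^{(p,\xi)}$ is affine in $p$ on each path: $B_t^p = B_t^{0} + p$ and $\xi + tp + \int_0^t B_s\, ds$ has $p$-derivative $t$ in each coordinate. Hence, for a fixed realization of the Brownian path, $\frac{\partial}{\partial p_i} f(X_t^{(p,\xi)}) = \frac{\partial f}{\partial p_i}(X_t^{(p,\xi)}) + t\,\frac{\partial f}{\partial \xi_i}(X_t^{(p,\xi)})$ by the chain rule.

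First I would justify differentiating under the expectation: since $f \in C^2$ with bounded second derivatives, $\nabla f$ has at most linear growth, and $X_t^{(p,\xi)}$ has moments of all orders uniformly for $p$ in a neighborhood, so dominated convergence applies to give
\[
\frac{\partial}{\partial p_i} P_t f(p,\xi) = \mathbb{E}\!\left( \frac{\partial f}{\partial p_i}\big(X_t^{(p,\xi)}\big) + t\,\frac{\partial f}{\partial \xi_i}\big(X_t^{(p,\xi)}\big)\right) = P_t\!\left(\frac{\partial f}{\partial p_i} + t\,\frac{\partial f}{\partial \xi_i}\right)(p,\xi).
\]
Then I would bound the Euclidean norm: $\|\nabla_p P_t f\|^2 = \sum_{i=1}^d \big(P_t(\tfrac{\partial f}{\partial p_i} + t\tfrac{\partial f}{\partial \xi_i})\big)^2$. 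Applying Jensen's inequality to the convex function $u \mapsto |u|^q$ (for $q \geq 1$) to each term gives $\big(P_t(\tfrac{\partial f}{\partial p_i} + t\tfrac{\partial f}{\partial \xi_i})\big)^2 \leq \big(P_t|\tfrac{\partial f}{\partial p_i} + t\tfrac{\partial f}{\partial \xi_i}|^q\big)^{2/q}$, hence $\|\nabla_p P_t f\|^2 \leq \big(\sum_{i=1}^d (P_t|\tfrac{\partial f}{\partial p_i} + t\tfrac{\partial f}{\partial \xi_i}|^q)^{2/q}\big)$. To pass from this to the stated inequality $\|\nabla_p P_t f\|^q \leq \sum_{i=1}^d P_t|\tfrac{\partial f}{\partial p_i} + t\tfrac{\partial f}{\partial \xi_i}|^q$, I would raise to the power $q/2$ and use the elementary inequality $\big(\sum_i a_i^{2/q}\big)^{q/2} \leq \sum_i a_i$ valid for $a_i \geq 0$ and $q \geq 2$ (subadditivity of $x \mapsto x^{q/2}$... wait, one needs $\ell^{2/q} \hookrightarrow \ell^1$, i.e. $2/q \leq 1$). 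For $1 \leq q < 2$ one instead uses that $\ell^q \hookrightarrow \ell^2$, i.e. $\|\cdot\|_{\ell^2} \leq \|\cdot\|_{\ell^q}$, applied to the vector $(P_t|\partial_{p_i}f + t\partial_{\xi_i}f|^q)^{1/q}_i$, which gives the claim directly after raising to the $q$-th power; combining the two ranges covers all $1 \leq q < \infty$.

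The main obstacle is organizing the $\ell^q$–$\ell^2$ juggling cleanly so that a single argument covers all $q \in [1,\infty)$, and making the differentiation-under-the-integral step rigorous — the latter requires only the stated hypothesis that $f$ has bounded second derivatives (giving linear growth of $\nabla f$ and a uniform local dominating function via $\sup_{|p'-p|\le 1}\|X_t^{(p',\xi)}\| \in L^1$). There is no regularity issue of the Bakry–Émery type here: the explicit Gaussian structure makes everything transparent, which is exactly the point of contrasting this coupling argument with the $\Gamma$-calculus proof of Proposition \ref{Kolmogorov1}. The case $q=2$ recovers Proposition \ref{Kolmogorov1} (under the $C^2$ rather than Lipschitz hypothesis), and the argument moreover shows equality is attained, e.g. for $f$ linear, consistent with Remark \ref{r.SharpGradientEst}.
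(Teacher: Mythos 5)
Your core mechanism is sound and is essentially the infinitesimal version of the paper's argument: the paper couples two copies of the diffusion synchronously, Taylor-expands $f$ (using the bounded Hessian), applies Jensen's inequality, and passes to a difference-quotient limit, whereas you exploit the affine dependence of the flow on $p$ to differentiate under the expectation and get the exact identity $\partial_{p_i}P_tf=P_t\bigl(\partial_{p_i}f+t\,\partial_{\xi_i}f\bigr)$. For $1\leqslant q\leqslant 2$ your argument is complete: termwise Jensen plus the embedding $\ell^q\hookrightarrow\ell^2$ applied to the vector with entries $\bigl(P_t|\partial_{p_i}f+t\partial_{\xi_i}f|^q\bigr)^{1/q}$ yields the displayed inequality, and at $q=2$ it recovers Proposition \ref{Kolmogorov1} as you note.

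The genuine gap is the branch $q>2$. The ``elementary inequality'' $\bigl(\sum_i a_i^{2/q}\bigr)^{q/2}\leqslant\sum_i a_i$ you invoke there is false: for $2/q\leqslant 1$ the monotonicity of $\ell^r$-quasinorms goes the other way, namely $\sum_i a_i\leqslant\bigl(\sum_i a_i^{2/q}\bigr)^{q/2}$ (your own parenthetical identifies the relevant embedding, but in the wrong direction), so ``combining the two ranges'' does not cover $q>2$. Moreover, no repair is possible for the literal display when $q>2$ and $d\geqslant 2$: take $f(p,\xi)=p_1+p_2$, which is $C^2$ with vanishing second derivatives; then $\Vert\nabla_pP_tf\Vert^q=2^{q/2}$ while $\sum_{i=1}^{d}P_t\bigl(|\partial_{p_i}f+t\,\partial_{\xi_i}f|^q\bigr)=2$. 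What your gradient identity (or the synchronous coupling) does give for every $q\geqslant 1$ is the coordinatewise bound
\begin{equation*}
\Vert\nabla_pP_tf\Vert\;\leqslant\;\sum_{i=1}^{d}\Bigl(P_t\bigl|\partial_{p_i}f+t\,\partial_{\xi_i}f\bigr|^q\Bigr)^{1/q},
\end{equation*}
and this, with the $q$-th root inside the sum, is in fact the inequality with which the paper's own proof concludes; your stronger form is only available after the $\ell^q\hookrightarrow\ell^2$ step, i.e.\ in the range $1\leqslant q\leqslant 2$. So either restrict your claim to $1\leqslant q\leqslant 2$, or state the conclusion in the coordinatewise form for general $q$.
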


\begin{proof}
Consider two copies of Kolmogorov diffusions
\begin{align*}
\mathbf{X}_{t} & =\left(B_{t}^{p},Y_{t}\right)=\left(p+B_{t},\xi+tp+\int_{0}^{t}B_{s}ds\right),\\
\widetilde{\mathbf{X}}_{t} & =\left(\widetilde{B_{t}^{p}},\widetilde{Y_{t}}\right)=\left(\widetilde{p}+\widetilde{B_{t}},\xi+t\widetilde{p}+\int_{0}^{t}\widetilde{B_{s}}ds\right),
\end{align*}
 where $B_{t}$ and $\widetilde{B_{t}}$ are two Brownian motions started
at $0$. Note that $\mathbf{X}_{t}$ starts at $(p,\xi)$ and $\widetilde{\mathbf{X}_{t}}$
starts at $\left(\widetilde{p},\xi\right)$. In order to construct a coupling
of $\left(\mathbf{X}_{t},\widetilde{\mathbf{X}_{t}}\right)$ it suffices to couple $\left(B_{t},\widetilde{B}_{t}\right)$.
Let us synchronously couple $\left(B_{t},\widetilde{B}_{t}\right)$ for
all time so that
\begin{align*}
\left|B_{t}^{p}-\widetilde{B_{t}^{p}}\right| & =\left|p-\widetilde{p}\right|,\\
\left|Y_{t}-\widetilde{Y_{t}}\right| & =t\left|p-\widetilde{p}\right|,
\end{align*}
for all $t\geqslant 0$. By using an estimate on the remainder $R$ of Taylor's approximation to $f$ and
the assumption that $f\in C^{2}\left(\mathbb{R}^{d}\times\mathbb{R}^{d}\right)$
has bounded second derivatives, there exists a $C_f \geqslant0$ such
that
\begin{align*}
 & \left|f\left(\mathbf{X}_{t}\right)-f\left(\widetilde{\mathbf{X}_{t}}\right)\right|\\
 & =\left|\sum_{i=1}^{d}\partial_{p_{i}}f\left(\widetilde{\mathbf{X}_{t}}\right)\left(p_{i}-\widetilde{p}_{i}\right)+\sum_{i=1}^{d}t\partial_{\xi_{i}}f\left(\widetilde{\mathbf{X}_{t}}\right)\left(p_{i}-\widetilde{p}_{i}\right)+R\left(\widetilde{\mathbf{X}_{t}}\right)\right|\\
 & \leqslant\sum_{i=1}^{d}\left|\left(\partial_{p_{i}}f\left(\widetilde{\mathbf{X}_{t}}\right)+t\partial_{\xi_{i}}f\left(\widetilde{\mathbf{X}_{t}}\right)\right)\right|\left|p-\widetilde{p}\right|+\frac{C_f}{2} d^2(1+t)^2\left|p-\widetilde{p}\right|^{2}.
\end{align*}
 Using this estimate and Jensen's inequality we see that
\begin{align*}
 & \left|P_{t}f(p,\xi)-P_{t}f\left(\widetilde{p},\xi\right)\right|=\left|\mathbb{E}^{\left(\left(p,\xi\right),\left(\widetilde{p},\xi\right)\right)}\left[f\left(\mathbf{X}_{t}\right)-f\left(\widetilde{\mathbf{X}_{t}}\right)\right]\right|\\
 & \leqslant\mathbb{E}^{\left(\left(p,\xi\right),\left(\widetilde{p},\xi\right)\right)}\left[\left|f\left(\mathbf{X}_{t}\right)-f\left(\widetilde{\mathbf{X}_{t}}\right)\right|\right]\\
 & \leqslant\sum_{i=1}^{d}\mathbb{E}^{\left(\left(p,\xi\right),\left(\widetilde{p},\xi\right)\right)}\left[\left|\left(\partial_{p_{i}}f\left(\widetilde{\mathbf{X}_{t}}\right)+t\partial_{\xi_{i}}f\left(\widetilde{\mathbf{X}_{t}}\right)\right)\right|^{q}\right]^{\frac{1}{q}}\left|p-\widetilde{p}\right|\\
 & +\frac{C_f}{2} d^2(1+t)^2\left|p-\widetilde{p}\right|^{2}\\
 & =\sum_{i=1}^{d}P_{t}\left(\left|\left(\partial_{p_{i}}f\left(\widetilde{p},\xi\right)+t\partial_{\xi_{i}}f\left(\widetilde{p},\xi\right)\right)\right|^{q}\right)^{\frac{1}{q}}\left|p-\widetilde{p}\right|\\
 & +\frac{C_f}{2} d^2(1+t)^2\left|p-\widetilde{p}\right|^{2}.
\end{align*}
Dividing out by $\left|p-\widetilde{p}\right|$ and taking $\widetilde{p}\to p$
we have that
\begin{align*}
\left\Vert \nabla_{p}P_{t}f\left(p,\xi\right)\right\Vert  & =\limsup_{\widetilde{p}\to p}\frac{\left|P_{t}f(p,\xi)-P_{t}f\left(\widetilde{p},\xi\right)\right|}{\left|p-\widetilde{p}\right|}\\
 & \leqslant \sum_{i=1}^{d}P_{t}\left(\left|\partial_{p_{i}}f\left(p,\xi\right) +t\partial_{\xi_{i}}f\left(p,\xi\right)\right|^{q}\right)^{\frac{1}{q}},
\end{align*}
which proves the statement.
\end{proof}

\begin{remark}
When $q=2$, this coincides with the conclusion of Proposition \ref{Kolmogorov1}. The coupling method here is simpler than the $\Gamma$-calculus method and moreover yields a family of inequalities for $q \geqslant 1$. However, on the other hand, it appears difficult to prove the reverse Poincar\'e and the reverse log-Sobolev inequalities for the semigroup by using coupling techniques.
\end{remark}

\section{Relativistic diffusion}\label{s.RelativDiff}

In this section we consider the diffusion $\mathbf{X}_t=(B_t,\int_0^tB_s ds)$, where $B_t$ is a Brownian motion on the $d$-dimensional hyperbolic space $\mathbb{H}^d$. This is the relativistic Brownian motion introduced by R.~Dudley \cite{Dudley1966} and studied by J.~Franchi and Y.~Le Jan in \cite{FranchiLeJan2007}. In this section, we will prove functional inequalities for the generator of $\mathbf{X}_t$. Our methods will only involve $\Gamma$-calculus through generalized curvature dimension conditions. The emphasis on $\Gamma$-calculus in this section will allow us to obtain sharper estimates for the relativistic diffusion. In particular, the estimate \eqref{RelGrad} in Corollary \ref{RelGradThm} is sharper than the ones given in Theorems \ref{GenGrad-Gamma} and  \ref{GenGradient}. In the following sections we will prove similar theorems using both $\Gamma$-calculus and coupling techniques but for a larger class of diffusions.

We follow the notation in \cite{FranchiLeJan2007}. Recall that the Minkowski space is the product $\mathbb{R} \times \mathbb{R}^{d}$  with $d \geqslant 2$
 \[
 \mathbb{R}^{1,d}=\{ \xi=(\xi_0,  \vec{\xi} ) \in \mathbb{R}\times \mathbb{R}^d ) \}
 \]
equipped with the Lorentzian norm $q\left( \xi, \xi \right):=\xi_0^2 - \Vert \vec{\xi} \Vert^2$. The standard basis in $\mathbb{R}^{1,d}$ is denoted by $e_{0}, ..., e_{d}$. Let $\mathbb{H}^d$ be the positive half of the  unit sphere in $\mathbb{R}^{1,d}$, namely,

\[
\mathbb{H}^d:=\left\{  p \in \mathbb{R}^{1,d}: p_{0}>0, q\left( p, p \right)=1 \right\}.
\]
Note that $\mathbb{H}^d$ has a standard parametrization $p=( p_{0},  \vec{p} )=\left( \cosh r, \sinh r \ \omega\right)$ with $r \geqslant 0$, $\omega \in \mathbb{S}^{d-1}$. In these coordinates the hyperbolic metric is given by $dr^{2}+\sinh^{2}r d\omega^{2}$, where $d\omega$ is the metric on the sphere $\mathbb{S}^{d-1}$, and the volume element is

\[
\int_{\mathbb{H}^d} f\left( \Omega \right) d\Omega=\int_{0}^{\infty}\int_{\mathbb{S}^{d-1}}  f\left( r, \omega \right) \sinh^{d-1}r dr d\omega.
\]
Finally, the corresponding Laplace-Beltrami operator $\mathbb{H}^d$ can be written in these coordinates as follows (see \cite[Proposition 3.5.4]{FranchiLeJanBook2012}).

\[
\Delta^\mathbb{H}f\left(  r, \omega \right):=\frac{\partial^{2}f}{\partial r^{2}}\left(   r, \omega \right)+\left( d-1 \right) \coth r \frac{\partial f}{\partial r }\left(  r, \omega \right) +\frac{1}{\sinh^{2} r } \Delta_{\mathbb{S}^{d-1}}^{\omega} f\left(  r, \omega \right),
\]
where $\Delta_{\mathbb{S}^{d-1}}^{\omega}$ is the Laplace operator on $\mathbb{S}^{d-1}$ acting on the variable $\omega$. We denote by $\nabla^\mathbb{H}$ the gradient on $\mathbb{H}^d$ viewed as a Riemannian manifold.

Following the construction in \cite{Dudley1966}, we consider a stochastic process with values in the unitary tangent bundle $T^{1}\mathbb{R}^{1,d}$ of the Minkowski space-time $ \mathbb{R}^{1,d}$. We identify the unit tangent bundle with $\mathbb{H}^d \times \mathbb{R}^{1,d}$. Then the relativistic Brownian motion is the process $X_{t}:=\left( g_{t}, \xi_{t} \right)$, where  $g_{t}$ is a Brownian motion in $\mathbb{H}^d$ starting at $e_{0}$, and the second process is the time integral of $g_{t}$

\[
\xi_{t}:=\int_{0}^{t} g_{s}ds.
\]
By \cite[Theorem VII.6.1]{FranchiLeJanBook2012} the process $X_{t}$ is a Markov Lorentz-invariant diffusion whose generator is the relativistic Laplacian defined as follows. For $\sigma >0$, the relativistic Laplacian for $f \in C^{2}\left( \mathbb{H}^d \times \mathbb{R}^{1,d} \right)$ is the operator
\begin{align*}
& \left( Lf \right) \left( p, \xi \right)= \langle p, \nabla_\xi f \left( p, \xi\right) \rangle+\frac{\sigma^2}{2} \Delta_p^\mathbb{H}f \left( p, \xi \right)=
\\
& p_{0}\frac{\partial f}{\partial \xi_{0}}\left( p, \xi \right)+\sum_{j=1}^{d} p_{j}\frac{\partial f}{\partial \xi_{j}}\left( p, \xi \right)+\frac{\sigma^2}{2} \Delta_p^\mathbb{H}f \left( p, \xi \right),
\end{align*}
where $\Delta_p^\mathbb{H}$ is the Laplace-Beltrami operator $\Delta^\mathbb{H}$ on $\mathbb{H}^d$ acting on the variable $p$. The operator $L$ is hypoelliptic and generates the Markov process $X_{t}$. Let $P_{t}$ be the heat semigroup with the operator $L$ being its generator.

We consider functions on $\mathbb{H}^d \times \mathbb{R}^{1, d}$ with $f \left( p, \xi\right), p \in \mathbb{H}^d, \xi \in \mathbb{R}^{1,d}$. Recall that operators $\nabla^\mathbb{H}$ and $\Delta^\mathbb{H}$ act on the variable $p$ for  $f\left( p, \xi \right)$. We use $\nabla_\xi$ for the usual Euclidean gradient. Let $\Gamma\left( f \right)$ be the \emph{carr\'e du champ operator} for $L$. Recall that we view $\mathbb{H}^d$ as a Riemannian manifold with $\Delta^\mathbb{H}$ being the Laplace-Beltrami operator.

Our main result of this section is a generalized curvature-dimension inequality for $\mathbb{H}^d \times \mathbb{R}^{1, d}$ with the operator $L$ and $\nabla_{\xi}$ playing a role of the vertical gradient. Namely, we define a symmetric, first-order differential bilinear form $\Gamma^{Z}: C^{\infty}\left( \mathbb{H}^d \times \mathbb{R}^{1, d} \right) \times C^{\infty}\left( \mathbb{H}^d \times \mathbb{R}^{1, d} \right) \rightarrow \mathbb{R}$ by

\begin{equation}\label{e.4.1}
\Gamma^Z(f):= \| \nabla_\xi f \|^2,
\end{equation}
for any $f \in C^{\infty}\left( \mathbb{H}^d \times \mathbb{R}^{1, d} \right)$.

\begin{theorem}[Curvature-dimension condition]\label{t.4.1} The operator $L$ satisfies the following generalized curvature-dimension condition for any $f \in C^{\infty}\left( \mathbb{H}^d \times \mathbb{R}^{1, d} \right)$

\begin{align*}
& \Gamma_2(f)\geqslant -\frac{d}{2}\sigma^{2} \Gamma(f)-\frac{1}{4} \Gamma^Z(f),
\\
& \Gamma_2^Z (f) \geqslant 0.
\end{align*}

\end{theorem}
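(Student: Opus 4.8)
The plan is to verify the two inequalities by direct computation using the definitions of the relevant operators, in the spirit of the proof of Lemma \ref{KeyLem}. First I would recall that on the Riemannian manifold $\mathbb{H}^d$ the Laplace--Beltrami operator $\Delta^\mathbb{H}$ satisfies the classical Bochner identity, and since $\mathbb{H}^d$ has constant sectional curvature $-1$ its Ricci curvature is $-(d-1)\,\mathrm{id}$. Thus, writing $\Gamma^\mathbb{H}$ for the carr\'e du champ of $\tfrac{\sigma^2}{2}\Delta^\mathbb{H}$ acting in the $p$ variable and $\Gamma_2^\mathbb{H}$ for the associated iterated operator, the Bochner formula gives
\[
\Gamma_2^\mathbb{H}(f) = \frac{\sigma^4}{4}\,\|\nabla^\mathbb{H}\nabla^\mathbb{H} f\|_{HS}^2 + \frac{\sigma^4}{4}\,\mathrm{Ric}(\nabla^\mathbb{H} f,\nabla^\mathbb{H} f) \geqslant -\frac{(d-1)\sigma^2}{2}\,\Gamma^\mathbb{H}(f),
\]
which is the $CD(-(d-1)\sigma^2/2,\infty)$ inequality for the vertical-free part. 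Here $\Gamma(f) = \Gamma^\mathbb{H}(f) = \tfrac{\sigma^2}{2}\|\nabla^\mathbb{H} f\|^2$ by the analogue of \eqref{Gamma}.

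Next I would compute $\Gamma_2(f) = \tfrac12 L\Gamma(f) - \Gamma(f,Lf)$ explicitly, splitting $L = \tfrac{\sigma^2}{2}\Delta_p^\mathbb{H} + \langle p,\nabla_\xi\cdot\rangle$. The second-order (in $p$) part contributes exactly $\Gamma_2^\mathbb{H}(f)$, which we have just bounded below. The remaining terms come from the cross action of the first-order transport operator $\langle p,\nabla_\xi\cdot\rangle$ against $\Gamma^\mathbb{H}$; tracking these produces a term of the form $\sigma^2\langle \nabla^\mathbb{H}_p f, \nabla^\mathbb{H}_p\langle p,\nabla_\xi f\rangle\rangle$ minus its counterpart, and the key point is that $\nabla^\mathbb{H}_p$ applied to $p$ (viewed through the embedding $\mathbb{H}^d\subset\mathbb{R}^{1,d}$) is controlled. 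Completing the square in the mixed gradient term $\langle \nabla^\mathbb{H} f, \nabla_\xi f\rangle$ and absorbing it, I expect to arrive at a bound of the shape $\Gamma_2(f) \geqslant -\tfrac{d}{2}\sigma^2\Gamma(f) - \tfrac14\|\nabla_\xi f\|^2$, where the extra $\tfrac12\sigma^2\Gamma(f)$ beyond the pure Riemannian $CD$ constant and the $\tfrac14$ coefficient arise precisely from this completion of squares (the factor $\tfrac14$ being the square of the $\tfrac12$ from $ab \geqslant -\tfrac14 a^2 - b^2$ type estimates). For the second inequality, $\Gamma^Z(f) = \|\nabla_\xi f\|^2$ and $\Gamma_2^Z(f) = \tfrac12 L\Gamma^Z(f) - \Gamma^Z(f,Lf)$: since $\nabla_\xi$ commutes with $\nabla_\xi$ and the transport term $\langle p,\nabla_\xi\cdot\rangle$ is a constant-coefficient (in $\xi$) first-order operator, while $\Delta_p^\mathbb{H}$ acts only in $p$, a direct computation should give $\Gamma_2^Z(f) = \tfrac{\sigma^2}{2}\sum_i\|\nabla^\mathbb{H}\partial_{\xi_i}f\|^2 \geqslant 0$, with no cross terms surviving because $\partial_{\xi_i}\langle p,\nabla_\xi f\rangle = \partial_{\xi_i}(p_j\partial_{\xi_j}f)$ pairs against $\partial_{\xi_i}f$ in a way that $\Gamma^Z$ does not see the $p$-dependence.

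The main obstacle I anticipate is bookkeeping the curvature contribution correctly: one must use that $\mathbb{H}^d$ is embedded in Minkowski space $\mathbb{R}^{1,d}$ with the transport coefficient being the position vector $p$ itself, so the interaction between $\nabla^\mathbb{H}_p$ and the Euclidean-looking quantity $\langle p,\nabla_\xi f\rangle$ requires care with the second fundamental form of $\mathbb{H}^d$ (equivalently, with $\nabla^\mathbb{H}_p p$). Getting the constants $\tfrac{d}{2}\sigma^2$ and $\tfrac14$ exactly right, rather than some larger constants, is where the completion of squares must be done optimally — using the full Hessian square $\tfrac{\sigma^4}{4}\|\nabla^\mathbb{H}\nabla^\mathbb{H} f\|_{HS}^2$ as a reservoir to absorb the mixed terms, exactly as the $\tfrac{\sigma^2}{2}\sum_{i,j}(\partial_{p_i}\partial_{p_j}f - \cdots)^2$ term was used in the proof of Lemma \ref{KeyLem}. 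Once the algebra is organized this way, both inequalities follow by discarding manifestly nonnegative squares.
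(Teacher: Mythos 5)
Your proposal is correct and follows essentially the same route as the paper: compute $\Gamma(f)=\tfrac{\sigma^2}{2}\|\nabla^{\mathbb{H}}_p f\|^2$ and $\Gamma_2(f)=\tfrac{\sigma^4}{4}\Gamma_2^{\mathbb{H}}(f)-\tfrac{\sigma^2}{2}\langle\nabla^{\mathbb{H}}_p f,\nabla_\xi f\rangle$, apply Bochner on $\mathbb{H}^d$ with $\operatorname{Ric}=-(d-1)$, absorb the first-order cross term by a quadratic (Young-type) inequality to produce the extra $\tfrac{\sigma^2}{2}\Gamma(f)$ and the $\tfrac14\Gamma^Z(f)$, and check $\Gamma_2^Z(f)=\tfrac{\sigma^2}{2}\|\nabla_\xi\nabla^{\mathbb{H}}_p f\|^2\geqslant 0$ directly. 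One small clarification: the mixed term is a gradient--gradient term absorbed into $\Gamma(f)$ and $\Gamma^Z(f)$ by the elementary estimate $-\tfrac{\sigma^2}{2}\langle a,b\rangle\geqslant-\tfrac{\sigma^4}{4}\|a\|^2-\tfrac14\|b\|^2$, so the Hessian square from Bochner is simply discarded rather than used as a reservoir, unlike in Lemma \ref{KeyLem}.
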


\begin{proof}
A simple calculation of the carr\'e du champ operator for $L$ is given by
\[
\Gamma(f):=\frac{1}{2} (Lf^2-2fLf)=\frac{\sigma^2}{2} \| \nabla^\mathbb{H}_p f \|^2,
\]
where as before $\nabla^\mathbb{H}_p$ is the Riemannian gradient on $\mathbb{H}^d$. Straightforward  computations show that the iterated carr\'e du champ operator
\[
\Gamma_2(f):= \frac{1}{2} (L\Gamma(f)-2\Gamma(f,Lf))
\]
is given by
\[
\Gamma_2(f)= \frac{\sigma^4}{4} \Gamma^\mathbb{H}_2(f)-\frac{\sigma^2}{2} \langle \nabla^\mathbb{H}_p f, \nabla_\xi f \rangle,
\]
where $\Gamma^\mathbb{H}_2(f)$ is the iterated carr\'e du champ operator for $\Delta_p^\mathbb{H}$. Recall that we view $\mathbb{H}^d$ as a Riemannian manifold with $\Delta^\mathbb{H}$ being the Laplace-Beltrami operator, therefore we can use Bochner's formula for $\Delta_p^\mathbb{H}$
\[
\Gamma^\mathbb{H}_2(f) \geqslant -(d-1)\| \nabla^\mathbb{H}_p f \|^2,
\]
thus
\[
\Gamma_2(f)\geqslant -\frac{d-1}{2} \sigma^{2} \Gamma(f)-\frac{\sigma^2}{2} \langle \nabla^\mathbb{H}_p f, \nabla_\xi f \rangle.
\]
Now we can use an elementary estimate

\[
-\frac{\sigma^2}{2} \langle \nabla^\mathbb{H}_p f, \nabla_\xi f \rangle \geqslant -\frac{\sigma^4}{4} \Vert \nabla^\mathbb{H}_p f \Vert^{2}-\frac{1}{4} \Vert \nabla_\xi f \Vert^{2}=-\frac{\sigma^2}{2} \Gamma\left( f \right)-\frac{1}{4} \Vert \nabla_\xi f \Vert^{2}
\]
to see that

\[
\Gamma_2(f)\geqslant -\frac{d}{2}\sigma^{2} \Gamma(f)-\frac{1}{4} \Vert \nabla_\xi f \Vert^{2}.
\]
The last term in this inequality is the bilinear form $\Gamma^Z$ defined by \eqref{e.4.1}. Its iterated form is

\[
\Gamma^Z_2(f):= \frac{1}{2} (L\Gamma^Z(f)-2\Gamma^Z(f,Lf)),
\]
for which another routine computation shows that
\[
\Gamma^Z_2(f)= \frac{\sigma^2}{2} \| \nabla_\xi  \nabla^\mathbb{H}_p f \|^2 \geqslant 0,
\]
which concludes the proof.
\end{proof}


For later use, our first task is to construct a convenient Lyapunov function for the operator $L$. A \emph{Lyapunov function} on $ \mathbb{H}^d \times \mathbb{R}^{1,d}$ for the operator $L$ is a smooth function $W$ such that $LW \leqslant CW$ for some $C>0$.  Consider the function
\begin{equation} \label{e.LyapunovFunction}
W(p,\xi):=1+\xi_0^2+\| \vec{\xi} \|^2+d_R (p_0,p)^2, \quad  p \in \mathbb{H}^d, \xi \in \mathbb{R}^{1,d},
\end{equation}
where $p_0$ is a fixed point in $\mathbb{H}^d$ and $d_R $ is the Riemannian distance in $ \mathbb{H}^d$.

We observe that $W$ is smooth since $d_R (p_0,\cdot)^2$ is (on the hyperbolic space the exponential map at $p_0$, is a diffeomorphism). Using the Laplacian comparison theorem on $\mathbb{H}^d$, one can see that $W$ has the following properties
\begin{align*}
& W \geqslant  1,
\\
& \| \nabla_\xi W \| + \| \nabla_p W\| \leqslant  C W,
\\
& LW \leqslant  C W \text{ for some constant } C>0,
\\
& \{ W \leqslant  m \} \text{ is compact for every } m.
\end{align*}
We shall make use of the Lyapunov function $W$ defined by \eqref{e.LyapunovFunction} to prove the following result.

\begin{theorem}[Gradient estimate] \label{t.3.2} Consider the operator $L$ and its corresponding heat semigroup $P_t$. For any $f \in C_{0}^{\infty}\left( \mathbb{H}^d \times \mathbb{R}^{1, d} \right)$ and $t \geqslant 0$

\[
2d\sigma^{2} \Gamma\left( P_{t}f \right)\left( x \right)+\Gamma^{Z}\left( P_{t}f \right)\left( x \right) \leqslant e^{d\sigma^{2}t}\left( 2d\sigma^{2} P_{t}\left(\Gamma\left( f \right) \right)\left( x \right)+ P_{t}\left(\Gamma^{Z}\left( f \right) \right)\left( x \right) \right).
\]
\end{theorem}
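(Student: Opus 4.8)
The plan is to run a Bakry--\'Emery interpolation using the pair of gradients $\Gamma$ and $\Gamma^Z$, with the generalized curvature-dimension inequality of Theorem \ref{t.4.1} playing the role of the classical $CD(\rho,\infty)$ condition and the Lyapunov function $W$ of \eqref{e.LyapunovFunction} supplying the analytic justification. Fix $t>0$, a point $x\in\mathbb{H}^d\times\mathbb{R}^{1,d}$ and $f\in C_0^\infty(\mathbb{H}^d\times\mathbb{R}^{1,d})$, and consider the functional
\[
\phi(s):=P_s\Big(2d\sigma^2\,\Gamma(P_{t-s}f)+\Gamma^Z(P_{t-s}f)\Big)(x),\qquad 0\leqslant s\leqslant t.
\]
Then $\phi(0)=2d\sigma^2\Gamma(P_tf)(x)+\Gamma^Z(P_tf)(x)$ is the left-hand side of the claim, while $\phi(t)=P_t\big(2d\sigma^2\Gamma(f)+\Gamma^Z(f)\big)(x)$ is, up to the factor $e^{d\sigma^2 t}$, the right-hand side. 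Thus the theorem reduces to the differential inequality $\phi'(s)\geqslant -d\sigma^2\,\phi(s)$ followed by a Gr\"onwall argument.

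To compute $\phi'$, write $g:=P_{t-s}f$ and $h_s:=2d\sigma^2\,\Gamma(g)+\Gamma^Z(g)$, and use $\frac{d}{ds}P_s(h_s)=P_s(Lh_s)+P_s(\partial_s h_s)$ together with $\partial_s g=-Lg$ and the bilinearity of $\Gamma$ and $\Gamma^Z$. After collecting terms one recognizes the definitions $\Gamma_2(g)=\frac12\big(L\Gamma(g)-2\Gamma(g,Lg)\big)$ and $\Gamma_2^Z(g)=\frac12\big(L\Gamma^Z(g)-2\Gamma^Z(g,Lg)\big)$, which gives
\[
\phi'(s)=2\,P_s\Big(2d\sigma^2\,\Gamma_2(P_{t-s}f)+\Gamma_2^Z(P_{t-s}f)\Big)(x).
\]
Now apply Theorem \ref{t.4.1} to $g=P_{t-s}f$. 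Since $\Gamma_2(g)\geqslant -\frac{d}{2}\sigma^2\Gamma(g)-\frac14\Gamma^Z(g)$ and $\Gamma_2^Z(g)\geqslant 0$, we get
\[
2d\sigma^2\Gamma_2(g)+\Gamma_2^Z(g)\geqslant -d^2\sigma^4\,\Gamma(g)-\frac{d\sigma^2}{2}\,\Gamma^Z(g)=-\frac{d\sigma^2}{2}\Big(2d\sigma^2\Gamma(g)+\Gamma^Z(g)\Big).
\]
Because $P_s$ is positivity preserving, this yields $\phi'(s)\geqslant -d\sigma^2\,\phi(s)$, so $s\mapsto e^{d\sigma^2 s}\phi(s)$ is nondecreasing and $\phi(0)\leqslant e^{d\sigma^2 t}\phi(t)$, which is exactly the asserted estimate. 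Note that the particular combination $2d\sigma^2\Gamma+\Gamma^Z$ is forced by the requirement that the lower bound on $2d\sigma^2\Gamma_2+\Gamma_2^Z$ be a constant multiple of the functional itself, so that Gr\"onwall closes.

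\textbf{Main obstacle.} The computation above is formal, and the substance of the proof is its justification on the non-compact, only hypoelliptic space $\mathbb{H}^d\times\mathbb{R}^{1,d}$: one needs $\phi$ to be $C^1$ with the derivative written above, the identity $\frac{d}{ds}P_s h(x)=P_s(Lh)(x)$, and the right to move $L$ through $\Gamma,\Gamma^Z$ with no loss of mass at infinity, as well as the validity of the $\Gamma_2$, $\Gamma_2^Z$ identities and bounds for $g=P_{t-s}f$, which is smooth by H\"ormander's theorem but not compactly supported. This is precisely where the Lyapunov function $W$ enters: from $W\geqslant 1$, $LW\leqslant CW$, $\|\nabla_\xi W\|+\|\nabla_p W\|\leqslant CW$, and compactness of the sublevel sets $\{W\leqslant m\}$ one obtains conservativeness of $P_t$ together with a priori bounds, uniform for $s\in[0,t]$, on $P_{t-s}f$ and its first and second derivatives in terms of $W$; these make every term above finite and every differentiation under $P_s$ legitimate. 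Granting these now-standard facts of the generalized $\Gamma$-calculus framework, the computation of the second paragraph goes through verbatim, and the extension to more general $f$ follows by the usual approximation.
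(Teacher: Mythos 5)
Your proposal is correct and follows essentially the same route as the paper: the paper writes the same combination with time-dependent coefficients $a(s)=2d\sigma^2 e^{d\sigma^2 s}$, $b(s)=e^{d\sigma^2 s}$, checks $L\varphi+\partial_s\varphi\geqslant 0$ via Theorem \ref{t.4.1}, and concludes by a parabolic comparison principle justified with the Lyapunov function $W$ and a cutoff argument, which is exactly your Gr\"onwall step on $e^{d\sigma^2 s}\phi(s)$ together with the analytic justification you flag as the main obstacle.
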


\begin{proof} We fix $t > 0$ throughout the proof. For $0 < s < t$, $x \in \mathbb{H}^d \times \mathbb{R}^{1, d}$  we denote

\begin{align*}
& \varphi_{1}\left( x, s \right):=\Gamma\left( P_{t-s}f \right)\left( x \right),
\\
&  \varphi_{2}\left( x, s \right):=\Gamma^{Z}\left( P_{t-s}f \right)\left( x \right).
\end{align*}
Then
\begin{align*}
& L\varphi_{1}+\frac{\partial \varphi_{1}}{\partial s}=2\Gamma_{2}\left( P_{t-s}f \right),
\\
&  L\varphi_{2}+\frac{\partial \varphi_{2}}{\partial s}=2\Gamma_{2}^{Z}\left( P_{t-s}f \right).
\end{align*}
Now we would like to find two non-negative smooth functions $a\left( s \right)$ and $b\left( s \right)$ such that for
\[
\varphi\left( x, s \right):= a\left( s \right)\varphi_{1}\left( x, s \right) +b\left( s \right) \varphi_{2}\left( x, s \right),
\]
we have
\[
L\varphi+\frac{\partial \varphi}{\partial s}  \geqslant 0.
\]
Then by Theorem \ref{t.4.1} we have

\begin{align*}
& L\varphi+\frac{\partial \varphi}{\partial s}=
\\
& a^{\prime}\left( s \right) \Gamma\left( P_{t-s}f \right)+b^{\prime}\left( s \right) \Gamma^{Z}\left( P_{t-s}f \right)+2a\left( s \right)\Gamma_{2}\left( P_{t-s}f \right)+2b\left( s \right)\Gamma_{2}^{Z}\left( P_{t-s}f \right)\geqslant
\\
& a^{\prime}\left( s \right) \Gamma\left( P_{t-s}f \right)+b^{\prime}\left( s \right) \Gamma^{Z}\left( P_{t-s}f \right)+2a\left( s \right) \left( -\frac{d}{2}\sigma^{2} \Gamma\left( P_{t-s}f \right)-\frac{1}{4} \Gamma^Z\left( P_{t-s}f \right)\right)=
\\
& \left( a^{\prime}-ad\sigma^{2}\right)\Gamma\left( P_{t-s}f \right)+\left( b^{\prime}-\frac{a}{2}\right)\Gamma^Z\left( P_{t-s}f \right).
\end{align*}
One can easily see that if we choose $b\left( s \right)=e^{\alpha s}$ and $a\left( s \right)=ke^{\alpha s}$ with $\alpha=d\sigma^{2}$ and $k=2d\sigma^{2}$, then the last expression is $0$. Using the existence of the Lyapunov function $W$ as defined by \eqref{e.LyapunovFunction} and a cutoff argument as in \cite[Theorem 7.3]{BaudoinEMS2014}, we deduce from a parabolic comparison principle
\[
P_{t}\left( \varphi\left( \cdot, t \right) \right) \left( x \right)\geqslant \varphi\left( x, 0 \right).
\]
Observe that

\begin{align*}
& \varphi\left( x, 0 \right)= a\left( 0 \right)\varphi_{1}\left( x, 0 \right) +b\left( 0 \right) \varphi_{2}\left( x, 0 \right)=2d\sigma^{2} \Gamma\left( P_{t}f \right)\left( x \right)+\Gamma^{Z}\left( P_{t}f \right)\left( x \right),
\\
& P_{t}\left( \varphi\left( \cdot, t \right) \right) \left( x \right)=a\left( t \right) P_{t}\left(\Gamma\left( f \right) \right)\left( x \right)+b\left( t \right) P_{t}\left(\Gamma^{Z}\left( f \right) \right)\left( x \right)=
\\
& e^{d\sigma^{2}t}\left( 2d\sigma^{2} P_{t}\left(\Gamma\left( f \right) \right)\left( x \right)+ P_{t}\left(\Gamma^{Z}\left( f \right) \right)\left( x \right) \right),
\end{align*}
therefore

\[
2d\sigma^{2} \Gamma\left( P_{t}f \right)\left( x \right)+\Gamma^{Z}\left( P_{t}f \right)\left( x \right) \leqslant e^{d\sigma^{2}t}\left( 2d\sigma^{2} P_{t}\left(\Gamma\left( f \right) \right)\left( x \right)+ P_{t}\left(\Gamma^{Z}\left( f \right) \right)\left( x \right) \right).
\]

\end{proof}

\begin{corollary}[Poincar\'e type inequality] For any $f\in C_{0}^{\infty}\left(\mathbb{H}^d \times \mathbb{R}^{1, d}\right)$ and $t \geqslant 0$
\[
P_{t}\left(f^{2}\right)-\left(P_{t}f\right)^{2}\leqslant \frac{e^{d\sigma^{2}t}-1}{\left(d\sigma^{2}\right)^{2}}\left(2d\sigma^{2}P_{t}\left(\Gamma\left(f\right)\right)+P_{t}\left(\Gamma^{Z}\left(f\right)\right)\right).
\]

\end{corollary}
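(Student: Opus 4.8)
The plan is to run the standard interpolation argument along the semigroup, exactly as in the proof of the Poincaré inequality under a curvature-dimension condition, but using the gradient estimate of Theorem \ref{t.3.2} in its integrated form. First I would fix $t > 0$ and $f \in C_0^\infty(\mathbb{H}^d \times \mathbb{R}^{1,d})$, and introduce the interpolation functional $\psi(s) := P_s\bigl((P_{t-s}f)^2\bigr)$ for $0 \leqslant s \leqslant t$, so that $\psi(t) - \psi(0) = P_t(f^2) - (P_t f)^2$. Differentiating and using that $\partial_s P_{t-s}f = -L P_{t-s}f$ together with the definition of the carré du champ operator gives $\psi'(s) = 2 P_s\bigl(\Gamma(P_{t-s}f)\bigr)$; these manipulations are justified because $P_t$ has a smooth kernel and $f$ is compactly supported smooth, so all the quantities involved decay appropriately.

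The second step is to control $P_s(\Gamma(P_{t-s}f))$ by the right-hand side of the desired inequality. Applying Theorem \ref{t.3.2} with $P_{t-s}f$ in place of $f$ and the time parameter $s$, and using the semigroup property $P_s P_{t-s} = P_t$, I get
\[
2d\sigma^2\, \Gamma(P_t f) + \Gamma^Z(P_t f) \leqslant e^{d\sigma^2 s}\Bigl(2d\sigma^2\, P_s(\Gamma(P_{t-s}f)) + P_s(\Gamma^Z(P_{t-s}f))\Bigr).
\]
This is not directly what I want, since it bounds the quantity at time $t$ by the one at time $s$; instead I should apply Theorem \ref{t.3.2} "the other way," i.e. with the roles arranged so that I bound $P_s(\Gamma(P_{t-s}f))$ from above. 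Concretely, apply Theorem \ref{t.3.2} to the function $g = P_{t-s}f$ with time $s$: since $P_s g = P_t f$, the theorem reads $2d\sigma^2 \Gamma(P_t f) + \Gamma^Z(P_t f) \leqslant e^{d\sigma^2 s}\bigl(2d\sigma^2 P_s(\Gamma(g)) + P_s(\Gamma^Z(g))\bigr)$, which lower-bounds the $s$-time average; so the clean route is to instead integrate $\psi'$ after bounding $P_s(\Gamma(P_{t-s}f))$ using that $s \mapsto 2d\sigma^2 P_s(\Gamma(P_{t-s}f)) + P_s(\Gamma^Z(P_{t-s}f))$ is, by the parabolic comparison in the proof of Theorem \ref{t.3.2}, dominated by $e^{d\sigma^2 s}$ times its value at $s = 0$, which is $2d\sigma^2 P_t(\Gamma(f)) + P_t(\Gamma^Z(f))$. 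Hence
\[
2d\sigma^2\, P_s(\Gamma(P_{t-s}f)) \leqslant 2d\sigma^2\, P_s(\Gamma(P_{t-s}f)) + P_s(\Gamma^Z(P_{t-s}f)) \leqslant e^{d\sigma^2 s}\bigl(2d\sigma^2 P_t(\Gamma(f)) + P_t(\Gamma^Z(f))\bigr),
\]
using non-negativity of $\Gamma^Z$ and of $P_s$.

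The third step is just integration: since $\psi'(s) = 2 P_s(\Gamma(P_{t-s}f)) \leqslant \frac{1}{d\sigma^2} e^{d\sigma^2 s}\bigl(2d\sigma^2 P_t(\Gamma(f)) + P_t(\Gamma^Z(f))\bigr)$, integrating from $0$ to $t$ gives
\[
P_t(f^2) - (P_t f)^2 = \int_0^t \psi'(s)\, ds \leqslant \frac{e^{d\sigma^2 t} - 1}{(d\sigma^2)^2}\bigl(2d\sigma^2 P_t(\Gamma(f)) + P_t(\Gamma^Z(f))\bigr),
\]
which is exactly the claimed inequality.

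The main obstacle is getting the monotonicity in the right direction: Theorem \ref{t.3.2} as stated bounds the gradient at the \emph{final} time by the average at an intermediate time, whereas the Poincaré argument needs the reverse — an upper bound on $P_s(\Gamma(P_{t-s}f))$. The resolution is to go back into the proof of Theorem \ref{t.3.2} and extract the statement that the function $s \mapsto P_s(a(s)\Gamma(P_{t-s}f) + b(s)\Gamma^Z(P_{t-s}f))$ with $a(s) = 2d\sigma^2 e^{d\sigma^2 s}$, $b(s) = e^{d\sigma^2 s}$ is non-decreasing (that is precisely the content of $L\varphi + \partial_s\varphi \geqslant 0$ combined with the parabolic comparison principle applied to intermediate times), and evaluate this monotonicity between $0$ and $s$ rather than $0$ and $t$. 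One should also be careful that the cutoff/Lyapunov argument from the proof of Theorem \ref{t.3.2} — which is needed to justify the parabolic comparison principle globally on the non-compact manifold $\mathbb{H}^d \times \mathbb{R}^{1,d}$ — applies verbatim here, which it does since $W$ from \eqref{e.LyapunovFunction} is the same Lyapunov function and $f \in C_0^\infty$.
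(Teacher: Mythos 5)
Your overall skeleton (the identity $P_t(f^2)-(P_tf)^2 = 2\int_0^t P_s(\Gamma(P_{t-s}f))\,ds$, discarding the non-negative term $P_s(\Gamma^Z(P_{t-s}f))$, and invoking the gradient estimate of Theorem \ref{t.3.2} before integrating in $s$) is the same as the paper's, but the key step where you bound $P_s(\Gamma(P_{t-s}f))$ is flawed, and the ``obstacle'' you describe is illusory. Theorem \ref{t.3.2} can be applied directly, just not with the substitution you tried: apply it to the original $f$ with the time $t$ replaced by $t-s$, which gives the pointwise bound
\[
2d\sigma^2\Gamma(P_{t-s}f)+\Gamma^Z(P_{t-s}f)\leqslant e^{d\sigma^2(t-s)}\left(2d\sigma^2P_{t-s}(\Gamma(f))+P_{t-s}(\Gamma^Z(f))\right),
\]
then apply the positivity-preserving operator $P_s$ and use $P_sP_{t-s}=P_t$; this is exactly the upper bound on $P_s(\Gamma(P_{t-s}f))$ that you need, with the factor $e^{d\sigma^2(t-s)}$, and it is how the paper proceeds. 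Your attempted substitution ($P_{t-s}f$ in place of $f$, time $s$) indeed goes the wrong way, but that reflects a wrong choice of substitution, not a defect of the theorem.

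Your replacement argument through the proof of Theorem \ref{t.3.2} contains a genuine error. The value of $s\mapsto 2d\sigma^2P_s(\Gamma(P_{t-s}f))+P_s(\Gamma^Z(P_{t-s}f))$ at $s=0$ is $2d\sigma^2\Gamma(P_tf)+\Gamma^Z(P_tf)$, not $2d\sigma^2P_t(\Gamma(f))+P_t(\Gamma^Z(f))$ (that is its value at $s=t$), and the monotonicity of $\Psi(s)=e^{d\sigma^2 s}\bigl(2d\sigma^2P_s(\Gamma(P_{t-s}f))+P_s(\Gamma^Z(P_{t-s}f))\bigr)$ must be used between $s$ and $t$ (giving $\Psi(s)\leqslant\Psi(t)$, hence the factor $e^{d\sigma^2(t-s)}$), not between $0$ and $s$, which only produces a lower bound. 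The inequality you actually use,
\[
2d\sigma^2P_s(\Gamma(P_{t-s}f))+P_s(\Gamma^Z(P_{t-s}f))\leqslant e^{d\sigma^2 s}\left(2d\sigma^2P_t(\Gamma(f))+P_t(\Gamma^Z(f))\right),
\]
is unjustified: at $s=0$ it asserts Theorem \ref{t.3.2} with the factor $e^{d\sigma^2 t}$ removed, a strictly stronger statement that the negative-curvature condition of Theorem \ref{t.4.1} does not provide. Your final constant comes out correct only because $\int_0^t e^{d\sigma^2 s}\,ds=\int_0^t e^{d\sigma^2(t-s)}\,ds$. With the correct factor $e^{d\sigma^2(t-s)}$, the rest of your computation (integrating $\psi'(s)=2P_s(\Gamma(P_{t-s}f))$ from $0$ to $t$) goes through verbatim and yields the stated Poincar\'e inequality.
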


\begin{proof}
Since $\Gamma^{Z}\left(f\right):=\left\Vert \nabla_{\xi}f\right\Vert ^{2}\geqslant 0$
and $P_{t}\left(f^{2}\right)-\left(P_{t}f\right)^{2} =2\int_{0}^{t}P_{s}\left(\Gamma\left(P_{t-s}f\right)\right)ds$,
then for $\sigma>0$,

\begin{align*}
 & \int_{0}^{t}P_{s}\left(2d\sigma^{2}\Gamma\left(P_{t-s}f\right)+\Gamma^{Z}\left(P_{t-s}f\right)\right)ds
 \\
 & \geqslant \int_{0}^{t}P_{s}\left(2d\sigma^{2}\Gamma\left(P_{t-s}f\right)\right)ds=d\sigma^{2}\left(P_{t}\left(f^{2}\right)-\left(P_{t}f\right)^{2}\right).
\end{align*}
By Theorem \ref{t.3.2} we have that
\begin{align*}
 & \int_{0}^{t}P_{s}\left(2d\sigma^{2}\Gamma\left(P_{t-s}d\right)+\Gamma^{Z}\left(P_{t-s}f\right)\right)ds
 \\
 & \leqslant \int_{0}^{t}e^{d\sigma^{2}\left(t-s\right)}P_{s}\left(2d\sigma^{2}P_{t-s}\left(\Gamma\left(f\right)\right)+P_{t-s}\left(\Gamma^{Z}\left(f\right)\right)\right)ds
 \\
 & = \left(2d\sigma^{2}P_{t}\left(\Gamma\left(f\right)\right)+P_{t}\left(\Gamma^{Z}\left(f\right)\right)\right)\int_{0}^{t}e^{d\sigma^{2}\left(t-s\right)}ds
 \\
 & = \frac{e^{d\sigma^{2}t}-1}{d\sigma^{2}}\left(2d\sigma^{2}P_{t}\left(\Gamma\left(f\right)\right)+P_{t}\left(\Gamma^{Z}\left(f\right)\right)\right).
\end{align*}
This implies
\[
P_{t}\left(f^{2}\right)-\left(P_{t}f\right)^{2}\leqslant \frac{e^{d\sigma^{2}t}-1}{\left(d\sigma^{2}\right)^{2}}\left(2d\sigma^{2}P_{t}\left(\Gamma\left(f\right)\right)+P_{t}\left(\Gamma^{Z}\left(f\right)\right)\right).
\]

\end{proof}

The next corollary gives us an equivalent estimate to the one in Theorem \ref{t.3.2}. The estimate \eqref{RelGrad} will be similar to the one we will obtain in Theorem \ref{GenGradient} in a more general setting.

\begin{corollary}\label{RelGradThm}
For any $f\in C_{0}^{\infty}\left(\mathbb{H}^d \times \mathbb{R}^{1, d}\right)$, the gradient
estimate
\[
2d\sigma^{2}\Gamma\left(P_{t}f\right)+\Gamma^{Z}\left(P_{t}f\right)\leqslant e^{d\sigma^{2}t}\left(2d\sigma^{2}P_{t}\left(\Gamma(f)\right)+P_{t}\left(\Gamma^{Z}(f)\right)\right),
\]
is equivalent to
\begin{equation}\label{RelGrad}
\Gamma(P_{t}f)\leqslant e^{d\sigma^{2}t}P_{t}\left(\Gamma\left(f\right)\right)+\frac{e^{d\sigma^{2}t}-1}{2d\sigma^{2}}P_{t}\left(\Gamma^{Z}\left(f\right)\right).
\end{equation}

Moreover, one has
\[
\Gamma^{Z}\left(P_{t}f\right)\leqslant P_{t}\left(\Gamma^{Z}\left(f\right)\right).
\]

\end{corollary}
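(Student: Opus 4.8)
The plan is to establish the three claims in sequence, using throughout the curvature--dimension inequalities of Theorem~\ref{t.4.1} and the Lyapunov--function/parabolic--comparison machinery already set up in the proof of Theorem~\ref{t.3.2}.

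First I would dispose of the last assertion $\Gamma^{Z}(P_{t}f)\leqslant P_{t}(\Gamma^{Z}(f))$. Fix $t>0$ and set $\psi(s):=P_{s}(\Gamma^{Z}(P_{t-s}f))$ for $0\leqslant s\leqslant t$. Differentiating (legitimately, by the same cutoff argument with the Lyapunov function $W$ of \eqref{e.LyapunovFunction} used in Theorem~\ref{t.3.2}) gives $\psi'(s)=2P_{s}(\Gamma_{2}^{Z}(P_{t-s}f))\geqslant 0$ by the second inequality of Theorem~\ref{t.4.1}. Hence $\psi$ is non-decreasing, so $\Gamma^{Z}(P_{t}f)=\psi(0)\leqslant\psi(t)=P_{t}(\Gamma^{Z}(f))$; I will also keep the stronger fact that $\psi(s)\leqslant P_{t}(\Gamma^{Z}(f))$ for every $s\in[0,t]$.

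The implication ``\eqref{RelGrad} together with the last assertion $\Rightarrow$ the estimate of Theorem~\ref{t.3.2}'' is then purely algebraic: multiply \eqref{RelGrad} by $2d\sigma^{2}$ and add $\Gamma^{Z}(P_{t}f)\leqslant P_{t}(\Gamma^{Z}(f))$; the coefficients of $P_{t}(\Gamma^{Z}(f))$ combine as $(e^{d\sigma^{2}t}-1)+1=e^{d\sigma^{2}t}$, yielding exactly $2d\sigma^{2}\Gamma(P_{t}f)+\Gamma^{Z}(P_{t}f)\leqslant e^{d\sigma^{2}t}(2d\sigma^{2}P_{t}(\Gamma(f))+P_{t}(\Gamma^{Z}(f)))$.

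For the converse I would \emph{not} manipulate the Theorem~\ref{t.3.2} estimate as a black box: discarding $\Gamma^{Z}(P_{t}f)\geqslant 0$ only produces \eqref{RelGrad} with $e^{d\sigma^{2}t}$ in place of $e^{d\sigma^{2}t}-1$, and recovering the ``$-1$'' genuinely reuses the curvature input. So I would re-run the parabolic comparison argument of Theorem~\ref{t.3.2} with the test function $\varphi(x,s)=a(s)\Gamma(P_{t-s}f)(x)+b(s)\Gamma^{Z}(P_{t-s}f)(x)$, now choosing $a(s)=e^{d\sigma^{2}s}$ and $b(s)=\tfrac{1}{2d\sigma^{2}}(e^{d\sigma^{2}s}-1)$. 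These are non-negative, satisfy $a(0)=1$, $b(0)=0$, and obey $a'-ad\sigma^{2}=0$ and $b'-\tfrac{1}{2}a=0$; hence, exactly as in Theorem~\ref{t.3.2}, Theorem~\ref{t.4.1} gives $L\varphi+\partial_{s}\varphi\geqslant 0$, and the comparison principle yields $\varphi(x,0)\leqslant P_{t}(\varphi(\cdot,t))(x)$, that is $\Gamma(P_{t}f)\leqslant e^{d\sigma^{2}t}P_{t}(\Gamma(f))+\tfrac{e^{d\sigma^{2}t}-1}{2d\sigma^{2}}P_{t}(\Gamma^{Z}(f))$, which is \eqref{RelGrad}. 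Equivalently, one can interpolate via $g(s):=P_{s}(\Gamma(P_{t-s}f))$, for which $g'(s)=2P_{s}(\Gamma_{2}(P_{t-s}f))\geqslant -d\sigma^{2}g(s)-\tfrac{1}{2}\psi(s)\geqslant -d\sigma^{2}g(s)-\tfrac{1}{2}P_{t}(\Gamma^{Z}(f))$, and then integrate the differential inequality for $e^{d\sigma^{2}s}g(s)$ from $0$ to $t$. The only point needing care is the one flagged above — the equivalence is not a formal rearrangement, since passing from the Theorem~\ref{t.3.2} form to \eqref{RelGrad} reuses the curvature--dimension bound — while the analytic subtleties (differentiation under $P_{s}$, the parabolic maximum principle with $W$) are identical to those already handled in Theorem~\ref{t.3.2}.
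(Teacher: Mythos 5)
Your proof is correct, and the converse direction (multiplying \eqref{RelGrad} by $2d\sigma^{2}$, adding $\Gamma^{Z}(P_{t}f)\leqslant P_{t}(\Gamma^{Z}(f))$, which you prove via the monotone functional $\psi(s)=P_{s}(\Gamma^{Z}(P_{t-s}f))$) is exactly the paper's argument. Where you diverge is the forward direction: the paper keeps Theorem \ref{t.3.2} as an input, writing $P_{t}(\Gamma(f))-\Gamma(P_{t}f)=2\int_{0}^{t}P_{s}\left(\Gamma_{2}(P_{t-s}f)\right)ds$, bounding the integrand from below with the curvature--dimension inequality of Theorem \ref{t.4.1} (in the form $\Gamma^{Z}\geqslant-2d\sigma^{2}\Gamma-4\Gamma_{2}$) and from above by integrating the Theorem \ref{t.3.2} estimate at times $t-s$, and then rearranging; you instead bypass Theorem \ref{t.3.2} entirely and rederive \eqref{RelGrad} by re-running the parabolic comparison with the fresh coefficients $a(s)=e^{d\sigma^{2}s}$, $b(s)=\tfrac{1}{2d\sigma^{2}}\left(e^{d\sigma^{2}s}-1\right)$ (or the equivalent Gronwall argument on $g(s)=P_{s}(\Gamma(P_{t-s}f))$ using $\psi(s)\leqslant\psi(t)$), which indeed satisfy $a'-ad\sigma^{2}=0$, $b'-\tfrac{a}{2}=0$, $b(0)=0$, so the boundary terms give precisely \eqref{RelGrad}. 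Both routes are valid and rest on the same analytic machinery (Lyapunov function $W$ and the comparison principle of \cite[Theorem 7.3]{BaudoinEMS2014}); your version is more self-contained and makes the sharp constant $\tfrac{e^{d\sigma^{2}t}-1}{2d\sigma^{2}}$ appear directly from the choice of $b(s)$, whereas the paper's version makes explicit how \eqref{RelGrad} is extracted from the Theorem \ref{t.3.2} bound itself, which is closer in spirit to the word ``equivalent'' in the statement. Your observation that the passage is not a formal rearrangement (a naive discard of $\Gamma^{Z}(P_{t}f)\geqslant0$ loses the $-1$) is accurate and applies equally to the paper's proof, which also reuses the curvature input; since both inequalities are established from the same standing hypotheses, this does not affect the correctness of the claimed equivalence.
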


\begin{proof}
Recall that
\[
P_{t}\left(\Gamma(f)\right)-\Gamma(P_{t}f)=2\int_{0}^{t}P_{s}\left(\Gamma_{2}\left(P_{t-s}f\right)\right)ds.
\]
 Using the curvature dimension inequality $\Gamma^{Z}\left(f\right)\geqslant -2d\sigma^{2}\Gamma(f)-4\Gamma_{2}\left(f\right)$
we have

\begin{align*}
 & \int_{0}^{t}P_{s}\left(2d\sigma^{2}\Gamma\left(P_{t-s}f\right)+\Gamma^{Z}\left(P_{t-s}f\right)\right)ds
 \\
 & \geqslant \int_{0}^{t}P_{s}\left(2d\sigma^{2}\Gamma\left(P_{t-s}f\right)-2d\sigma^{2}\Gamma(P_{t-s}f)-4\Gamma_{2} \left(P_{t-s}f\right)\right)ds
 \\
 & =-2\left(P_{t}\left(\Gamma(f)\right)-\left(\Gamma(P_{t}f)\right)\right).
\end{align*}
On the other hand we have
\begin{align*}
 & \int_{0}^{t}P_{s}\left(2d\sigma^{2}\Gamma\left(P_{t-s}f\right)+\Gamma^{Z}\left(P_{t-s}f\right)\right)ds
\\
 & \leqslant \int_{0}^{t}e^{d\sigma^{2}\left(t-s\right)}P_{s}\left(2d\sigma^{2}P_{t-s}\left(\Gamma\left(f\right)\right)
 +P_{t-s}\left(\Gamma^{Z}\left(f\right)\right)\right)ds
 \\
 & = \left(2d\sigma^{2}P_{t}\left(\Gamma\left(f\right)\right)+P_{t}\left(\Gamma^{Z}\left(f\right)\right)\right)
 \int_{0}^{t}e^{d\sigma^{2}\left(t-s\right)}ds
 \\
 & =\frac{e^{d\sigma^{2}t}-1}{d\sigma^{2}}\left(2d\sigma^{2}P_{t}\left(\Gamma\left(f\right)\right)+P_{t}\left(\Gamma^{Z}\left(f\right)\right)\right).
\end{align*}
Putting these together we have
\[
\Gamma(P_{t}f)-P_{t}\left(\Gamma(f)\right)\leqslant \frac{e^{d\sigma^{2}t}-1}{2d\sigma^{2}}\left(2d\sigma^{2}P_{t}\left(\Gamma\left(f\right)\right)+P_{t}\left(\Gamma^{Z}\left(f\right)\right)\right).
\]
A rearranging of this inequality gives us
\[
\Gamma(P_{t}f)\leqslant e^{d\sigma^{2}t}P_{t}\left(\Gamma\left(f\right)\right)+\frac{e^{d\sigma^{2}t}-1}{2d\sigma^{2}}P_{t}\left(\Gamma^{Z}\left(f\right)\right).
\]
Conversely, assume $\Gamma(P_{t}f)\leqslant e^{d\sigma^{2}t}P_{t}\left(\Gamma\left(f\right)\right)+\frac{e^{d\sigma^{2}t}-1}{2d\sigma^{2}}P_{t}\left(\Gamma^{Z}\left(f\right)\right)$
then
\begin{align*}
 & 2d\sigma^{2}\Gamma(P_{t}f)+\Gamma^{Z}\left(P_{t}f\right)
\\
 & \leqslant  2d\sigma^{2}\left(e^{d\sigma^{2}t}P_{t}\left(\Gamma\left(f\right)\right)+\frac{e^{d\sigma^{2}t}-1}{2d\sigma^{2}}P_{t}\left(\Gamma^{Z}\left(f\right)\right)\right)+\Gamma^{Z}\left(P_{t}f\right)
 \\
 & =e^{d\sigma^{2}t}\left(2d\sigma^{2}P_{t}\left(\Gamma(f)\right)+P_{t}\left(\Gamma^{Z}(f)\right)\right)+\Gamma^{Z}\left(P_{t}f\right)-P_{t}\left(\Gamma^{Z}\left(f\right)\right)
\\
 & \leqslant e^{d\sigma^{2}t}\left(2d\sigma^{2}P_{t}\left(\Gamma(f)\right)+P_{t}\left(\Gamma^{Z}(f)\right)\right)+0.
\end{align*}
 The last inequality is due to $\Gamma^{Z}\left(P_{t}f\right)\leqslant P_{t}\left(\Gamma^{Z}\left(f\right)\right)$.
To see this, consider the functional $\phi(s)=P_{s}\left(\Gamma^{Z}\left(P_{t-s}f\right)\right)$
for $0\leqslant s\leqslant t$ . A calculation shows that
\[
\Phi^{\prime}(s)=2P_{s}\left(\Gamma_{2}^{Z}\left(P_{t-s}f\right)\right)\geqslant 0,
\]
which shows $\phi(s)$ is increasing, so that $0\leqslant \phi(t)-\phi(0)=P_{t}\left(\Gamma^{Z}\left(f\right)\right)-\Gamma^{Z}\left(P_{t}f\right)$.
\end{proof}

\section{Gradient bounds for a general Kolmogorov diffusion}

We now study generalizations of Kolmogorov type diffusions and prove gradient bounds using $\Gamma$-calculus and coupling techniques. We give examples and give other generalizations along the way.

We start with an outline of this section. In Section \ref{5.1}, we prove gradient bounds for diffusions of the type $\mathbf{X}_{t}=(X_t,\int_0^t\sigma(X_s) ds)$ for $\sigma:\mathbb{R}^k\to\mathbb{R}^k$, where $X_t$ is a Markov process on $\mathbb{R}^k$ satisfying a $\Gamma_2$ lower bound. We show that a generalized curvature-dimension condition for the generator of $\mathbf{X}_{t}$ is satisfied similarly to Theorem \ref{t.4.1}. In Section \ref{5.1.1}, we show that the results in Section \ref{5.1} are valid when $X_t$ is a Brownian motion on a complete Riemannian manifold  isometrically embedded in $\mathbb{R}^{k}$ for some $k$. In Section \ref{5.2}, we use coupling techniques to prove gradient bounds when $X_t$ is assumed to live in a Riemannian manifold $M$. In this section $M$ is not necessarily embedded in some $\mathbb{R}^k$. In Section \ref{5.3}, we generalize the results in Section \ref{5.2} to iterated Kolmogorov diffusions. Finally in Section \ref{5.4}, we prove gradient bounds when $X_t$ is a hypoelliptic Brownian motion on the Heisenberg group.

 \subsection{$\Gamma$-calculus}\label{5.1}

We now study the diffusion $\mathbf{X}_{t}=\left(X_{t},\int_{0}^{t}\sigma\left(X_{s}\right)ds\right)$, where $X_{t}$ is a Markov process in $\mathbb{R}^{k}$ whose generator is given by
\[
L=\sum_{i=1}^{k}V_{i}^{2}+V_{0}.
\]
Here $V_i, i=0,\dots,k$ are smooth vector fields, and we assume that $\sigma: \mathbb{R}^{k}\to\mathbb{R}^{k}$ is a $C^{1}$ map such that

\begin{equation}\label{C-sigma1}
C_{\sigma}:= \sup_{p\in \mathbb{R}^k}\left(\sum_{i,j=1}^{d}(V_{i}\sigma_{j}(p))^{2}\right)^{\frac{1}{2}}<\infty.
\end{equation}
We consider functions on $\mathbb{R}^{k}\times\mathbb{R}^{k}$ with
$f(p,\xi)$, $p,\xi\in\mathbb{R}^{k}$. The generator for $\mathbf{X}_{t}$
is given by
\[
\mathcal{L}=L+\sum_{i=1}^{k}\sigma_{i}(p)\frac{\partial}{\partial\xi_{i}}.
\]
We first prove a generalized curvature-dimension inequality for $\mathcal{L}$
given some assumptions on $L$. Let $\Gamma(f)$ be the \emph{carr\'e du
champ} operator for $\mathcal{L}$, while $\Gamma^{L}(f)$ will be
associated with $L$. Let $\Gamma_{2}(f)$ and $\Gamma_{2}^{L}(f)$ be
the corresponding iterated \emph{carr\'e du champ} operators.

We define a symmetric, first-order differential bilinear form  $\Gamma^{Z}:C^{\infty}\left(\mathbb{R}^{k}\times\mathbb{R}^{k}\right)\times C^{\infty}\left(\mathbb{R}^{k}\times\mathbb{R}^{k}\right)\to\mathbb{R}$
by
\[
\Gamma^{Z}(f)=\left\Vert \nabla_{\xi}f\right\Vert ^{2}
\]
for any $f\in C^{\infty}\left(\mathbb{R}^{k}\times\mathbb{R}^{k}\right)$.
\begin{theorem}[Curvature-dimension inequality]
\label{Gen CD inq} If the operator
$L$ satisfies
\[
\Gamma_{2}^{L}(f)\geqslant \rho\Gamma^{L}(f),
\]
then the operator $\mathcal{L}$ satisfies the following generalized
curvature-dimension inequality for any $f\in C^{\infty}\left(\mathbb{R}^{k}\times\mathbb{R}^{k}\right)$,
\begin{align*}
\Gamma_{2}(f) & \geqslant\left(\rho-\frac{C_{\sigma}}{2}\right)\Gamma(f)-\frac{C_{\sigma}}{2}\Gamma^{Z}(f),\\
\Gamma_{2}^{Z}(f) & \geqslant 0.
\end{align*}

\end{theorem}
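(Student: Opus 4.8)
The plan is to mimic the structure of the proof of Theorem \ref{t.4.1}, carrying out the explicit computation of the carr\'e du champ and its iterate for the operator $\mathcal{L} = L + \sum_{i=1}^k \sigma_i(p)\,\partial_{\xi_i}$, and then estimating. First I would note that since $\nabla_\xi$ commutes with $L$ (the vector fields $V_i$, $V_0$ act only on the $p$-variable and the coefficients $\sigma_i$ depend only on $p$), one has $\Gamma(f) = \Gamma^L(f)$; indeed the extra drift term $\sum_i \sigma_i(p)\partial_{\xi_i}$ is a first-order operator and contributes nothing to the carr\'e du champ, so $\Gamma(f) = \frac{1}{2}(\mathcal{L}f^2 - 2f\mathcal{L}f) = \Gamma^L(f) = \sum_{i=1}^k (V_i f)^2$, where now $V_i$ is understood as acting on $f(p,\xi)$ through the $p$-slot only.

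Next I would compute $\Gamma_2(f) = \frac{1}{2}(\mathcal{L}\Gamma(f) - 2\Gamma(f,\mathcal{L}f))$. Writing $\mathcal{L} = L + Z$ with $Z = \sum_i \sigma_i(p)\partial_{\xi_i}$, and using $\Gamma(f) = \Gamma^L(f)$, we get
\begin{align*}
\Gamma_2(f) &= \tfrac{1}{2}L\Gamma^L(f) + \tfrac{1}{2}Z\Gamma^L(f) - \Gamma^L(f, Lf) - \Gamma^L(f, Zf) \\
&= \Gamma_2^L(f) - \Gamma^L(f, Zf),
\end{align*}
since $Z\Gamma^L(f) = 0$ because $\Gamma^L(f) = \sum_i (V_i f)^2$ involves only $p$-derivatives of $f$ while $Z$ differentiates in $\xi$ only — wait, that is not quite right since $V_i f$ still depends on $\xi$. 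More carefully, $Z\Gamma^L(f) = 2\sum_i (V_i f)(V_i Zf - [V_i,Z]f) = 2\sum_i (V_i f)(Z V_i f)$ since $[V_i,Z]=0$ (the vector fields $V_i$ act on $p$, $Z$ has $p$-dependent coefficients but $\xi$-derivatives, so their bracket is $\sum_j (V_i\sigma_j)\partial_{\xi_j}$ — this is \emph{not} zero). So $\tfrac12 Z\Gamma^L(f) - \Gamma^L(f,Zf) = \sum_i (V_if)\bigl(ZV_if - V_iZf\bigr) = -\sum_i (V_i f)[V_i,Z]f = -\sum_{i,j}(V_i f)(V_i\sigma_j)(\partial_{\xi_j}f)$. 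Hence
\[
\Gamma_2(f) = \Gamma_2^L(f) - \sum_{i,j=1}^{k}(V_i f)(V_i\sigma_j)(\partial_{\xi_j}f).
\]
Now I apply the hypothesis $\Gamma_2^L(f) \geqslant \rho\,\Gamma^L(f) = \rho\,\Gamma(f)$, and bound the cross term via Cauchy--Schwarz: for each $i$, $\bigl|\sum_j (V_i\sigma_j)(\partial_{\xi_j}f)\bigr| \leqslant \bigl(\sum_j (V_i\sigma_j)^2\bigr)^{1/2}\|\nabla_\xi f\|$, and then $\sum_i |V_i f|\cdot(\sum_j(V_i\sigma_j)^2)^{1/2}\|\nabla_\xi f\| \leqslant \bigl(\sum_i (V_i f)^2\bigr)^{1/2}\bigl(\sum_{i,j}(V_i\sigma_j)^2\bigr)^{1/2}\|\nabla_\xi f\| \leqslant C_\sigma \Gamma(f)^{1/2}\Gamma^Z(f)^{1/2}$ using \eqref{C-sigma1}. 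The elementary inequality $ab \leqslant \frac12(a^2+b^2)$ then gives $C_\sigma\Gamma(f)^{1/2}\Gamma^Z(f)^{1/2} \leqslant \frac{C_\sigma}{2}\Gamma(f) + \frac{C_\sigma}{2}\Gamma^Z(f)$, yielding $\Gamma_2(f) \geqslant (\rho - \frac{C_\sigma}{2})\Gamma(f) - \frac{C_\sigma}{2}\Gamma^Z(f)$, as claimed.

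For the second inequality, I would compute $\Gamma_2^Z(f) = \frac12(\mathcal{L}\Gamma^Z(f) - 2\Gamma^Z(f,\mathcal{L}f))$ directly. Since $\Gamma^Z(f) = \|\nabla_\xi f\|^2 = \sum_j (\partial_{\xi_j}f)^2$ and $\nabla_\xi$ commutes with $\mathcal{L}$ up to the bracket terms — actually here $[\partial_{\xi_j}, Z] = 0$ since the $\sigma_i$ depend only on $p$, and $[\partial_{\xi_j}, L] = 0$ — the standard computation gives $\Gamma_2^Z(f) = \sum_{j}\Gamma^L(\partial_{\xi_j}f) = \sum_{i,j}(V_i\partial_{\xi_j}f)^2 \geqslant 0$, exactly analogous to the end of the proof of Theorem \ref{t.4.1}. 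I expect the main obstacle to be bookkeeping the commutators $[V_i, Z]$ correctly and keeping track of which operators see $p$ versus $\xi$; the analytic content is minimal once the algebra is right. A secondary point worth a remark is whether one needs $L$ self-adjoint or symmetric with respect to some measure for the manipulations $\Gamma_2 = \frac12 L\Gamma - \Gamma(\cdot, L\cdot)$ to be the relevant object — but since everything here is a pointwise differential-operator identity (no integration by parts is used), smoothness of $f$ and of the $V_i$, $\sigma$ suffices, so the proof goes through as a purely algebraic computation followed by Cauchy--Schwarz.
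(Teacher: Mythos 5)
Your proposal is correct and follows essentially the same route as the paper: compute $\Gamma=\Gamma^{L}$, derive $\Gamma_{2}(f)=\Gamma_{2}^{L}(f)-\sum_{i,j}(V_{i}f)(V_{i}\sigma_{j})\partial_{\xi_{j}}f$ (your mid-proof self-correction via the commutator $[V_{i},Z]$ lands on exactly the paper's identity), then apply the hypothesis, Cauchy--Schwarz with $C_{\sigma}$, and $ab\leqslant\frac12(a^{2}+b^{2})$, and finally observe $\Gamma_{2}^{Z}(f)=\sum_{i,j}(V_{i}\partial_{\xi_{j}}f)^{2}\geqslant0$. The only cosmetic issue is that the first displayed equality (dropping $\tfrac12 Z\Gamma^{L}(f)$) is stated before being retracted; in a final write-up just present the corrected computation.
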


\begin{proof}
A simple calculation of the \emph{carr\'e du champ} of $\mathcal{L}$ and $L$
shows that
\begin{align*}
\Gamma(f) & :=\frac{1}{2}\left(\mathcal{L}f^{2}-2f\mathcal{L}f\right)=\sum_{j=1}^{k}\left(V_{i}f\right)^{2},\\
\Gamma^{L}(f) & :=\frac{1}{2}\left(Lf^{2}-2fLf\right)=\sum_{j=1}^{k}\left(V_{i}f\right)^{2}.
\end{align*}
More computations of the iterated carr\'e du champ $\Gamma_{2}(f):=\frac{1}{2}\left(\mathcal{L}\Gamma(f)-2\Gamma(f,\mathcal{L}f)\right)$
show that
\[
\Gamma_{2}(f)=\Gamma_{2}^{L}(f)-\sum_{i=1}^{k}\sum_{j=1}^{k}\left(V_{i}f\right)(V_{i}\sigma_{j})\frac{\partial f}{\partial\xi_{j}}.
\]
By the assumption on $\Gamma_{2}^{L}(f)$ we have
\[
\Gamma_{2}(f)\geqslant \rho\Gamma(f)-\sum_{i=1}^{k}\sum_{j=1}^{k}\left(V_{i}f\right)(V_{i}\sigma_{j})\frac{\partial f}{\partial\xi_{j}}.
\]
Using the Cauchy-Schwarz inequality, the bound on $\sigma$ and the elementary estimate
$ab\leqslant \frac{a^{2}}{2}+\frac{b^{2}}{2}$, we see that
\begin{align*}
\sum_{i,j=1}^{k}\left(V_{i}f\right)(V_{i}\sigma_{j})\frac{\partial f}{\partial\xi_{j}} & \leqslant\left(\sum_{i,j=1}^{k}(V_{i}\sigma_{j})^{2}\right)^{\frac{1}{2}}\left(\sum_{i,j=1}^{k}\left(V_{i}f\right)^{2}\left(\frac{\partial f}{\partial\xi_{j}}\right)^{2}\right)^{\frac{1}{2}}\\
 & \leqslant C_{\sigma}\left(\Gamma(f)\right)^{\frac{1}{2}}\left(\Gamma^{Z}(f)\right)^{\frac{1}{2}}\\
 & \leqslant\frac{C_{\sigma}}{2}\left(\Gamma(f)\right)+\frac{C_{\sigma}}{2}\left(\Gamma^{Z}(f)\right).
\end{align*}
Using this inequality with the previous one give us the desired first curvature-dimension inequality. The second inequality we want to prove is a lower bound on
\[
\Gamma_{2}^{Z}(f):=\frac{1}{2}\left(\mathcal{L}\Gamma^{Z}(f)-2\Gamma^{Z}\left(f,\mathcal{L}f\right)\right),
\]
for which routine computations shows that
\[
\Gamma_{2}^{Z}(f)=\sum_{i,j=1}^{k}\left(V_{i}\frac{\partial f}{\partial\xi_{j}}\right)^{2}\geqslant 0,
\]
as needed.
\end{proof}
In order to prove a gradient bound for the heat semigroup we must make the following assumption
on the existence of a Lyapunov function for the operator $\mathcal{L}$. As in Section \ref{s.RelativDiff}, we say that a smooth function $W:\mathbb{R}^{k}\times\mathbb{R}^{k}\to\mathbb{R}$ is a \emph{Lyapunov function} on $\mathbb{R}^k$ for $\mathcal{L}$ if
\[
\mathcal{L}W\leqslant CW,
\]
for some $C>0$. The existence of a Lyapunov function immediately  implies that $\mathcal{L}$ is the generator of a Markov semigroup $(P_t)_{t\geq 0}$ that uniquely solves the heat equation in $L^\infty$.

Throughout this section, we will need the following assumption.

\begin{assumption}\label{Ref1}
There exists a Lyapunov function $W:\mathbb{R}^{k}\times\mathbb{R}^{k}\to\mathbb{R}$
such that $W\geqslant 1$, $\sqrt{\Gamma (W)}+\sqrt{\Gamma^{Z}(W)}\leqslant CW$,
 for some constant $C>0$ and $\left\{ W\leqslant m\right\} $
is compact for every $m$. Here $\Gamma$ is applied to the first
coordinate of $W$ while $\Gamma^{Z}$ is applied to the second coordinate.
\end{assumption}

We are now ready to prove the main result of this section.

\begin{theorem}[Gradient estimate]\label{GenGrad-Gamma} Suppose Assumption \ref{Ref1} holds and let $P_t$ be the heat semigroup associated to $\mathcal{L}$. If $C_\sigma > 2\rho$ and the operator $L$ satisfies
\[
\Gamma_{2}^{L}(f)\geqslant \rho\Gamma^{L}(f),
\]
then for any $f\in C_{0}^{\infty}\left(\mathbb{R}^{k}\times\mathbb{R}^{k}\right)$, $t \geqslant 0$ and $x \in \mathbb{R}^{k}\times\mathbb{R}^{k}$
\begin{align*}
 & \Gamma\left(P_{t}f\right)(x)+\frac{C_{\sigma}}{C_{\sigma}-2\rho}\Gamma^{Z}\left(P_{t}f\right)(x)\\
 & \leqslant e^{\left(C_{\sigma}-2\rho\right)t}\left(P_{t}\left(\Gamma\left(f\right)\right)\left(x\right)+\frac{C_{\sigma}}{C_{\sigma}-2\rho}P_{t}\left(\Gamma^{Z}\left(f\right)\right)\left(x\right)\right).
\end{align*}

\end{theorem}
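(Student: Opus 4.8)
The plan is to mimic the proof of Theorem \ref{t.3.2}, replacing the explicit hyperbolic constants $(d-1)$, $d\sigma^2$ by the abstract curvature lower bound $\rho$ and the contraction constant $C_\sigma$ coming from Theorem \ref{Gen CD inq}. Fix $t>0$. For $0\leqslant s\leqslant t$ and $x\in\mathbb{R}^k\times\mathbb{R}^k$ set
\[
\varphi_1(x,s):=\Gamma\left(P_{t-s}f\right)(x),\qquad \varphi_2(x,s):=\Gamma^Z\left(P_{t-s}f\right)(x),
\]
so that $L\varphi_1+\partial_s\varphi_1=2\Gamma_2(P_{t-s}f)$ and $\mathcal{L}\varphi_2+\partial_s\varphi_2=2\Gamma_2^Z(P_{t-s}f)$. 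Looking for $\varphi:=a(s)\varphi_1+b(s)\varphi_2$ with $a,b\geqslant 0$ smooth such that $\mathcal{L}\varphi+\partial_s\varphi\geqslant 0$, the curvature-dimension inequality of Theorem \ref{Gen CD inq} gives
\[
\mathcal{L}\varphi+\partial_s\varphi\geqslant \left(a'+2a\left(\rho-\tfrac{C_\sigma}{2}\right)\right)\Gamma(P_{t-s}f)+\left(b'-aC_\sigma\right)\Gamma^Z(P_{t-s}f),
\]
where I have used $\Gamma_2^Z\geqslant 0$ for the second term. Hence it suffices to choose $a,b$ so that both coefficients vanish: $a'=(C_\sigma-2\rho)a$ and $b'=C_\sigma a$. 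With $a(s)=e^{(C_\sigma-2\rho)s}$ one gets $b'(s)=C_\sigma e^{(C_\sigma-2\rho)s}$, which integrates (using $C_\sigma-2\rho>0$, the hypothesis) to $b(s)=\frac{C_\sigma}{C_\sigma-2\rho}e^{(C_\sigma-2\rho)s}$ up to an additive constant; choosing that constant to be $0$ keeps $b\geqslant 0$. This is exactly the normalization appearing in the statement: $b(0)/a(0)=\frac{C_\sigma}{C_\sigma-2\rho}$.

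Next I invoke the parabolic comparison principle exactly as in the proof of Theorem \ref{t.3.2}: since $\mathcal{L}\varphi+\partial_s\varphi\geqslant 0$, the function $\varphi(\cdot,s)$ is a subsolution of the backward heat equation, so along the semigroup $s\mapsto P_s(\varphi(\cdot,s))$ is nondecreasing, giving
\[
P_t\left(\varphi(\cdot,t)\right)(x)\geqslant \varphi(x,0).
\]
This step requires justification because $\mathbb{R}^k\times\mathbb{R}^k$ is noncompact and $\mathcal{L}$ is only hypoelliptic; this is precisely where Assumption \ref{Ref1} enters. Using the Lyapunov function $W$ with $W\geqslant 1$, $\sqrt{\Gamma(W)}+\sqrt{\Gamma^Z(W)}\leqslant CW$ and compact sublevel sets, one runs the cutoff argument as in \cite[Theorem 7.3]{BaudoinEMS2014} (the same reference already used in the proof of Theorem \ref{t.3.2}) to upgrade the local comparison to the global inequality above. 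Finally, unraveling the two sides:
\[
\varphi(x,0)=a(0)\Gamma(P_tf)(x)+b(0)\Gamma^Z(P_tf)(x)=\Gamma(P_tf)(x)+\frac{C_\sigma}{C_\sigma-2\rho}\Gamma^Z(P_tf)(x),
\]
\[
P_t\left(\varphi(\cdot,t)\right)(x)=a(t)P_t\left(\Gamma(f)\right)(x)+b(t)P_t\left(\Gamma^Z(f)\right)(x)=e^{(C_\sigma-2\rho)t}\left(P_t\left(\Gamma(f)\right)(x)+\frac{C_\sigma}{C_\sigma-2\rho}P_t\left(\Gamma^Z(f)\right)(x)\right),
\]
and combining the last two displays with the comparison inequality yields the claimed estimate.

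The routine part is the ODE bookkeeping for $a(s)$, $b(s)$ and the arithmetic identifying the endpoint values; no surprises there. The step that genuinely needs care — and the one I expect to be the main obstacle — is the justification of the parabolic comparison principle on the noncompact, merely hypoelliptic state space, i.e.\ making the cutoff/Lyapunov argument rigorous so that one can interchange $P_t$ with the differential inequality and pass to the limit in the cutoffs. The conditions bundled into Assumption \ref{Ref1} ($W\geqslant 1$, gradient bound $\sqrt{\Gamma(W)}+\sqrt{\Gamma^Z(W)}\leqslant CW$, compact sublevel sets, and $\mathcal{L}W\leqslant CW$) are tailored exactly to push this argument through following \cite{BaudoinEMS2014}, and I would cite that reference for the technical details rather than reproduce them. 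A secondary, minor point is that the estimate is first proved for $f\in C_0^\infty$ so all the quantities $\Gamma(P_{t-s}f)$, $\Gamma^Z(P_{t-s}f)$ are smooth and the manipulations are legitimate; since the statement is already restricted to $f\in C_0^\infty(\mathbb{R}^k\times\mathbb{R}^k)$, no density/approximation argument is needed here.
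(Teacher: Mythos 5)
Your proposal is correct and follows essentially the same route as the paper's own proof: the same functional $\varphi=a(s)\Gamma(P_{t-s}f)+b(s)\Gamma^Z(P_{t-s}f)$, the same choice $a(s)=e^{(C_\sigma-2\rho)s}$, $b(s)=\frac{C_\sigma}{C_\sigma-2\rho}e^{(C_\sigma-2\rho)s}$ forced by Theorem \ref{Gen CD inq}, and the same Lyapunov/cutoff justification of the parabolic comparison principle via \cite[Theorem 7.3]{BaudoinEMS2014}. The only blemish is the typo $L\varphi_1$ in place of $\mathcal{L}\varphi_1$ in the first evolution equation, which does not affect the argument.
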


\begin{proof}
We fix $t>0$ throughout the proof. For $0<s<t$ and $x=(p,\xi)\in\mathbb{R}^{k}\times\mathbb{R}^{k}$
we denote
\begin{align*}
\varphi_{1}\left(x,s\right) & :=\Gamma\left(P_{t-s}f\right)\left(x\right),\\
\varphi_{2}\left(x,s\right) & :=\Gamma^{Z}\left(P_{t-s}f\right)\left(x\right).
\end{align*}
Then
\begin{align*}
\mathcal{L}\varphi_{1}+\frac{\partial\varphi_{1}}{\partial s} & =2\Gamma_{2}\left(P_{t-s}f\right),\\
\mathcal{L}\varphi_{2}+\frac{\partial\varphi_{2}}{\partial s} & =2\Gamma_{2}^{Z}\left(P_{t-s}f\right).
\end{align*}
Now we would like to find two non-negative smooth functions $a(s)$
and $b(s)$ such that for
\[
\varphi(x,s):=a(s)\varphi_{1}(x,s)+b(s)\varphi_{2}(x,s),
\]
we have
\[
\mathcal{L}\varphi+\frac{\partial\varphi}{\partial s}\geqslant 0.
\]
Then by Theorem \ref{Gen CD inq} we have
\begin{align*}
 & \mathcal{L}\varphi+\frac{\partial\varphi}{\partial s}=\\
 & a^{\prime}(s)\Gamma\left(P_{t-s}f\right)+b^{\prime}(s)\Gamma^{Z}\left(P_{t-s}f\right)+2a(s)\Gamma_{2}\left(P_{t-s}f\right)+2b(s)\Gamma_{2}^{Z}\left(P_{t-s}f\right)\geqslant \\
 & a^{\prime}(s)\Gamma\left(P_{t-s}f\right)+b^{\prime}(s)\Gamma^{Z}\left(P_{t-s}f\right)+2a(s)\left(\left(\rho-\frac{C_{\sigma}}{2}\right)\Gamma\left(P_{t-s}f\right)-\frac{C_{\sigma}}{2}\Gamma^{Z}\left(P_{t-s}f\right)\right)=\\
 & \left(a^{\prime}(s)+a(s)\left(2\rho-C_{\sigma}\right)\right)\Gamma\left(P_{t-s}f\right)+\left(b^{\prime}(s)-a(s)C_{\sigma}\right)\Gamma^{Z}\left(P_{t-s}f\right).
\end{align*}
One can easily see that if
\[
a(s)=e^{\left(C_{\sigma}-2\rho\right)s} \text{ and }b(s)=\frac{C_{\sigma}}{C_{\sigma}-2\rho}e^{\left(C_{\sigma}-2\rho\right)s},
\]
the last expression is $0$. Using the existence of the Lyapunov function
$W$ and a cutoff argument as in  \cite[Theorem 7.3]{BaudoinEMS2014}, we deduce
from a parabolic comparison principle,
\[
P_{t}\left(\varphi\left(\cdot,t\right)\right)\left(x\right)\geqslant\varphi\left(x,0\right).
\]
Observe that
\[
\varphi\left(x,0\right)=a(0)\varphi_{1}\left(x,0\right)+b(0)\varphi_{2}(x,0)=\Gamma\left(P_{t}f\right)(x)+\frac{C_{\sigma}}{C_{\sigma}-2\rho}\Gamma^{Z}\left(P_{t}f\right)(x),
\]
while
\begin{align*}
P_{t}\left(\varphi\left(\cdot,t\right)\right)\left(x\right) & =a(t)P_{t}\left(\Gamma\left(f\right)\right)\left(x\right)+b(t)P_{t}\left(\Gamma^{Z}\left(f\right)\right)\left(x\right)\\
 & =e^{\left(C_{\sigma}-2\rho\right)t}\left(P_{t}\left(\Gamma\left(f\right)\right)\left(x\right)+\frac{C_{\sigma}}{C_{\sigma}-2\rho}P_{t}\left(\Gamma^{Z}\left(f\right)\right)\left(x\right)\right).
\end{align*}

\end{proof}

\begin{corollary}[Poincar\'e type inequality] If $C_\sigma > 2\rho$ then
for any $f\in C_{0}^{\infty}\left(\mathbb{R}^{k}\times\mathbb{R}^{k}\right)$ and $t \geqslant 0$
\[
P_{t}\left(f^{2}\right)-\left(P_{t}f\right)^{2}\leqslant 2\frac{e^{\left(C_{\sigma}-2\rho\right)t}-1}{C_{\sigma}-2\rho}\left(P_{t}\left(\Gamma\left(f\right)\right)+\frac{C_{\sigma}}{C_{\sigma}-2\rho}P_{t}\left(\Gamma^{Z}\left(f\right)\right)\right).
\]

\end{corollary}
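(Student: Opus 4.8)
The plan is to combine the standard variance‑interpolation identity with the gradient estimate of Theorem \ref{GenGrad-Gamma}, following the same scheme as the Poincar\'e inequality for the relativistic diffusion in Section \ref{s.RelativDiff}. First I would record the interpolation formula
\[
P_{t}\left(f^{2}\right)-\left(P_{t}f\right)^{2}=2\int_{0}^{t}P_{s}\left(\Gamma\left(P_{t-s}f\right)\right)\,ds,
\]
obtained by differentiating $s\mapsto P_{s}\!\left((P_{t-s}f)^{2}\right)$ and using $\frac{1}{2}\mathcal{L}g^{2}-g\mathcal{L}g=\Gamma(g)$. Since $f\in C_{0}^{\infty}$, the function $\Gamma(f)$ is compactly supported and all the quantities appearing are finite, so the differentiation and the interchange of $P_{s}$ with the time integral are legitimate (handled via Assumption \ref{Ref1} and the Lyapunov/cutoff machinery exactly as in the proof of Theorem \ref{GenGrad-Gamma}).

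Next, since $\Gamma^{Z}(g)=\left\Vert\nabla_{\xi}g\right\Vert^{2}\geqslant 0$ and $P_{s}$ is positivity preserving, I would estimate
\[
\int_{0}^{t}P_{s}\left(\Gamma\left(P_{t-s}f\right)\right)\,ds\leqslant \int_{0}^{t}P_{s}\left(\Gamma\left(P_{t-s}f\right)+\frac{C_{\sigma}}{C_{\sigma}-2\rho}\Gamma^{Z}\left(P_{t-s}f\right)\right)\,ds,
\]
and then apply Theorem \ref{GenGrad-Gamma} (with the function $f$ and time $t-s$, pointwise in $x$) to replace the integrand by $e^{\left(C_{\sigma}-2\rho\right)(t-s)}\left(P_{t-s}\left(\Gamma(f)\right)+\frac{C_{\sigma}}{C_{\sigma}-2\rho}P_{t-s}\left(\Gamma^{Z}(f)\right)\right)$. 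Composing with $P_{s}$ and using the semigroup property $P_{s}P_{t-s}=P_{t}$ pulls $P_{t}\left(\Gamma(f)\right)$ and $P_{t}\left(\Gamma^{Z}(f)\right)$ out of the integral, leaving $\int_{0}^{t}e^{\left(C_{\sigma}-2\rho\right)(t-s)}\,ds=\frac{e^{\left(C_{\sigma}-2\rho\right)t}-1}{C_{\sigma}-2\rho}$, which is finite and positive precisely because $C_{\sigma}>2\rho$. Multiplying by the factor $2$ from the interpolation identity yields the stated bound.

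The argument is essentially routine given Theorem \ref{GenGrad-Gamma}: no new estimate is required, the only inputs being the sign of $\Gamma^{Z}$ and the semigroup property. The one point deserving a little attention is the justification of the interpolation identity and the Fubini‑type interchange for $C_{0}^{\infty}$ data whose images $P_{t-s}f$ are no longer compactly supported; this is exactly the same technical issue already dealt with in Sections \ref{s.RelativDiff} and \ref{5.1}, so I would simply invoke it there rather than redo it.
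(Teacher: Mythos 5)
Your proposal is correct and follows essentially the same route as the paper: the interpolation identity for the variance, adding the nonnegative term $\frac{C_{\sigma}}{C_{\sigma}-2\rho}\Gamma^{Z}\left(P_{t-s}f\right)$ (using $C_{\sigma}>2\rho$), applying Theorem \ref{GenGrad-Gamma} at time $t-s$, and using the semigroup property to evaluate the resulting exponential integral. No substantive difference from the paper's argument.
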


\begin{proof}
Since $\Gamma^{Z}\left(f\right):=\left\Vert \nabla_{\xi}f\right\Vert ^{2}\geqslant 0$
and $P_{t}\left(f^{2}\right)-\left(P_{t}f\right)^{2}=2\int_{0}^{t}P_{s}\left(\Gamma\left(P_{t-s}f\right)\right)ds$,
then
\begin{align*}
 & \int_{0}^{t}P_{s}\left(\Gamma\left(P_{t-s}f\right)+\frac{C_{\sigma}}{C_{\sigma}-2\rho}\Gamma^{Z}\left(P_{t-s}f\right)\right)ds\geqslant\\
 & \frac{1}{2}\int_{0}^{t}2P_{s}\left(\Gamma\left(P_{t-s}f\right)\right)ds=\frac{1}{2}\left(P_{t}\left(f^{2}\right)-\left(P_{t}f\right)^{2}\right).
\end{align*}
By Theorem \ref{GenGrad-Gamma} we have that
\begin{align*}
 & \int_{0}^{t}P_{s}\left(\Gamma\left(P_{t-s}f\right)+\frac{C_{\sigma}}{C_{\sigma}-2\rho}\Gamma^{Z}\left(P_{t-s}f\right)\right)ds\leqslant\\
 & \int_{0}^{t}e^{\left(-2\rho+C_{\sigma}\right)(t-s)}P_{s}\left(P_{t-s}\left(\Gamma\left(f\right)\right)+\frac{C_{\sigma}}{C_{\sigma}-2\rho}P_{t-s}\left(\Gamma^{Z}\left(f\right)\right)\right)ds=\\
 & \left(P_{t}\left(\Gamma\left(f\right)\right)+\frac{C_{\sigma}}{C_{\sigma}-2\rho}P_{t}\left(\Gamma^{Z}\left(f\right)\right)\right)\int_{0}^{t}e^{\left(C_{\sigma}-2\rho\right)(t-s)}ds=\\
 & \frac{e^{\left(C_{\sigma}-2\rho\right)t}-1}{C_{\sigma}-2\rho}\left(P_{t}\left(\Gamma\left(f\right)\right)+\frac{C_{\sigma}}{C_{\sigma}-2\rho}P_{t}\left(\Gamma^{Z}\left(f\right)\right)\right).
\end{align*}
So that
\[
P_{t}\left(f^{2}\right)-\left(P_{t}f\right)^{2}\leqslant 2\frac{e^{\left(C_{\sigma}-2\rho\right)t}-1}{C_{\sigma}-2\rho}\left(P_{t}\left(\Gamma\left(f\right)\right)+\frac{C_{\sigma}}{C_{\sigma}-2\rho}P_{t}\left(\Gamma^{Z}\left(f\right)\right)\right).
\]

\end{proof}

\subsection{Application to Riemannian manifolds}\label{5.1.1}
To illustrate the results in Section \ref{5.1} we study a large  class of examples based on Brownian motion in a Riemannian manifold. For general background on general theory of stochastic analysis on manifolds we refer to \cite{BismutBook1984, ElworthyBook, HsuEltonBook}. Consider a complete Riemannian manifold $\left(M,g\right)$ of dimension $d$ which is isometrically embedded in $\mathbb{R}^{k}$ for some $k$. Let $B_{t}$ be a Brownian motion on $M$ and consider the process $\mathbf{X}_{t}=\left(B_{t},\int_{0}^{t}\sigma\left(B_{s}\right)ds\right)$ where $\sigma:M\to\mathbb{R}^k$ satisfies \eqref{C-sigma1} and
\[
\left|\sigma(p)-\sigma(\tilde{p})\right|\leqslant C_{\sigma}d_{M}\left(p,\tilde{p}\right),
\]
for all $p,\tilde{p}\in M$ where $d_{M}$ is the intrinsic Riemannian
distance on $M$.
We can write the generator of $B_{t}$ as
\[
\Delta_{p}=\sum_{i=1}^{k}P_{i}^{2},
\]
for some vector fields $P_{i}$ on $\mathbb{R}^{k}$ (see for instance \cite[Theorem 3.1.4]{HsuEltonBook}). The generator of $X_{t}$ is
\[
\mathcal{L}=\Delta_{p}+\sum_{i=1}^{k}\sigma_{i}\left(p\right)\frac{\partial}{\partial\xi_{i}},
\]
for functions $f\left(p,\xi\right)\in M\times\mathbb{R}^{k}$ where
$p\in M,\xi\in\mathbb{R}^{k}$.

To apply Theorem \ref{GenGrad-Gamma} we first need to construct an appropriate Lyapunov function $W$ for the operator $\mathcal{L}$ satisfying Assumption \ref{Ref1}. Once we construct $W$, we will spend the rest of the section verifying Assumption \ref{Ref1} for $W$.  For this, we  assume that the Ricci curvature $\operatorname{Ric} \geqslant \rho$  for some $\rho \in \mathbb{R}$.  Then it is known from the Li-Yau upper and lower bounds in \cite{LiYau1986} that the heat kernel $p(x,y,t)$ of $M$ satisfies the following Gaussian estimates. Namely, for some $\tau >0$
 \begin{align*}
& \frac{c_1}{Vol(B(p_0,\sqrt
\tau))} \exp
\left(-\frac{ c_2d_M(p_0,p_1)^2}{\tau}\right) \leqslant p(p_0,p_1,\tau)
\\
& \leqslant \frac{c_3}{Vol(B(p_0,\sqrt
\tau))} \exp
\left(-\frac{c_4d_M(p_0,p_1)^2}{\tau}\right),
\end{align*}
where $d_{M}$ is the Riemannian distance in $M$ and $p_0, p_1 \in M$. Consider now the smooth  Lyapunov function
\begin{equation}\label{LyapunovManifold}
W\left(p,\xi\right):=K+\left\Vert \xi\right\Vert ^{2}-\ln p(p_0,p,\tau), p\in M, \xi \in\mathbb{R}^{k},
\end{equation}
where $p_{0}$ is an arbitrary fixed point in $M$, and $K$ is a constant large enough so that $W \geqslant 1$.

\begin{lemma}\label{lem4.5}
 The function $W$ defined in \eqref{LyapunovManifold} is smooth and satisfies the following properties,
\begin{align*}
& W\geqslant 1,
\\
& \left\Vert \nabla_{\xi}W\right\Vert +\left\Vert \nabla_{p}W\right\Vert \leqslant CW,
\\
& \mathcal{L}W\leqslant CW \text{ for some constant } C>0,
\\
& \left\{ W\leqslant m\right\}  \text{ is compact for every } m.
\end{align*}
Here $\nabla_{p}$ is the Riemannian gradient on $M$ and $\nabla_{\xi}$ is the Euclidean gradient on $\mathbb{R}^{k}$.
\end{lemma}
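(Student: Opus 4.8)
The plan is to verify the four properties in Lemma \ref{lem4.5} one by one, using the Li--Yau Gaussian estimates on the heat kernel together with standard comparison geometry on $M$. Smoothness of $W$ is immediate: $\|\xi\|^2$ is smooth on $\mathbb{R}^k$, and $p(p_0,\cdot,\tau)$ is a strictly positive smooth function on $M$ (parabolic regularity of the heat kernel), so $\ln p(p_0,\cdot,\tau)$ is smooth. For $W\geqslant 1$ we simply note that $\|\xi\|^2\geqslant 0$ and that, by the upper Gaussian bound, $-\ln p(p_0,p,\tau)\geqslant -\ln\!\big(c_3/\mathrm{Vol}(B(p_0,\sqrt\tau))\big) + c_4 d_M(p_0,p)^2/\tau$, which is bounded below; hence choosing $K$ large enough makes $W\geqslant 1$ everywhere.

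The next step is the gradient bound $\|\nabla_\xi W\|+\|\nabla_p W\|\leqslant CW$. Since $\nabla_\xi W = 2\xi$, we have $\|\nabla_\xi W\| = 2\|\xi\| \leqslant 1+\|\xi\|^2 \leqslant 1+W$ (after absorbing constants). For the $M$-gradient, $\nabla_p W = -\nabla_p \ln p(p_0,p,\tau)$, so I need an estimate of the form $\|\nabla_p \ln p(p_0,p,\tau)\| \leqslant C(1+d_M(p_0,p))$; this follows from the Li--Yau gradient estimate (under $\mathrm{Ric}\geqslant\rho$) which bounds $\|\nabla_p \ln p\|^2$ by a multiple of $\partial_t \ln p$ plus lower-order terms, and combined with the two-sided Gaussian bounds gives at worst linear growth in $d_M(p_0,p)$. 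Then $d_M(p_0,p) \leqslant 1 + d_M(p_0,p)^2 \leqslant C' W$ by the lower bound on $-\ln p$ again, so $\|\nabla_p W\|\leqslant CW$.

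For the Lyapunov inequality $\mathcal{L}W\leqslant CW$, compute $\mathcal{L}W = \Delta_p W + \sum_i \sigma_i(p)\,\partial_{\xi_i}W$. The first piece splits as $\Delta_p(\|\xi\|^2) = 0$ (the $M$-Laplacian does not see $\xi$) plus $-\Delta_p \ln p(p_0,p,\tau)$. Writing $\Delta_p \ln p = \frac{\Delta_p p}{p} - \|\nabla_p \ln p\|^2 = \frac{\partial_t p}{p} - \|\nabla_p \ln p\|^2 = \partial_t \ln p - \|\nabla_p\ln p\|^2$, and invoking the Li--Yau lower bound $\partial_t \ln p \geqslant -\frac{C}{t} - C$ (valid under $\mathrm{Ric}\geqslant\rho$; at the fixed time $\tau$ this is a constant) together with the gradient estimate already used, we get $-\Delta_p \ln p \leqslant \|\nabla_p\ln p\|^2 + C \leqslant C(1+d_M(p_0,p)^2) \leqslant C'W$. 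The drift term is $\sum_i \sigma_i(p)\, 2\xi_i = 2\langle \sigma(p),\xi\rangle$, and using $|\sigma(p)|\leqslant |\sigma(p_0)| + C_\sigma d_M(p_0,p)$ together with $2ab\leqslant a^2+b^2$ we bound $2\langle\sigma(p),\xi\rangle \leqslant \|\sigma(p)\|^2 + \|\xi\|^2 \leqslant C(1+d_M(p_0,p)^2+\|\xi\|^2)\leqslant C'W$. Summing the two contributions gives $\mathcal{L}W\leqslant CW$. Finally, compactness of $\{W\leqslant m\}$: by the lower bound on $-\ln p$, the set $\{W\leqslant m\}$ is contained in $\{\|\xi\|^2\leqslant m\}\times\{d_M(p_0,p)^2\leqslant C m\}$, i.e.\ in a product of a closed Euclidean ball with a closed metric ball in $M$; the metric ball is compact because $M$ is complete (Hopf--Rinow), and $\{W\leqslant m\}$ is closed since $W$ is continuous, hence it is compact.

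I expect the main obstacle to be the gradient estimate $\|\nabla_p\ln p(p_0,p,\tau)\|\leqslant C(1+d_M(p_0,p))$ and the companion lower bound on $\partial_t\ln p$: these are exactly the Li--Yau estimates, which require only $\mathrm{Ric}\geqslant\rho$ and completeness, but one must be slightly careful that the constants are uniform over all $p\in M$ at the fixed time $\tau$ (not just locally), which is what the global form of the Li--Yau/Hamilton estimates provides. Everything else is routine bookkeeping with the elementary inequality $2ab\leqslant a^2+b^2$ and the two-sided Gaussian heat kernel bounds.
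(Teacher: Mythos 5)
Your argument is correct and follows essentially the same route as the paper: Hamilton/Li--Yau log-derivative estimates for the heat kernel, the Gaussian upper bound rearranged to give $d_M(p_0,p)^2\leqslant C\left(K-\ln p(p_0,p,\tau)\right)\leqslant CW$, and then Cauchy--Schwarz together with the Lipschitz bound on $\sigma$ for the drift term, with compactness from completeness and the same distance control. The only cosmetic difference is that the paper quotes the bound $\Delta_p\left(-\ln p(p_0,p,\tau)\right)\leqslant C_1+C_2 d_M(p_0,p)^2$ directly from the cited references, whereas you re-derive it via the heat equation and the Li--Yau lower bound on $\partial_t\ln p$; both rest on the same cited estimates.
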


\begin{proof}
From estimates for logarithmic derivatives of the heat kernel in \cite{Hamilton1993b, LiYau1986}, one has for some constants $C_1,C_2>0$
\begin{align}
\left\Vert \nabla_{p}\ln p\left(p_{0},p,\tau\right)\right\Vert ^{2} & \leqslant C_{1}+C_{2}d_{M}\left(p_{0},p\right)^{2},\label{Li-Yau-Ineq1}\\
\Delta_{p}\left(-\ln p\left(p_{0},p,\tau\right)\right) & \leqslant C_{1}+C_{2}d_{M}\left(p_{0},p\right)^{2}.\label{Li-Yau-Ineq2}
\end{align}
We can then conclude with the Li-Yau upper and lower Gaussian bounds. To see this note that the Gaussian bounds can be rearranged as

\begin{align}
d_{M}\left(p_{0},p\right)^{2} & \leqslant-\frac{\tau}{c_{4}} \ln\left(\frac{\operatorname{Vol}(B\left(p_{0},\sqrt{\tau}\right))}{c_{3}}p\left(p_{0},p,\tau\right)\right)
\notag
\\
 & \leqslant C\left(K-\ln\left(p\left(p_{0}, p, \tau\right)\right)\right)
 \label{Lya-proof1}
\end{align}
for a fixed $\tau \geqslant 0$ and a constant $C>0$. Hence, $\left\Vert \nabla_{\xi}W\right\Vert +\left\Vert \nabla_{p}W\right\Vert \leqslant CW$ can be shown using \eqref{Li-Yau-Ineq1}, \eqref{Lya-proof1} and the inequality $\left(1+x\right)^{\frac{1}{2}}\leqslant1+cx$ for $x\geqslant 0$ and $c\geqslant\frac{1}{2}$. On the other hand, $\mathcal{L}W\leqslant CW$ can be shown using \eqref{Li-Yau-Ineq2}, \eqref{Lya-proof1}, the Cauchy-Schwarz inequality, and the Lipschitz property of $\sigma$. Finally, the fact that $\left\{ W\leqslant m\right\}$ is compact for every $m$ also follows from the Li-Yau upper and lower Gaussian bounds.

\end{proof}

Lemma \ref{lem4.5} proves that $W$ defined by \eqref{LyapunovManifold} is a Lyapunov function satisfying Assumption \ref{Ref1}. As a consequence, Theorem \ref{GenGrad-Gamma} can be applied to complete Riemannian manifolds with $\operatorname{Ric}\geqslant\rho$ since the condition $\operatorname{Ric}\geqslant\rho$ is equivalent to
\[
\Gamma_{2}^{\Delta}(f)\geqslant \rho\Gamma\left(f\right).
\]

\subsection{Coupling}\label{5.2}

Let $(M,g)$ be a complete connected $d-$dimensional Riemannian manifold. In this section $M$ is not necessarily embedded in $\mathbb{R}^{k}$. We assume the existence of a map $\sigma: M \to \mathbb{R}^{k}$ for some $k\geqslant 1$
that is globally $C_\sigma$-Lipschitz map in the sense that
\begin{equation}\label{Lip1}
\left|\sigma\left(p\right)-\sigma\left(\widetilde{p}\right)\right|\leqslant C_{\sigma}d_{M}\left(p,\widetilde{p}\right),
\end{equation}
for all $p,\widetilde{p}\in M$. Here we denote by $d_M$ the Riemannian distance on $M$, and by $d_E$ we denote the Euclidean metric in $\mathbb{R}^k$. We will consider the joint process
\begin{equation}\label{hypo}
\mathbf{X}_{t}=\left(B_{t},\int_{0}^{t}\sigma\left(B_{s}\right)ds\right),
\end{equation}
on the product space $M\times \mathbb{R}^k$ where $B_{t}$ is Brownian motion on $M$.

Let $P_t$ be the associated heat semigroup. We consider functions on $M \times \mathbb{R}^{k}$ with $f \left( p, \xi\right), p \in M, \xi \in \mathbb{R}^{k}$. Recall that the operators $\nabla_p$ and $\Delta_p$ act on the variable $p$ for  $f\left( p, \xi \right)$, where $\Delta_p$ is the Laplace-Beltrami operator. We use $\nabla_\xi$ for the usual Euclidean gradient. Given a Riemannian metric $g$, for all $p\in M$ and $v\in T_p M$ we denote $\left\Vert v\right\Vert =g_{p}\left(v,v\right)^{\frac{1}{2}}$.  Our main result of this section is a bound on $\left\Vert \nabla_{p}P_{t}f\right\Vert$ for functions $f\in C^{\infty}\left(M\times\mathbb{R}^{k}\right)$ with bounded Hessian. This will be a generalization of the result obtained in Section \ref{EuclidCoupling}.

Let us recall the notion of a coupling of diffusions on a manifold $M$. Suppose $X_{t}$ and $\widetilde{X}_{t}$ are $M$-valued diffusions starting at $x,\widetilde{x}\in M$ on a probability space $\left(\Omega, \mathcal{F}, \mathbb{P} \right)$. Then by a coupling of $X_{t}$ and $\widetilde{X}_{t}$ we call a $C\left( \mathbb{R}_{+}, M \times M \right)$-valued random variable  $\left( X_{t}, \widetilde{X}_{t} \right)$  on the probability space $\left(\Omega, \mathcal{F}, \mathbb{P} \right)$ such that the marginal processes for $\left( X_{t},\widetilde{X}_{t} \right)$ have the same laws as $X_{t}$ and $\widetilde{X}_{t}$. Let $\mathbb{P}^{\left(x,\widetilde{x}\right)}$ be the distribution of $\left( X_t,\widetilde{X}_t \right)$, so that  $\mathbb{P}^{(x,\widetilde{x})}\left( X_{0}=x,\widetilde{X}_{0}=\widetilde{x}\right)=1$. We denote by
$\mathbb{E}^{\left(x,\widetilde{x}\right)}$ the expectation with respect to the probability measure $\mathbb{P}^{\left(x,\widetilde{x}\right)}$.


In \cite{vonRenesseSturm2005, WangFY1997b, vonRenesse2004a} it has been shown that if we assume $\operatorname{Ric}\left(M\right)\geqslant K$ for some $K \in \mathbb{R}$, then there exists a Markovian coupling of Brownian motions $\left(B_{t}\right)_{t\geqslant 0}$ and $\left(\widetilde{B}_{t}\right)_{t \geqslant 0}$ on $M$ starting at $p$ and $\widetilde{p}$  such that
\begin{equation}
d_{M}\left(B_{t},\widetilde{B}_{t}\right)\leqslant e^{-Kt/2}d_{M}\left(p,\widetilde{p}\right)\label{eq:Sturm1}
\end{equation}
for all $t\geqslant0$, $\mathbb{P}^{\left(p,\widetilde{p}\right)}$-almost surely. This construction is known as a \emph{coupling by parallel transport}. This coupling can be constructed using stochastic differential equations as in \cite{WangFY1997b, Cranston1991}, or by a central limit theorem argument for the geodesic random walks as in \cite{vonRenesse2004a}.  It turns out that the existence of the coupling satisfying \eqref{eq:Sturm1} is equivalent to
\begin{equation}\label{RiemGrad}
\left\Vert \nabla P_{t}f\right\Vert \leqslant e^{-Kt}P_{t}\left(\left\Vert \nabla f\right\Vert \right),
\end{equation}
for all $f\in C_{0}^{\infty}(M)$ and all $t>0$. We also point out that in  \cite{PascuPopescu2016,Pascu-Popescu2018}, M.~Pascu and I.~Popescu constructed explicit Markovian couplings where equality in  \eqref{eq:Sturm1} is attained for $t\geqslant 0$ dependent on $K$ and given some extra geometric assumptions.

The coupling by parallel transport that gives \eqref{eq:Sturm1} is in the elliptic setting. In this section, we will use the coupling by parallel transport to induce a coupling for \eqref{hypo} in the hypoelliptic setting. We will then use this coupling to prove gradient bounds for $(P_t)_{t\geq 0}$. Before stating the result on the gradient bound, we have the following
proposition.

\begin{proposition}\label{Prop:Limsup}
Let $\left(M,g\right)$ be a Riemannian manifold. If $f\in C^{1}\left(M\right)$
then
\begin{equation}\label{eq:Limsup}
\lim_{r\to0}\sup_{\widetilde{p}:0<d_{M}\left(p,\widetilde{p}\right)\leqslant r}\frac{\left|f\left(p\right)-f\left(\widetilde{p}\right)\right|}{d_{M}\left(p,\widetilde{p}\right)}=\left\Vert \nabla f(p)\right\Vert.
\end{equation}

\end{proposition}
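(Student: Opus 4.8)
The plan is to prove the two inequalities in Proposition~\ref{Prop:Limsup} separately: the limit superior is at least $\left\Vert \nabla f(p)\right\Vert$, and it is at most $\left\Vert \nabla f(p)\right\Vert$.

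\textbf{Lower bound.} First I would show that the quantity on the left-hand side (call it $Q(p)$) is $\geqslant \left\Vert \nabla f(p)\right\Vert$. If $\nabla f(p)=0$ there is nothing to prove, so assume $v:=\nabla f(p)/\left\Vert\nabla f(p)\right\Vert$ is a well-defined unit vector. Let $\gamma:(-\ep_0,\ep_0)\to M$ be the geodesic with $\gamma(0)=p$, $\dot\gamma(0)=v$, parametrized by arc length, so that $d_M(p,\gamma(s))=|s|$ for $|s|$ small. Then along this curve
\[
\frac{\left|f(p)-f(\gamma(s))\right|}{d_M(p,\gamma(s))}=\frac{\left|f(\gamma(s))-f(\gamma(0))\right|}{|s|}\xrightarrow[s\to 0]{} \left| \frac{d}{ds}\Big|_{s=0} f(\gamma(s))\right| = \left| df_p(v)\right| = \left\Vert\nabla f(p)\right\Vert,
\]
using that $f\in C^1$ and the chain rule. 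Since for each small $r>0$ the point $\gamma(s)$ with $|s|=r$ satisfies $0<d_M(p,\gamma(s))\leqslant r$, taking the supremum only makes things larger, so $Q(p)\geqslant \left\Vert\nabla f(p)\right\Vert$.

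\textbf{Upper bound.} For the reverse inequality I would work in normal coordinates (equivalently, via the exponential map) centered at $p$. Choose $\ep_0>0$ so that $\exp_p$ is a diffeomorphism from the ball of radius $\ep_0$ in $T_pM$ onto a neighborhood of $p$, and for $\widetilde p$ with $d_M(p,\widetilde p)=\rho<\ep_0$ write $\widetilde p=\exp_p(\rho u)$ for a unit vector $u\in T_pM$. The minimizing geodesic $c(t)=\exp_p(t\rho u)$, $t\in[0,1]$, has $c(0)=p$, $c(1)=\widetilde p$, and $\left\Vert\dot c(t)\right\Vert=\rho=d_M(p,\widetilde p)$ for all $t$. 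By the fundamental theorem of calculus applied to $t\mapsto f(c(t))$,
\[
\left|f(p)-f(\widetilde p)\right| = \left| \int_0^1 df_{c(t)}(\dot c(t))\, dt\right| \leqslant \int_0^1 \left\Vert \nabla f(c(t))\right\Vert\, \left\Vert\dot c(t)\right\Vert\, dt = d_M(p,\widetilde p)\int_0^1 \left\Vert \nabla f(c(t))\right\Vert\, dt.
\]
Dividing by $d_M(p,\widetilde p)$ gives
\[
\frac{\left|f(p)-f(\widetilde p)\right|}{d_M(p,\widetilde p)}\leqslant \sup_{0\leqslant t\leqslant 1}\left\Vert \nabla f(c(t))\right\Vert \leqslant \sup_{q:\, d_M(p,q)\leqslant r}\left\Vert \nabla f(q)\right\Vert,
\]
the last step because $d_M(p,c(t))=t\rho\leqslant r$. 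Taking the supremum over $\widetilde p$ with $0<d_M(p,\widetilde p)\leqslant r$ and then letting $r\to 0$, continuity of $\nabla f$ (as a section of $TM$, so $q\mapsto\left\Vert\nabla f(q)\right\Vert$ is continuous) yields $\limsup_{r\to0}(\cdots)\leqslant \left\Vert\nabla f(p)\right\Vert$. Combining the two bounds proves \eqref{eq:Limsup}.

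\textbf{Main obstacle.} The only delicate points are local and standard: ensuring the minimizing geodesic between $p$ and nearby $\widetilde p$ exists, is unique, has length exactly $d_M(p,\widetilde p)$, and stays within distance $r$ of $p$ — all guaranteed by restricting to a totally normal (geodesically convex) neighborhood of $p$, whose existence on any Riemannian manifold is classical. One should also be slightly careful that the inner supremum in \eqref{eq:Limsup} is taken over $\widetilde p$ with $d_M(p,\widetilde p)\leqslant r$ rather than $=r$, but this is harmless since the geodesic argument above bounds the difference quotient uniformly for all such $\widetilde p$. No completeness or curvature hypothesis is needed for this purely local statement.
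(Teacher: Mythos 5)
Your proof is correct and follows essentially the same route as the paper's: the lower bound comes from the unit-speed geodesic in the direction $\nabla f(p)/\left\Vert \nabla f(p)\right\Vert$ together with the chain rule, and the upper bound from integrating $df$ along the minimizing geodesic joining $p$ and $\widetilde{p}$ and applying Cauchy--Schwarz. The only difference is cosmetic: you make explicit the restriction to a normal neighborhood and the continuity of $q\mapsto\left\Vert\nabla f(q)\right\Vert$ in the final limit, points the paper leaves implicit.
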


\begin{proof}
Let $p,\widetilde{p}\in M$ with $T=d_{M}\left(p,\widetilde{p}\right)$ and
consider a unit speed geodesic $\gamma:\left[0, T\right]\to M$
such that $\gamma\left(0\right)=\widetilde{p}$ and $\gamma\left(T\right)=p$. Then

\begin{align*}
\left|f\left(p\right)-f\left(\widetilde{p}\right)\right| & =\left|\int_{0}^{d\left(p,\widetilde{p}\right)}g\left(\nabla f\left(\gamma\left(s\right)\right),\gamma^{\prime}(s)\right)ds\right|\\
 & \leqslant\int_{0}^{d\left(p,\widetilde{p}\right)}\left|g\left(\nabla f\left(\gamma\left(s\right)\right),\gamma^{\prime}(s)\right)\right|ds\\
 & \leqslant\max_{0\leqslant s\leqslant d(p,\widetilde{p})}\left\Vert \nabla f\left(\gamma(s)\right)\right\Vert \cdot d\left(p,\widetilde{p}\right)
\end{align*}
where we used the Cauchy-Schwarz inequality. Since $p,\widetilde{p}$ are arbitrary,
dividing out both sides by $d\left(p,\widetilde{p}\right)$ we have that
\[
\lim_{r\to0}\sup_{\widetilde{p}:0<d_{M}\left(p,\widetilde{p}\right)\leqslant r}\frac{\left|f\left(p\right)-f\left(\widetilde{p}\right)\right|}{d_{M}\left(p,\widetilde{p}\right)}\leqslant\left\Vert \nabla f(p)\right\Vert .
\]

On the other hand, find a unit speed geodesic $\gamma:\left(-\epsilon, \epsilon\right) \to M$
such that $\gamma (0)=p$ and  $\gamma^{\prime}\left(0\right)=\nabla f(p)/\left\Vert \nabla f(p)\right\Vert$. Define $F\left(s\right)=f\left(\gamma\left(s\right)\right)$. Since
$F^{\prime}\left(s\right)=g\left(\nabla f\left(\gamma\left(s\right)\right),\gamma^{\prime}(s)\right)$,
then
\[
F^{\prime}\left(0\right)=g\left(\nabla f\left(p\right),\frac{\nabla f(p)}{\left\Vert \nabla f(p)\right\Vert }\right)=\left\Vert \nabla f(p)\right\Vert .
\]
Now by the definition of the derivative we have that
\[
\lim_{h\to0}\frac{F(h)-F(0)}{h}\to\left\Vert \nabla f(p)\right\Vert, \]
which means we have that the left hand side of \eqref{eq:Limsup}
must be at least $\left\Vert \nabla f(p)\right\Vert $. This proves \eqref{eq:Limsup}.

\end{proof}

The following lemma gives an estimate for $\left|f(p,\xi)-f(\widetilde{p}, \widetilde{\xi})\right|$ on $M\times \mathbb{R}^k$.

\begin{lemma}
\label{prop:Taylor} Let $\left(M, g\right)$ be a complete Riemannian
manifold which is assumed to be embedded in $\mathbb{R}^{k}$.  For a function $f\left( p, \xi\right)$ we denote by $\nabla_{p}f$  the Riemannian gradient acting on $p$, and by $\nabla_{\xi}f$  the Euclidean gradient acting on $\xi$. If $f\in C^{2} \left( M\times\mathbb{R}^{k} \right)$ with a bounded Hessian, then there exists a $C_{f}>0$ depending on a bound on the Hessian of $f$ such that
\begin{align*}
\left|f(p,\xi)-f(\widetilde{p},\widetilde{\xi})\right| & \leqslant\left\Vert \nabla_{p}f\left(\widetilde{p},\widetilde{\xi}\right)\right\Vert d_{M}\left(p,\widetilde{p}\right)+\left\Vert \nabla_{\xi}f\left(\widetilde{p},\widetilde{\xi}\right)\right\Vert d_{E}\left(\xi,\widetilde{\xi}\right)\\
 & +C_{f}\left(d_{M}\left(p,\widetilde{p}\right)+d_{E}\left(\xi,\widetilde{\xi}\right)\right)^{2}
\end{align*}
for any $\left(p,\xi\right),\left(\widetilde{p},\widetilde{\xi}\right)\in M\times\mathbb{R}^{k}$. \end{lemma}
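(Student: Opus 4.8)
The plan is to estimate $|f(p,\xi)-f(\widetilde p,\widetilde\xi)|$ by splitting the displacement into a ``$\xi$-move'' and a ``$p$-move'' and controlling each by a second-order Taylor expansion. First I would write
\[
f(p,\xi)-f(\widetilde p,\widetilde\xi)=\bigl(f(p,\xi)-f(p,\widetilde\xi)\bigr)+\bigl(f(p,\widetilde\xi)-f(\widetilde p,\widetilde\xi)\bigr),
\]
handling the first bracket in the flat variable $\xi\in\mathbb R^k$ and the second bracket along a minimizing geodesic in $M$. For the Euclidean part, a one-dimensional Taylor expansion of $t\mapsto f(p,\widetilde\xi+t(\xi-\widetilde\xi))$ with integral remainder gives
\[
f(p,\xi)-f(p,\widetilde\xi)=\langle\nabla_\xi f(p,\widetilde\xi),\xi-\widetilde\xi\rangle+\frac12\int_0^1(1-t)\,\mathrm{Hess}_\xi f(\cdots)[\xi-\widetilde\xi,\xi-\widetilde\xi]\,dt,
\]
so that the remainder is bounded by $\tfrac12 \|\mathrm{Hess} f\|_\infty d_E(\xi,\widetilde\xi)^2$.

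For the manifold part I would use Proposition \ref{Prop:Limsup}'s geodesic setup: let $\gamma:[0,T]\to M$ be a unit-speed minimizing geodesic with $\gamma(0)=\widetilde p$, $\gamma(T)=p$, $T=d_M(p,\widetilde p)$, and set $F(s)=f(\gamma(s),\widetilde\xi)$. Then $F(T)-F(0)=F'(0)T+\int_0^T(T-s)F''(s)\,ds$, where $F'(0)=g(\nabla_p f(\widetilde p,\widetilde\xi),\gamma'(0))\leqslant\|\nabla_p f(\widetilde p,\widetilde\xi)\|\,d_M(p,\widetilde p)$ by Cauchy-Schwarz, and $F''(s)=\mathrm{Hess}_p f(\gamma(s),\widetilde\xi)[\gamma'(s),\gamma'(s)]$ since $\gamma$ is a geodesic (the covariant acceleration vanishes). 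The boundedness of the full Hessian of $f$ then bounds $|F''(s)|$ uniformly, yielding a remainder controlled by $\tfrac12 C d_M(p,\widetilde p)^2$. Collecting the two linear terms and the two quadratic remainders, and using $a^2+b^2\leqslant(a+b)^2$ to combine $d_M(p,\widetilde p)^2$ and $d_E(\xi,\widetilde\xi)^2$ into $(d_M(p,\widetilde p)+d_E(\xi,\widetilde\xi))^2$, gives the claimed inequality with a suitable $C_f$ depending only on the Hessian bound.

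The only genuinely delicate point is making precise what ``bounded Hessian'' means and how it controls the three kinds of second derivatives that appear: the pure $\xi$-Hessian (flat, unproblematic), the mixed $p$-$\xi$ derivatives (which actually do not enter this particular split since we move one variable at a time), and the pure Riemannian Hessian $\mathrm{Hess}_p f$ along geodesics. For the last one I would note that the Hessian of $f$ as a function on the product manifold $M\times\mathbb R^k$ restricts, along a curve of the form $s\mapsto(\gamma(s),\widetilde\xi)$ with $\gamma$ geodesic in $M$, exactly to $\mathrm{Hess}_p f[\gamma',\gamma']$, so a uniform bound on the full Hessian transfers directly. A minor technical nuisance is that if $M$ is not complete or cut locus issues arise one might not have a global minimizing geodesic; but completeness is assumed, so Hopf--Rinow guarantees the geodesic exists, and the estimate along it is all that is needed. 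Hence the proof is essentially a bookkeeping exercise once the geodesic Taylor expansion is set up as in Proposition \ref{Prop:Limsup}.
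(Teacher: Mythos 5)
Your overall strategy (Taylor expansion along a minimizing geodesic in $M$ and a segment in $\mathbb{R}^{k}$, second derivatives absorbed into the quadratic remainder) is sound, but as written there is a concrete error: with your splitting
\[
f(p,\xi)-f(\widetilde p,\widetilde\xi)=\bigl(f(p,\xi)-f(p,\widetilde\xi)\bigr)+\bigl(f(p,\widetilde\xi)-f(\widetilde p,\widetilde\xi)\bigr),
\]
the Euclidean Taylor expansion produces the linear term $\langle\nabla_\xi f(p,\widetilde\xi),\xi-\widetilde\xi\rangle$, i.e. the $\xi$-gradient evaluated at $(p,\widetilde\xi)$, whereas the lemma requires $\Vert\nabla_\xi f(\widetilde p,\widetilde\xi)\Vert\,d_E(\xi,\widetilde\xi)$ with the gradient at $(\widetilde p,\widetilde\xi)$. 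Your explicit claim that the mixed $p$--$\xi$ second derivatives ``do not enter this particular split'' is exactly where this goes wrong: moving one variable at a time always leaves one of the two gradients at the wrong base point (the same issue arises, for $\nabla_p f$, if you split in the other order), and repairing it is precisely a mixed-derivative estimate. The fix is short: differentiate $\nabla_\xi f(\cdot,\widetilde\xi)$ along the geodesic $\gamma$ from $\widetilde p$ to $p$ to get $\Vert\nabla_\xi f(p,\widetilde\xi)-\nabla_\xi f(\widetilde p,\widetilde\xi)\Vert\leqslant C\,d_M(p,\widetilde p)$ from the Hessian bound on the product, which contributes an extra cross term $C\,d_M(p,\widetilde p)\,d_E(\xi,\widetilde\xi)\leqslant C\left(d_M(p,\widetilde p)+d_E(\xi,\widetilde\xi)\right)^{2}$, absorbed into $C_f$. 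With that correction your argument is complete.

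For comparison, the paper avoids this issue by performing a single two-variable expansion: it sets $F(t,s)=f(\gamma(t),\beta(s))$ with $\gamma$ the unit-speed geodesic from $\widetilde p$ to $p$ and $\beta$ the segment from $\widetilde\xi$ to $\xi$, and applies Taylor's theorem to $F$ at $(0,0)$, so that both first-order terms are automatically evaluated at $(\widetilde p,\widetilde\xi)$ and all second derivatives, including the mixed ones, are bundled into the remainder $C_f(t+s)^2$. Your split-variable route is fine once the base-point correction above is included, but it is not shorter than the joint expansion, and the point you flagged as negligible (the mixed derivatives) is the one place where the two approaches genuinely differ.
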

\begin{proof}
Let $p,\widetilde{p}\in M$ with $T_{1}=d_{M}\left(p,\widetilde{p}\right)$
and consider a unit speed geodesic $\gamma:\left[0,T_1\right]\to M$
such that $\gamma\left(0\right)=\widetilde{p}$ and $\gamma\left(T_1\right)=p$. Let $\xi,\widetilde{\xi}\in\mathbb{R}^{k}$ with $T_{2}=d_{E}\left(\xi,\widetilde{\xi}\right)$
and consider $\beta(s)=\frac{s}{d_{E}\left(\xi,\widetilde{\xi}\right)}\left(\xi-\widetilde{\xi}\right)+\widetilde{\xi}$
on $-\infty\leqslant s\leqslant T_{2}$ such that $\beta(0)=\widetilde{\xi}$ and
$\beta(T_{2})=\xi$. Extend $\gamma$ to $\left[-\epsilon,T_1\right]$
for some $\epsilon>0$ and define $F\left(t,s\right)=f\left(\gamma\left(t\right),\beta(s)\right)$.
By an estimate on the remainder of Taylor's approximation there exists a $C_{f}>0$ depending
only on a bound on the Hessian of $f$ such that
\[
\left|F(t,s)-F\left(0,0\right)\right|  \leqslant\left|F_{t}(0,0)t+F_{s}(0,0)s\right|+C_{f}\left(t+s\right)^{2}.
\]
By the chain rule we have
\begin{align*}
F_t(0,0)=\frac{d}{dt}\left[f\left(\gamma\left(t\right),\beta(0)\right)\right]_{t=0} & =\left\langle \nabla_{p}f\left(\gamma\left(0\right),\beta(0)\right),\gamma^{\prime}\left(0\right)\right\rangle \\
 & \leqslant\left\Vert \nabla_{p}f\left(\gamma\left(0\right),\beta(0)\right)\right\Vert =\left\Vert \nabla_{p}f\left(\widetilde{p},\widetilde{\xi}\right)\right\Vert .
\end{align*}
 Similarly $F_s(0,0)=\frac{d}{ds}\left[f\left(\gamma\left(0\right),\beta(s)\right)\right]_{s=0}\leqslant\left\Vert \nabla_{\xi}f\left(\widetilde{p},\widetilde{\xi}\right)\right\Vert$. Then
\begin{align*}
 & \left|f(p,\xi)-f(\widetilde{p},\widetilde{\xi})\right|=\left|F(T_1,T_2)-F\left(0,0\right)\right|\\
 & \leqslant\left\Vert \nabla_{p}f\left(\widetilde{p},\widetilde{\xi}\right)\right\Vert T_{1}+\left\Vert \nabla_{\xi}f\left(\widetilde{p},\widetilde{\xi}\right)\right\Vert T_{2}+C_{f}\left(T_{1}+T_{2}\right)^{2},
\end{align*}
as needed.
\end{proof}

We are now ready to state and prove the main theorem of this section. We start by considering the coupling of Brownian motions $\left(B_{t}, \widetilde{B}_{t}\right)$ starting at $\left( p,\widetilde{p} \right)$ by parallel transport satisfying \eqref{eq:Sturm1}, as introduced in \cite{vonRenesseSturm2005,vonRenesse2004a} . This coupling induces a coupling $\mathbb{P}^{\left(\mathbf{x},\widetilde{\mathbf{x}}\right)}$ on $\left(M\times\mathbb{R}^{d}\right)\times\left(M\times\mathbb{R}^{d}\right)$
for two Kolmogorov type diffusions
\[
\mathbf{X}_{t}=\left(B_{t},\xi+\int_{0}^{t}\sigma\left(B_{s}\right)ds\right)\text{ and }\widetilde{\mathbf{X}}_{t}=\left(\widetilde{B}_{t},\xi+\int_{0}^{t}\sigma\left(\widetilde{B}_{s}\right)ds\right),
\]
started at $\mathbf{x}=\left(p,\xi\right)$ and $\widetilde{\mathbf{x}}=\left(\widetilde{p}, \xi\right)$ respectively.

\begin{theorem}[Bakry-\'Emery type estimate]\label{GenGradient}
Let $M$ be a complete connected Riemannian manifold such that  $\text{Ric}\left(M\right)\geqslant K$ for some
$K\in\mathbb{R}$. Let $\sigma$ be a $C_\sigma-$Lipschitz
map as in \eqref{Lip1} and $f\in C^{2}\left(M\times\mathbb{R}^{k}\right)$ with a bounded Hessian.
Then for every $q\geqslant 1$ and $t \geqslant 0$,
\[
\left\Vert \nabla_{p}P_{t}f\right\Vert ^{q}\leqslant P_{t}\left(\left(K_{1}(t)\left\Vert \nabla_{p}f\right\Vert +K_{2}(t)\left\Vert \nabla_{\xi}f\right\Vert \right)^{q}\right),
\]
where
\[
K_{1}(t)=e^{-Kt/2} \text{ and } K_{2}(t)=\begin{cases}
C_{\sigma}t & K=0\\
C_{\sigma}\frac{1-e^{-Kt/2}}{K/2} & K\neq0.
\end{cases}
\]

\end{theorem}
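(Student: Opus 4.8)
\emph{Proof proposal.} The plan is to run the Riemannian coupling-by-parallel-transport argument in exact analogy with the synchronous-coupling proof of Proposition \ref{EuclideanCouplingBakryEmery}. Fix $t>0$, a base point $(p,\xi)\in M\times\mathbb{R}^k$ and a nearby point $(\widetilde p,\xi)$, and consider the two Kolmogorov type diffusions
\[
\mathbf{X}_{t}=\left(B_{t},\xi+\int_{0}^{t}\sigma\left(B_{s}\right)ds\right),\qquad \widetilde{\mathbf{X}}_{t}=\left(\widetilde{B}_{t},\xi+\int_{0}^{t}\sigma\left(\widetilde{B}_{s}\right)ds\right),
\]
where $(B_t,\widetilde B_t)$ is the coupling by parallel transport of Brownian motions on $M$ started at $(p,\widetilde p)$, so that $d_M(B_s,\widetilde B_s)\leq e^{-Ks/2}d_M(p,\widetilde p)$ for all $s\geq0$, $\mathbb{P}^{(\mathbf{x},\widetilde{\mathbf{x}})}$-almost surely, by \eqref{eq:Sturm1}. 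As in Section \ref{EuclidCoupling} this induces a coupling of the two diffusions, and $\widetilde{\mathbf{X}}_t$ has law $P_t(\cdot)(\widetilde p,\xi)$.

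First I would bound the two components of $\mathbf{X}_t-\widetilde{\mathbf{X}}_t$. The $M$-component is controlled directly by \eqref{eq:Sturm1} at time $t$: $d_M(B_t,\widetilde B_t)\leq K_1(t)\,d_M(p,\widetilde p)$. For the Euclidean component, using the Lipschitz hypothesis \eqref{Lip1} together with the pathwise contraction for \emph{all} $s\leq t$,
\[
d_E\!\left(\int_0^t\sigma(B_s)\,ds,\int_0^t\sigma(\widetilde B_s)\,ds\right)\leq \int_0^t\big|\sigma(B_s)-\sigma(\widetilde B_s)\big|\,ds\leq C_\sigma\!\left(\int_0^t e^{-Ks/2}\,ds\right) d_M(p,\widetilde p)=K_2(t)\,d_M(p,\widetilde p),
\]
since $\int_0^t e^{-Ks/2}\,ds$ equals $t$ for $K=0$ and $\frac{1-e^{-Kt/2}}{K/2}$ for $K\neq0$, which is precisely $K_2(t)/C_\sigma$. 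Inserting these two bounds into the second-order Taylor estimate of Lemma \ref{prop:Taylor} applied at the point $\widetilde{\mathbf{X}}_t$ gives
\[
\big|f(\mathbf{X}_t)-f(\widetilde{\mathbf{X}}_t)\big|\leq \big(K_1(t)\|\nabla_pf(\widetilde{\mathbf{X}}_t)\|+K_2(t)\|\nabla_\xi f(\widetilde{\mathbf{X}}_t)\|\big)\,d_M(p,\widetilde p)+C_f\big(K_1(t)+K_2(t)\big)^2 d_M(p,\widetilde p)^2 .
\]

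Next I would take $\mathbb{E}^{(\mathbf{x},\widetilde{\mathbf{x}})}$, use $|P_tf(p,\xi)-P_tf(\widetilde p,\xi)|\leq \mathbb{E}^{(\mathbf{x},\widetilde{\mathbf{x}})}\big|f(\mathbf{X}_t)-f(\widetilde{\mathbf{X}}_t)\big|$, apply Jensen/Hölder with exponent $q$, and use that $\widetilde{\mathbf{X}}_t$ has law $P_t(\cdot)(\widetilde p,\xi)$, to obtain
\[
\frac{|P_tf(p,\xi)-P_tf(\widetilde p,\xi)|}{d_M(p,\widetilde p)}\leq \Big(P_t\big((K_1(t)\|\nabla_pf\|+K_2(t)\|\nabla_\xi f\|)^q\big)(\widetilde p,\xi)\Big)^{1/q}+C_f\big(K_1(t)+K_2(t)\big)^2 d_M(p,\widetilde p).
\]
Letting $\widetilde p\to p$, the last term vanishes, the $P_t$-term converges to its value at $(p,\xi)$, and then Proposition \ref{Prop:Limsup} applied to the function $p\mapsto P_tf(p,\xi)$ (smooth in $p$ by hypoellipticity) converts the left-hand limsup into $\|\nabla_pP_tf(p,\xi)\|$, which is the asserted inequality.

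I expect the only real obstacle to be the analytic justification of this final limit, namely (i) finiteness and continuity of $\widetilde p\mapsto P_t\big((K_1(t)\|\nabla_pf\|+K_2(t)\|\nabla_\xi f\|)^q\big)(\widetilde p,\xi)$, and (ii) the smoothness in $p$ of $P_tf(\cdot,\xi)$ needed to invoke Proposition \ref{Prop:Limsup}. For (i) one notes that a bounded Hessian forces $\|\nabla_pf\|$ and $\|\nabla_\xi f\|$ to grow at most linearly in the distance, while $\mathrm{Ric}\geq K$ gives Gaussian upper bounds on the heat kernel of $M$, so a dominated-convergence argument in the spirit of the end of the proof of Proposition \ref{Kolmogorov1} applies; (ii) follows from hypoellipticity of $\mathcal{L}$. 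Everything else is a transcription of the Euclidean coupling computation, with the contraction \eqref{eq:Sturm1} playing the role of the identity $|B_t^p-\widetilde B_t^p|=|p-\widetilde p|$ and its time-integrated form producing $K_2(t)$.
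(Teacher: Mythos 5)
Your proposal is correct and follows essentially the same route as the paper's own proof: coupling by parallel transport satisfying \eqref{eq:Sturm1}, the time-integrated Lipschitz estimate giving $K_2(t)$, the Taylor bound of Lemma \ref{prop:Taylor}, then Jensen's inequality and Proposition \ref{Prop:Limsup} after dividing by $d_M(p,\widetilde p)$. Your added remarks on justifying the final limit (linear growth of the gradients from the bounded Hessian plus heat kernel bounds, and smoothness of $P_tf$ in $p$) only make explicit a step the paper leaves implicit.
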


\begin{proof}
As before let $d_{M}$ be the Riemannian distance on $M$, and let $d_{E}$ be the Euclidean
distance on $\mathbb{R}^{k}$. Take $\mathbf{x}=\left(p,\xi\right)\in M\times\mathbb{R}^{k}$ and
$\widetilde{\mathbf{x}}=\left(\widetilde{p},\xi\right)\in M\times\mathbb{R}^{k}$. If $K\neq 0$, we consider the coupling by parallel transport of Brownian motions $\left(B_{t},\widetilde{B}_{t}\right)$ starting at $\left(p,\widetilde{p}\right)$.
This coupling gives us that
\begin{equation}
d_{M}\left(B_{t},\widetilde{B}_{t}\right)\leqslant e^{-Kt/2}d_{M}\left(p,\widetilde{p}\right),\label{Synchronous1}
\end{equation}
for all $t\geqslant0$. Denote $Y_{t}=\xi+\int_{0}^{t}\sigma(B_{s})ds$ and $\widetilde{Y}_{s}=\xi+\int_{0}^{t}\sigma(\widetilde{B}_{s})ds$.
If $K\neq 0$ then
\begin{align}
d_{E}\left(Y_{t},\widetilde{Y}_{t}\right)\leqslant\int_{0}^{t}\left|\sigma\left(B_{s}\right)-\sigma\left(\widetilde{B}_{s}\right)\right|ds\leqslant C_{\sigma}\int_{0}^{t}d_{M}\left(B_{s},\widetilde{B}_{s}\right)ds\nonumber \\
\leqslant C_{\sigma}d_{M}\left(p,\widetilde{p}\right)\int_{0}^{t}e^{-Ks/2}ds=C_{\sigma}\left(\frac{1-e^{-Kt/2}}{K/2}\right)d_{M}\left(p,\widetilde{p}\right),\label{eq:E-Estimate1}
\end{align}
where we used \eqref{Lip1} and \eqref{eq:Sturm1} . If $K=0$,
we consider the same coupling for the Brownian motions $\left(B_{t},\widetilde{B}_{t}\right)$ starting at $\left(p,\widetilde{p}\right)$ so that
\begin{equation}
d_{M}\left(B_{t},\widetilde{B}_{t}\right)\leqslant d_{M}\left(p,\widetilde{p}\right),\label{Synchronous2}
\end{equation}
for all $t\geqslant 0$. A similar computation as in \eqref{eq:E-Estimate1} gets us the estimate
\begin{equation}
d_{E}\left(Y_{t},\widetilde{Y}_{t}\right)\leqslant C_{\sigma}td_{M}\left(p,\widetilde{p}\right),\label{E-Estimate2}
\end{equation}
 from \eqref{Synchronous2}. Combining \eqref{Synchronous1} and \eqref{Synchronous2} we get
\begin{equation}
d_{M}\left(B_{t},\widetilde{B}_{t}\right)\leqslant K_{1}(t)d_{M}\left(p,\widetilde{p}\right),\label{Synchronous3}
\end{equation}
while combining \eqref{eq:E-Estimate1} and \eqref{E-Estimate2} we have
\begin{equation}
d_{E}\left(Y_{t},\widetilde{Y}_{t}\right)\leqslant K_{2}(t)d_{M}\left(p,\widetilde{p}\right),\label{eq:GenEsti2}
\end{equation}
for all $t\geqslant0$, where all of these inequalities hold $\mathbb{P}^{\left(\bf{x},\widetilde{\bf{x}}\right)}-$almost surely. By Lemma $\ref{prop:Taylor}$, there exists a $C_{f}\geqslant 1$ depending on a bound on the Hessian of $f\in C^{2}\left(M\times\mathbb{R}^{k}\right)$ such that
\begin{align}
\left|f\left(B_t,Y_t\right)-f\left(\widetilde{B_t},\widetilde{Y_t}\right)\right| & \leqslant\left\Vert \nabla_{p}f\left(\widetilde{B_t},\widetilde{Y_t}\right)\right\Vert d_{M}\left(B_t,\widetilde{B_t}\right)+\left\Vert \nabla_{\xi}f\left(\widetilde{B_t},\widetilde{Y_t}\right)\right\Vert d_{E}\left(Y_t,\widetilde{Y_t}\right)\nonumber \\
 & +C_{f}\left(d_{M}\left(B_t,\widetilde{B_t}\right)+d_{E}\left(Y_t,\widetilde{Y_t}\right)\right)^{2},\label{eq:GenTaylor1}
\end{align}
for all $t\geqslant 0 $, $\mathbb{P}^{\left(\bf{x},\widetilde{\bf{x}}\right)}-$almost surely.

Using inequalities \eqref{Synchronous3}, \eqref{eq:GenEsti2}
and \eqref{eq:GenTaylor1}, we have that for $f \in C^{2}\left(M\times\mathbb{R}^{k}\right)$

\begin{align*}
 & \left|P_{t}f\left(p,\xi\right)-P_{t}f\left(\widetilde{p},\xi\right)\right|=\left|\mathbb{E}^{\left(\mathbf{x},\widetilde{\mathbf{x}}\right)}\left[f\left(B_{t},Y_{t}\right)-f\left(\widetilde{B}_{t},\widetilde{Y}_{t}\right)\right]\right|\\
 & \leqslant\mathbb{E}^{\left(\mathbf{x},\widetilde{\mathbf{x}}\right)}\left[\left\Vert \nabla_{p}f\left(\widetilde{B}_{t},\widetilde{Y}_{t}\right)\right\Vert d_{M}\left(B_{t},\widetilde{B}_{t}\right)+\left\Vert \nabla_{\xi}f\left(\widetilde{B}_{t},\widetilde{Y}_{t}\right)\right\Vert d_{E}\left(Y_{t},\widetilde{Y}_{t}\right)\right]\\
 & +C_{f}\mathbb{E}^{\left(\mathbf{x},\widetilde{\mathbf{x}}\right)}\left[d_{M}\left(B_{t},\widetilde{B}_{t}\right)+d_{E}\left(Y_{t},\widetilde{Y}_{t}\right)\right]^{2}\\
 & \leqslant\mathbb{E}^{\left(\mathbf{x},\widetilde{\mathbf{x}}\right)}\left[K_{1}(t)\left\Vert \nabla_{p}f\left(\widetilde{B}_{t},\widetilde{Y}_{t}\right)\right\Vert +K_{2}(t)\left\Vert \nabla_{\xi}f\left(\widetilde{B}_{t},\widetilde{Y}_{t}\right)\right\Vert \right]d_{M}\left(p,\widetilde{p}\right)\\
 & +C_{f}\left(K_1(t)+K_2(t)\right)^{2}d_{M}\left(p,\widetilde{p}\right)^{2}.
\end{align*}
Using Jensen's inequality for $q\geqslant1$ we have
\begin{align*}
 & \left|P_{t}f\left(p,\xi\right)-P_{t}f\left(\widetilde{p},\xi\right)\right|\\
 & \leqslant\left(\mathbb{E}^{\left(\mathbf{x},\widetilde{\mathbf{x}}\right)}\left[\left(K_{1}(t)\left\Vert \nabla_{p}f\left(\widetilde{B}_{t},\widetilde{Y}_{t}\right)\right\Vert +K_{2}(t)\left\Vert \nabla_{\xi}f\left(\widetilde{B}_{t},\widetilde{Y}_{t}\right)\right\Vert \right)^{q}\right]\right)^{\frac{1}{q}}d_{M}\left(p,\widetilde{p}\right)\\
 & +C_{f}\left(K_1(t)+K_2(t)\right)^{2}d_{M}\left(p,\widetilde{p}\right)^{2}.
\end{align*}
 Dividing the last inequality out by $d_{M}\left(p,\widetilde{p}\right)$
we have that
\begin{align*}
\frac{\left|P_{t}f\left(p,\xi\right)-P_{t}f\left(\widetilde{p},\xi\right)\right|}{d_{M}\left(p,\widetilde{p}\right)} & \leqslant\left[P_{t}\left(\left(K_{1}(t)\left\Vert \nabla_{p}f\right\Vert +K_{2}(t)\left\Vert \nabla_{\xi}f\right\Vert \right)^{q}\right)\left(\widetilde{p},\xi\right)\right]^{\frac{1}{q}}\\
 & +C_{f}\left(K_1(t)+K_2(t)\right)^{2}d_{M}\left(p,\widetilde{p}\right).
\end{align*}
 Since
\[
\lim_{r\to0}\sup_{\widetilde{p}:0<d_{M}\left(p,\widetilde{p}\right)\leqslant r}\frac{\left|P_{t}f\left(p,\xi\right)-P_{t}f\left(\widetilde{p},\xi\right)\right|}{d_{M}\left(p,\widetilde{p}\right)}=\left\Vert \nabla_{p}P_{t}f\left(p,\xi\right)\right\Vert
\]
by Proposition \ref{Prop:Limsup}, we have the desired result.
\end{proof}

\begin{remark}
The constants obtained in Theorem \ref{GenGradient} using the coupling technique are sharper than the constants in Theorem \ref{GenGrad-Gamma} using $\Gamma$-calculus. The trade off here being that the $\Gamma$-calculus approach allows for the result to be proven for a wider class of Kolmogorov type diffusions.
\end{remark}

\begin{remark}
We note that when applying the triangle inequality to the right hand sides of the inequalities in Propositions \ref{Kolmogorov1},  \ref{EuclideanCouplingBakryEmery}, we recover Theorem \ref{GenGradient} when the manifold $M=\mathbb{R}^d$. Here we have $k=d$, $\sigma(\bf{x})= \bf{x}$ and $C_\sigma=1$.

\end{remark}

\begin{example}[Velocity spherical Brownian motion]
The velocity spherical Brownian is a diffusion process which takes values in $T^1\mathcal{M}$, the unit tangent bundle of a Riemannian manifold of finite volume. The generator is of the form
$$L=\frac{\sigma^2}{2}\Delta_v+\kappa\xi.$$
It was introduced in \cite{AngstBailleulTardif2015} and further studied in \cite{BaudoinTardif2018}. When $\mathcal{M}=\mathbb{R}^{d+1}$ and $\sigma=\kappa=1$ the diffusion is of the form $\mathbf{X}_{t}=(B_t,\int_0^tB_s ds)$ where $B_t$ is a Brownian motion on the $d$-dimensional sphere $\mathbb{S}^d$. Here we take $\mathbb{S}^d$ to have the usual embedding in $\mathbb{R}^{d+1}$, that is, $\mathbb{S}^{d}=\left\{ \mathbf{x}\in\mathbb{R}^{d+1}\mid\left|\mathbf{x}\right|=1\right\} $.
Let $d_{\mathbb{S}^{d}}$ be the spherical distance and $d_{E}\left(\mathbf{x},\mathbf{y}\right)=\left|\mathbf{x}-\mathbf{y}\right|$
is the Euclidean distance in $\mathbb{R}^{d+1}$. The explicit spherical
distance is given by
\[
d_{\mathbb{S}^{d}}\left(\mathbf{x},\mathbf{y}\right)=\cos^{-1}\left(\mathbf{x}\cdot\mathbf{y}\right),
\]
for $\mathbf{x},\mathbf{y}\in\mathbb{S}^{d}$, where the standard Euclidean inner product is used. It is easy to see that
\begin{equation}
d_{E}\left(\mathbf{x},\mathbf{y}\right)\leqslant d_{\mathbb{S}^{d}}\left(\mathbf{x},\mathbf{y}\right),\label{EucLeqSphr}
\end{equation}
for all $\mathbf{x},\mathbf{y}\in\mathbb{S}^{d}$ since the Riemannian structure of $\mathbb{S}^d$ is induced by the Euclidean structure of the ambient space $\mathbb{R}^{d+1}$.
Inequality \eqref{EucLeqSphr} shows that $\sigma:\mathbb{S}^d\to \mathbb{R}^{d+1}$ is a $C_{\sigma}=1$-Lipschtiz map. Thus we can apply Theorem \ref{GenGradient} to the manifold $M=\mathbb{S}^d$,  since $Ric=(d-1)g$ where $g$ is the Riemannian metric.

\end{example}

\begin{example}
Let $k=1$ and fix a $p_{0}\in M$. We consider the map $\sigma:M\to\mathbb{R}$
defined by
\[
\sigma(p)=d_{M}(p,p_{0}).
\]
Note that this map is globally $1$-Lipschitz since
\[
\left|\sigma(p)-\sigma(\tilde{p})\right| =\left|d_{M}(p,p_{0}))-d_{M}(\tilde{p},p_{0})\right|  \leqslant d_{M}\left(p,\tilde{p}\right),
\]
for all $p,\tilde{p}\in M$. Thus we can always apply Theorem \ref{GenGradient} to
the process
\[
\mathbf{X}_{t}=\left(B_{t},\int_{0}^{t}d_{M}\left(B_{s},p_0 \right)ds\right),
\]
where $B_{t}$ is Brownian motion on $M$.

\end{example}

\subsection{Iterated Kolmogorov diffusions}\label{5.3}

Our technique can also be applied in studying \emph{iterated Kolmogorov
diffusions}. These processes have been studied recently by S.~Banerjee and W.~Kendall in \cite{BanerjeeKendall2016a} and K.~Habermann in \cite{Habermann2018arxiv} as they provide a natural class of diffusions satisfying a weak H\"{o}rmander condition.

An iterated Kolmogorov diffusion is of the form $\mathbf{X}_{t}=\left(B_t, I_{1}(t), \dots, I_{n}(t)\right)$, where
\begin{align*}
I_{0}(t) & =\sigma\left(B_{t}\right),\\
I_{r}(t) & =\int_{0}^{t}I_{r-1}(s)ds,\,\,\,\text{ for }r=1,\dots,n,
\end{align*}
where $B_{t}$ is a Brownian motion on a manifold $M$ and $\sigma:M\to\mathbb{R}^k$
is $C_{\sigma}-$Lipschtiz. Let $P_{t}$ be the heat semigroup corresponding
to the diffusion $$\mathbf{X}_{t}=\left(B_t,I_{1}(t), \dots, I_{n}(t)\right).$$
Using an argument similar to the proof of Theorem \ref{GenGradient}, we get the following result.
\begin{theorem}
Let $M$ be a complete connected Riemannian manifold such that $\operatorname{Ric}(M)\geqslant K$
for some $K\in\mathbb{R}$. When $K=0$ and $f\in C_{0}^{\infty}\left(M\times\mathbb{R}^{k}\times\cdots\times\mathbb{R}^{k}\right)$ with
$f\left(p,\xi_{1},\dots,\xi_{n}\right),p\in M,\xi_{1},\dots,\xi_{n}\in\mathbb{R}^{k}$
we have the following gradient bound for the iterated Kolmogorov diffusion
semigroup $P_{t}$,
\[
\left\Vert \nabla_{p}P_{t}f\right\Vert ^{q}\leqslant P_{t}\left(\left(\left\Vert \nabla_{p}f\right\Vert +C_{\sigma}t\left\Vert \nabla_{\xi_{1}}f\right\Vert +\cdots+C_{\sigma}\frac{t^{n}}{n!}\left\Vert \nabla_{\xi_{n}}f\right\Vert \right)^{q}\right),
\]
for $q\geqslant1$. When $K\neq0$, we have
\[
\left\Vert \nabla_{p}P_{t}f\right\Vert ^{q}\leqslant P_{t}\left(\left(\left\Vert \nabla_{p}f\right\Vert +K_{1}(t)\left\Vert \nabla_{\xi_{1}}f\right\Vert +\cdots+K_{n}(t)\left\Vert \nabla_{\xi_{n}}f\right\Vert \right)^{q}\right),
\]
for $q\geqslant1$, where
\begin{align*}
K_{1}(t) & =C_{\sigma}\frac{1-e^{-Kt/2}}{K/2},\\
K_{r}(t) & =\int_{0}^{t}K_{r-1}(s)ds,\,\,\,\text{ for }r=2,\dots,n.
\end{align*}
\end{theorem}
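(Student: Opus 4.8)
The plan is to follow, with straightforward modifications, the coupling argument in the proof of Theorem \ref{GenGradient}. Fix $\mathbf{x}=(p,\xi_{1},\dots,\xi_{n})$ and $\widetilde{\mathbf{x}}=(\widetilde p,\xi_{1},\dots,\xi_{n})$, differing only in the base point. First I would take the coupling by parallel transport of Brownian motions $(B_{t},\widetilde B_{t})$ on $M$ starting at $(p,\widetilde p)$ that satisfies \eqref{eq:Sturm1}, and use this Brownian coupling to drive a coupling $\mathbb{P}^{(\mathbf{x},\widetilde{\mathbf{x}})}$ of the two iterated Kolmogorov diffusions $\mathbf{X}_{t}=(B_{t},I_{1}(t),\dots,I_{n}(t))$ and $\widetilde{\mathbf{X}}_{t}=(\widetilde B_{t},\widetilde I_{1}(t),\dots,\widetilde I_{n}(t))$ obtained by feeding $B_{t}$, respectively $\widetilde B_{t}$, into the defining iterated integrals with the common initial data $\xi_{1},\dots,\xi_{n}$. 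Since $\operatorname{Ric}(M)\geqslant K$, Brownian motion on $M$ is non-explosive and all the iterated time integrals are finite, so $P_{t}$ is a well-defined Markov semigroup and $P_{t}f(\widetilde{\mathbf{x}})=\mathbb{E}^{(\mathbf{x},\widetilde{\mathbf{x}})}[f(\widetilde B_{t},\widetilde I_{1}(t),\dots,\widetilde I_{n}(t))]$.

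Next I would establish the pathwise distance bounds. From \eqref{eq:Sturm1} one has $d_{M}(B_{t},\widetilde B_{t})\leqslant e^{-Kt/2}d_{M}(p,\widetilde p)$ (equal to $d_{M}(p,\widetilde p)$ when $K=0$), hence by the Lipschitz hypothesis \eqref{Lip1}, $|I_{0}(t)-\widetilde I_{0}(t)|=|\sigma(B_{t})-\sigma(\widetilde B_{t})|\leqslant C_{\sigma}d_{M}(B_{t},\widetilde B_{t})$. Then an induction on $r$ using $|I_{r}(t)-\widetilde I_{r}(t)|\leqslant\int_{0}^{t}|I_{r-1}(s)-\widetilde I_{r-1}(s)|\,ds$ gives $|I_{r}(t)-\widetilde I_{r}(t)|\leqslant K_{r}(t)\,d_{M}(p,\widetilde p)$ for $r=1,\dots,n$, $\mathbb{P}^{(\mathbf{x},\widetilde{\mathbf{x}})}$-almost surely, with $K_{1}(t)=C_{\sigma}\int_{0}^{t}e^{-Ks/2}\,ds$ and $K_{r}(t)=\int_{0}^{t}K_{r-1}(s)\,ds$; when $K=0$ these integrals evaluate explicitly to $K_{r}(t)=C_{\sigma}t^{r}/r!$, which matches the two stated forms.

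Then I would prove the $n$-fold analogue of Lemma \ref{prop:Taylor}: joining $(\widetilde p,\widetilde\xi_{1},\dots,\widetilde\xi_{n})$ to $(p,\xi_{1},\dots,\xi_{n})$ by a unit-speed geodesic in $M$ (extended slightly past its endpoints) followed by straight-line segments in each $\mathbb{R}^{k}$ factor, and applying a second-order Taylor estimate whose remainder is controlled by the Hessian of $f$ (which is bounded since $f\in C_{0}^{\infty}$), one obtains a constant $C_{f}\geqslant 1$ with
\[
\bigl|f(p,\xi_{1},\dots,\xi_{n})-f(\widetilde p,\widetilde\xi_{1},\dots,\widetilde\xi_{n})\bigr|\leqslant\|\nabla_{p}f\|\,d_{M}(p,\widetilde p)+\sum_{r=1}^{n}\|\nabla_{\xi_{r}}f\|\,d_{E}(\xi_{r},\widetilde\xi_{r})+C_{f}\Bigl(d_{M}(p,\widetilde p)+\sum_{r=1}^{n}d_{E}(\xi_{r},\widetilde\xi_{r})\Bigr)^{2},
\]
the gradients and norms on the right being evaluated at $(\widetilde p,\widetilde\xi_{1},\dots,\widetilde\xi_{n})$.

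Finally I would apply this inequality along the coupled trajectories, insert the pathwise bounds from the second step, take $\mathbb{E}^{(\mathbf{x},\widetilde{\mathbf{x}})}$ so that the marginal expectations become $P_{t}(\cdot)(\widetilde p,\xi_{1},\dots,\xi_{n})$, apply Jensen's inequality for $q\geqslant 1$, divide by $d_{M}(p,\widetilde p)$, and let $\widetilde p\to p$: the quadratic remainder, being $O(d_{M}(p,\widetilde p)^{2})$, drops out, while the left-hand side converges to $\|\nabla_{p}P_{t}f(p,\xi_{1},\dots,\xi_{n})\|$ by Proposition \ref{Prop:Limsup} (applied to the function $p\mapsto P_{t}f(p,\xi_{1},\dots,\xi_{n})$, which is smooth in $p$ by hypoellipticity), yielding the stated bound. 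The \textbf{main obstacle} is purely bookkeeping: verifying the recursion $|I_{r}(t)-\widetilde I_{r}(t)|\leqslant K_{r}(t)\,d_{M}(p,\widetilde p)$ and the $n$-variable Taylor lemma with the correct constants, and checking that every cross-term in the quadratic remainder is genuinely $O(d_{M}(p,\widetilde p)^{2})$ so that it vanishes after dividing and passing to the limit.
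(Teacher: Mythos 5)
Your proposal is correct and is essentially the paper's own proof: the paper gives no separate argument for this theorem, stating only that it follows by "an argument similar to the proof of Theorem \ref{GenGradient}", and that is exactly what you have carried out (coupling by parallel transport driving both iterated diffusions, the inductive pathwise bound $|I_{r}(t)-\widetilde I_{r}(t)|\leqslant K_{r}(t)\,d_{M}(p,\widetilde p)$, an $n$-fold analogue of Lemma \ref{prop:Taylor}, Jensen's inequality, and the limit via Proposition \ref{Prop:Limsup}). One small caveat: your argument naturally produces the coefficient $e^{-Kt/2}$ in front of $\left\Vert \nabla_{p}f\right\Vert$ (as in Theorem \ref{GenGradient}), which yields the printed coefficient $1$ only when $K\geqslant 0$; for $K<0$ the statement with coefficient $1$ does not follow from this coupling argument and appears to be a slip in the theorem as stated rather than a gap in your proof.
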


\subsection{Heisenberg group}\label{5.4}

The Heisenberg group is the simplest nontrivial example of a sub-Riemannian manifold. The 3-dimensional Heisenberg group is $\mathbb{G}=\mathbb{R}^{3}$ with the group law defined by
\[
\left(x_{1},y_{1},z_{1}\right)\star\left(x_{2},y_{2},z_{2}\right):=\left(x_{1}+x_{2},y_{1}+y_{2},z_{1}+z_{2}+\frac{1}{2}\left(x_{1}y_{2}-x_{2}y_{1}\right)\right).
\]
The identity element is $e=\left(0,0,0\right)$ with the inverse given by $\left(x,y,z\right)^{-1}=\left(-x,-y,-z\right)$. We define the following left-invariant vector fields by
\begin{align*}
\mathcal{X} & :=\partial_{x}-\frac{y}{2}\partial_{z},\\
\mathcal{Y} & :=\partial_{y}-\frac{x}{2}\partial_{z},\\
\mathcal{Z} & :=\partial_{z}.
\end{align*}
The horizontal distribution is defined by $\mathcal{H}=\operatorname{span}\left\{ \mathcal{X},\mathcal{Y}\right\}$,
fiberwise. Vectors in $\mathcal{H}$ are said to be \emph{horizontal}.
We endow $\mathbb{G}$ with the sub-Riemannian metric $g\left(\cdot,\cdot\right)$
so that $\left\{ \mathcal{X},\mathcal{Y}\right\} $ forms an orthogonal
frame for the horizontal distribution $\mathcal{H}$. With this metric
we can define norms on vectors by $\left\Vert v\right\Vert =\left(g_{p}\left(v,v\right)\right)^{\frac{1}{2}}$
for $v\in\mathcal{H}_{p},p\in\mathbb{G}$. The Lebesgue measure on $\mathbb{R}^{3}$ is a Haar measure on the Heisenberg group. The distance associated to $\mathcal{H}$ is the Carnot-Carath\'{e}odory distance $d_{CC}$. The \emph{horizontal gradient }$\nabla_{\mathcal{H}}$ is a horizontal
vector field such that for any smooth $f:\mathbb{G}\rightarrow R$ we have that for all $X \in \mathcal{H}$
\[
g\left(\nabla_{\mathcal{H}}f,X\right)=X\left(f\right).
\]
The operator
\[
\Delta_{\mathcal{H}}=\frac{1}{2}\left(\mathcal{X}^{2}+\mathcal{Y}^{2}\right)
\]
is a natural sub-Laplacian for the Heisenberg as pointed out in \cite{AgrachevBoscainGauthierRossi2009, GordinaLaetsch2017} and also in \cite[Example 6.1]{GordinaLaetsch2016a}. Brownian motion on the Heisenberg group is defined to be the diffusion process $\left\{ B_{t}^{p}\right\} _{t \geqslant 0}$ starting at  $p=\left(x, y, z\right)\in\mathbb{R}^{3}$ whose infinitesimal generator is $\Delta_{\mathcal{H}}$. Explicitly the process is given by
\[
B_{t}^{p}=\left(B_{1}(t),B_{2}(t),z+\frac{1}{2}\int_{0}^{t}B_{1}(s)dB_{2}(s)-\frac{1}{2}\int_{0}^{t}B_{2}(s)dB_{1}(s)\right),
\]
where $\left(B_{1},B_{2}\right)$ is a Brownian motion starting at $\left(x,y\right)$.

Gradient bounds of Bakry-\'Emery type were studied for the Heisenberg group in \cite{BakryBaudoinBonnefontChafai2008, LiHong-Quan2006, DriverMelcher2005, Eldredge2010}. In particular, the $L^{1}$-gradient bounds for the heat semigroup have been proven first in \cite{LiHong-Quan2006} and also in \cite{BakryBaudoinBonnefontChafai2008}. As pointed out in \cite{Kuwada2010a}, Kuwada's duality between $L^{1}$-gradient bounds and $L^{\infty}$-Wasserstein  control shows that for each $t>0$, and $p, \widetilde{p} \in\mathbb{G}$, there exists a  coupling $\left(B_{t}^{p}, \widetilde{B}_{t}^{p}\right)$ of Brownian motions on the Heisenberg group such that
\begin{equation}\label{Heisenberg1}
d_{CC}\left(B_{t}^{p}, \widetilde{B}_{t}^{p}\right)\leqslant Kd_{CC}\left(p,\widetilde{p}\right),
\end{equation}
almost surely for some constant $K\geqslant1$ that does not depend on $p, \widetilde{p}, t$. We remark that in \cite{BonnefontJuillet2018}, the authors show that any coupling that satisfies \eqref{Heisenberg1} on $\mathbb{G}$ must be non-Markovian. This further highlights the need for more non-Markovian coupling techniques as in \cite{BanerjeeKendall2016a, BanerjeeGordinaMariano2018}.

Consider the Kolmogorov diffusion $\mathbf{X}_{t}=\left(B_{t}^{p},\xi+\int_{0}^{t}\sigma(B_{s}^{p})ds\right)$
on $\mathbb{G}\times\mathbb{R}^{3}$, where $\sigma:\mathbb{G}\to \mathbb{R}^3$ satisfies \eqref{Lip1} and let $P_{t}$ be the heat semigroup associated with $X_{t}$. Using a similar argument as in Lemma \ref{prop:Taylor} with the sub-Riemannian metric $g$ and the horizontal gradient $\nabla_{\mathcal{H}}$, we can get an estimate
\begin{align}
\left|f\left(p,\xi\right)-f\left(\widetilde{p},\xi\right)\right| & \leqslant\left\Vert\nabla_{\mathcal{H}}f(\widetilde{p},\xi)\right\Vert d_{CC}\left(p,\widetilde{p}\right)+\left\Vert\nabla_{\xi}f(\widetilde{p},\xi)\right\Vert d_{E}\left(\xi,\widetilde{\xi}\right)
\notag
\\
 & +C_{f}\left(d_{CC}\left(p,\widetilde{p}\right)+d_{E}\left(\xi,\widetilde{\xi}\right)\right)^2,
 \label{Heisenberg2}
\end{align}
for functions $f\in C_{0}^{\infty}\left(\mathbb{G}\times\mathbb{R}^{3}\right)$,
where $C_{f}\geqslant0$. The argument in Theorem \ref{GenGradient}
can be used to prove gradient bounds for $P_t$ when $B_t^p$ is a Brownian motion on a sub-Riemannian
manifold once we have a synchronous coupling and an estimate similar to \eqref{Heisenberg2}. Thus using \eqref{Heisenberg1} and \eqref{Heisenberg2} for the Heisenberg group we obtain the following result.
\begin{theorem}\label{HGrad}
For all $q\geqslant 1$ and $f\in C_{0}^{\infty}\left(\mathbb{G}\times\mathbb{R}^{3}\right)$,
\begin{equation}\label{HeisenbergGradientBound}
\left\Vert \nabla_{\mathcal{H}}P_{t}f\right\Vert ^{q}\leqslant K^q P_{t}\left(\left(\left\Vert \nabla_{\mathcal{H}}f\right\Vert +C_\sigma t\left\Vert \nabla_{\xi}f\right\Vert \right)^{q}\right).
\end{equation}
\end{theorem}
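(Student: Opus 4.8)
The plan is to follow the scheme of the proof of Theorem \ref{GenGradient}, with the Carnot--Carath\'eodory distance $d_{CC}$ replacing the Riemannian distance $d_M$, the horizontal gradient $\nabla_{\mathcal{H}}$ replacing $\nabla_p$, and the coupling \eqref{Heisenberg1} of Brownian motions on $\mathbb{G}$ replacing the coupling by parallel transport \eqref{eq:Sturm1}. Fix $t>0$ and points $\mathbf{x}=(p,\xi)$, $\widetilde{\mathbf{x}}=(\widetilde{p},\xi)$ in $\mathbb{G}\times\mathbb{R}^{3}$. Let $(B_t,\widetilde{B}_t)$ be a coupling of Brownian motions on $\mathbb{G}$ started at $(p,\widetilde{p})$ and satisfying \eqref{Heisenberg1}; it induces a coupling $\mathbb{P}^{(\mathbf{x},\widetilde{\mathbf{x}})}$ of the Kolmogorov diffusions $\mathbf{X}_t=(B_t,Y_t)$ and $\widetilde{\mathbf{X}}_t=(\widetilde{B}_t,\widetilde{Y}_t)$, where $Y_t=\xi+\int_0^t\sigma(B_s)\,ds$ and $\widetilde{Y}_t=\xi+\int_0^t\sigma(\widetilde{B}_s)\,ds$.

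From \eqref{Heisenberg1} one has $d_{CC}(B_t,\widetilde{B}_t)\leqslant K\,d_{CC}(p,\widetilde{p})$ almost surely, and combining this with \eqref{Lip1},
\[
d_E\left(Y_t,\widetilde{Y}_t\right)\leqslant\int_0^t\left|\sigma(B_s)-\sigma(\widetilde{B}_s)\right|ds\leqslant C_\sigma\int_0^t d_{CC}\left(B_s,\widetilde{B}_s\right)ds\leqslant C_\sigma t\,K\,d_{CC}(p,\widetilde{p})
\]
almost surely. Feeding these two bounds into the Taylor-type estimate \eqref{Heisenberg2} (the sub-Riemannian version of Lemma \ref{prop:Taylor}) applied to $f(B_t,Y_t)-f(\widetilde{B}_t,\widetilde{Y}_t)$, and using that $C_f<\infty$ since $f\in C_0^\infty$, gives almost surely
\[
\left|f(B_t,Y_t)-f(\widetilde{B}_t,\widetilde{Y}_t)\right|\leqslant K\left(\left\Vert\nabla_{\mathcal{H}}f(\widetilde{B}_t,\widetilde{Y}_t)\right\Vert+C_\sigma t\left\Vert\nabla_\xi f(\widetilde{B}_t,\widetilde{Y}_t)\right\Vert\right)d_{CC}(p,\widetilde{p})+C_fK^2(1+C_\sigma t)^2\,d_{CC}(p,\widetilde{p})^2.
\]
I would then take $\mathbb{E}^{(\mathbf{x},\widetilde{\mathbf{x}})}$, bound $|P_tf(p,\xi)-P_tf(\widetilde{p},\xi)|$ by the expectation of the left side, apply Jensen's inequality with exponent $q\geqslant1$, and use $\mathbb{E}^{(\mathbf{x},\widetilde{\mathbf{x}})}[h(\widetilde{B}_t,\widetilde{Y}_t)]=P_th(\widetilde{p},\xi)$ to get
\[
\frac{\left|P_tf(p,\xi)-P_tf(\widetilde{p},\xi)\right|}{d_{CC}(p,\widetilde{p})}\leqslant K\left[P_t\left(\left(\left\Vert\nabla_{\mathcal{H}}f\right\Vert+C_\sigma t\left\Vert\nabla_\xi f\right\Vert\right)^q\right)(\widetilde{p},\xi)\right]^{1/q}+C_fK^2(1+C_\sigma t)^2\,d_{CC}(p,\widetilde{p}).
\]

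To conclude I would let $\widetilde{p}\to p$: the remainder vanishes, the bracketed term is continuous in $\widetilde{p}$ (since $f\in C_0^\infty$ and $P_t$ has a smooth kernel), and the left side converges to $\left\Vert\nabla_{\mathcal{H}}P_tf(p,\xi)\right\Vert$ provided one has the sub-Riemannian analogue of Proposition \ref{Prop:Limsup}: for every $g\in C^1(\mathbb{G})$,
\[
\lim_{r\to0}\ \sup_{\widetilde{p}:\,0<d_{CC}(p,\widetilde{p})\leqslant r}\ \frac{\left|g(p)-g(\widetilde{p})\right|}{d_{CC}(p,\widetilde{p})}=\left\Vert\nabla_{\mathcal{H}}g(p)\right\Vert.
\]
The upper bound follows by integrating $dg$ along a horizontal curve from $\widetilde{p}$ to $p$ of length within $\varepsilon$ of $d_{CC}(p,\widetilde{p})$, using $dg(X)=g(\nabla_{\mathcal{H}}g,X)$ on horizontal $X$ and Cauchy--Schwarz; the lower bound follows by moving along a horizontal curve issuing from $p$ with initial velocity $\nabla_{\mathcal{H}}g(p)/\left\Vert\nabla_{\mathcal{H}}g(p)\right\Vert$ and the fact that $d_{CC}(p,\gamma(s))/s\to1$ as $s\to0$ (displacements with a vertical component are irrelevant, since they produce difference quotients of order $\sqrt{r}$). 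Applying this identity with $g=P_tf(\cdot,\xi)$ and raising to the $q$-th power yields \eqref{HeisenbergGradientBound}. I expect this limit characterization of $\left\Vert\nabla_{\mathcal{H}}g\right\Vert$ --- essentially a statement about the local geometry of Carnot--Carath\'eodory balls --- to be the only real obstacle; the remainder is a transcription of the proof of Theorem \ref{GenGradient}.
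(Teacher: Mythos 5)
Your proposal is correct and follows essentially the same route as the paper: the paper proves Theorem \ref{HGrad} precisely by transplanting the coupling argument of Theorem \ref{GenGradient}, replacing the parallel-transport coupling by the coupling satisfying \eqref{Heisenberg1} and Lemma \ref{prop:Taylor} by the sub-Riemannian Taylor estimate \eqref{Heisenberg2}, exactly as you do. The only addition on your part is spelling out the sub-Riemannian analogue of Proposition \ref{Prop:Limsup} identifying the limsup of $d_{CC}$-difference quotients with $\left\Vert \nabla_{\mathcal{H}}\cdot\right\Vert$, a step the paper uses implicitly, and your sketch of it is sound.
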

The best constant $K$ in \eqref{Heisenberg1}  and \eqref{HeisenbergGradientBound} is not known. The best known estimate for $K$ as of this writing is $K\geqslant \sqrt{2}$ (see \cite[Proposition 2.7]{DriverMelcher2005}). In fact the best constant $K$ is conjectured to be $\sqrt{2}$ in \cite[Remark 3.2]{BakryBaudoinBonnefontChafai2008}.

\begin{example}
Consider for $p=\left(x,y,z\right)\in\mathbb{G}$ the map $\sigma:\mathbb{G}\to\mathbb{R}^{3}$
defined by $\sigma\left(p\right)=\left(x,y,0\right)$ and the diffusion
$\mathbf{X}_{t}=\left(B_{t}^{p},\xi+\int_{0}^{t}\sigma\left(B_{s}^{p}\right)ds\right)$.
A straightforward computation shows that
\[
\sqrt{x^{2}+y^{2}}\leqslant d_{CC}\left(e,p\right),
\]
so that by the left-invariance of $d_{CC}$ we have that $\sigma$ is $1$-Lipschitz
in the sense of \eqref{Lip1}. Thus Theorem \ref{HGrad}
can be applied to $\mathbf{X}_{t}$.
\end{example}

\begin{acknowledgement}
The authors would like to thank Sayan Banerjee,  Bruce Driver and Tai Melcher for helpful discussions and insights. We would also like to thank two anonymous referees for their careful review of the paper and whose suggestions greatly improved the present paper.
\end{acknowledgement}

\bibliographystyle{amsplain}
\bibliography{KolmogorovDiffusions}

\end{document}